\renewcommand\L{\mathcal L}
\newcommand\n{\mathbf n}
\newcommand\R{\mathbb R}
\let\div\relax
\DeclareMathOperator\div{div}
\DeclareMathOperator\grad{grad}
\DeclareMathOperator\tr{tr}
\newcommand{\abs}[1]{\left\lvert{#1}\right\rvert}
\newcommand\dd[2][]{\frac{d{#1}}{d{#2}}}
\newcommand\ddtwo[2][]{\frac{d^2{#1}}{d{#2}^2}}
\newtheorem{theorem}{Theorem}[section]
\newtheorem{proposition}[theorem]{Proposition}
\theoremstyle{definition}
\newtheorem{definition}[theorem]{Definition}
\newtheorem{notation}[theorem]{Notation}
\author{Yakov Berchenko-Kogan}
\title[Bounds on the Index of Self-Shrinking Tori]{Bounds on the Index of Rotationally Symmetric Self-Shrinking Tori}
\keywords{mean curvature flow, self-shrinkers, Angenent torus}
\subjclass[2020]{53E10}
\begin{document}

\begin{abstract}
  A closed surface evolving under mean curvature flow becomes singular in finite time. Near the singularity, the surface resembles a self-shrinker, a surface that shrinks by dilations under mean curvature flow. If the singularity is modeled on a self-shrinker other than a round sphere or cylinder, then the singularity is unstable under perturbations of the flow. One can quantify this instability using the index of the self-shrinker when viewed as a critical point of the entropy functional.

  In this work, we prove an upper bound on the index of rotationally symmetric self-shrinking tori in terms of their entropy and their maximum and minimum radii. While there have been a few lower bound results in the literature, we believe that this result is the first upper bound on the index of a self-shrinker. Our methods also give lower bounds on the index and the entropy, and our methods give simple formulas for two entropy-decreasing variations whose existence was proved by Liu. Surprisingly, the eigenvalue corresponding to these variations is exactly $-1$. Finally, we present some preliminary results in higher dimensions and six potential directions for future work.
\end{abstract}

\maketitle

\section{Introduction}
Mean curvature flow is a well-studied geometric flow under which a hypersurface $\Sigma\subset\mathbb R^{n+1}$ evolves in such a way as to decrease its area as fast as possible. Under mean curvature flow, each point on the surface moves in the inward normal direction with velocity equal to the mean curvature of the surface at that point. Mean curvature flow has applications to image denoising, and the rich features of mean curvature flow provide a good testing ground for studying geometric flows and nonlinear parabolic partial differential equations more generally.

Some surfaces, such as spheres and cylinders, evolve under mean curvature by dilations. These surfaces, known as \emph{self-shrinkers}, will shrink until they disappear in finite time. Self-shrinkers are particularly important in the study of mean curvature flow because they model the singularities that develop as a surface evolves under mean curvature flow: As the flow approaches a singularity, the surface will locally resemble a self-shrinker. For example, if the initial surface is convex, then the surface will become rounder and rounder as it shrinks to a point, resembling a sphere right before it disappears. Meanwhile, if the initial surface is shaped like a dumbbell, with two large lobes connected by a thin tube, then the thin tube will collapse, splitting the surface into two. Right before the singular time, the surface will locally resemble a cylinder near the singular point.

The world of self-shrinkers is rich and varied. In addition to the classical examples of the sphere, the cylinder, and the plane, there is a self-shrinking torus called the Angenent torus \cite{a92}. More recently, many other examples of self-shrinkers have been found, both compact and noncompact, with or without rotational symmetry, embedded or only immersed, and with low or high genus \cite{d15,dk17,dln18,kkm18,km14,m15,mo11,n09,n10,n14}. In this paper, we focus on rotationally symmetric immersed tori. Conjecturally, the Angenent torus is the only such torus that is embedded \cite{km14}. However, there are infinitely many other examples of rotationally symmetric self-shrinking tori that are only immersed \cite{dk17}.

If we have a mean curvature flow with a singularity modeled on a particular self-shrinker, one can ask if we will get the same kind of singularity if we perturb the initial surface. Colding and Minicozzi \cite{cm12} show that if the self-shrinker is not a round sphere or cylinder, then the singularity is unstable: there is a small variation of the initial surface that changes the type of singularity. For these unstable self-shrinkers, the next question to ask is: How unstable are they? Quantitatively, what is the maximum dimension of a space of variations of the self-shrinker, all of which change the type of singularity that will appear under mean curvature flow?

As we discuss in greater detail in the preliminaries section, there is a Morse-theoretic approach to this question. Huisken \cite{h90} defined a weighted area functional called the \emph{$F$-functional}, and Colding and Minicozzi \cite{cm12} defined a related concept called the \emph{entropy} that accounts for the translational and dilational symmetries of the flow. A surface is a self-shrinker if it is a critical point of the entropy functional, and the \emph{entropy index} or simply \emph{index} of the self-shrinker is the maximum dimension of a space of variations of the self-shrinker that decrease the entropy.

Note that some authors use a different convention for the index. The main point of contention is whether to include the translations and dilations, which decrease the $F$-functional but do not decrease the entropy. In the case of compact surfaces in $\R^3$, these two notions of index simply differ by $4$, accounting for dilation and the three translations. In the formulas in this paper, we do \emph{not} include the translations and dilations in the index; with this convention, the round sphere has index zero.

In addition to Colding and Minicozzi's result \cite{cm12} that round spheres, round cylinders, and planes are the only complete embedded self-shrinkers with index zero, there have been several other index results. We rely heavily on the work of Liu \cite{l16}, who showed that, apart from round spheres, round cylinders, and planes, any rotationally symmetric self-shrinker has index at least three. There are also results that give lower bounds for the index of self-shrinkers in terms of their genus \cite{ai19,irs18,mc15}.

\subsection{Main result} In this paper, we prove upper and lower index bounds for rotationally symmetric tori. To the best of our knowledge, our result is the first known upper bound for the index of an entropy-unstable self-shrinker. We give our strongest bounds in Theorem \ref{thm:finebound}, but, for the sake of simplicity, we present weaker bounds here.

\begin{theorem}\label{thm:coarsebound}
  Let $\Sigma\subset\R^3$ be an immersed rotationally symmetric self-shrinking torus. Let $F(\Sigma)$ denote the entropy of $\Sigma$, let $r_{\min}$ be the smallest distance between a point on $\Sigma$ and the axis of rotation, and let $R$ be the largest distance between a point on $\Sigma$ and the origin.

  Let $i(\Sigma)$ denote the index of $\Sigma$, with the convention that we exclude the translation and dilation variations from the count. Then
  \begin{equation*}
    \frac{3\sqrt{2e}}\pi F(\Sigma)-7<i(\Sigma)<\frac2\pi\frac{F(\Sigma)}{r_{\min}}e^{R^2/4}\left(3+\frac1{r_{\min}}+2R\right)+2R-1.
  \end{equation*}
\end{theorem}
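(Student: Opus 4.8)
The route I would take is to first establish the sharper mode-by-mode bounds of Theorem~\ref{thm:finebound} and then coarsen. The starting point is the second variation of Huisken's $F$-functional at a self-shrinker $\Sigma$: in a normal direction $u\n$ it equals $\int_\Sigma\bigl(|\nabla u|^2-(|A|^2+\tfrac12)u^2\bigr)e^{-|x|^2/4}\,dA$, the quadratic form of the operator $-L$ with $L=\L+|A|^2+\tfrac12$ and $\L=\Delta-\tfrac12\langle x,\nabla\,\cdot\,\rangle$. Thus $i(\Sigma)$ is the maximal dimension of a subspace of variations on which this form is negative definite, minus the four directions given by the three translations and the dilation. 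Writing $\Sigma$ as the surface of revolution of a closed profile curve $\gamma(s)=(r(s),z(s))$, $s\in\R/\ell\mathbb Z$ an arclength parameter, and expanding $u=\sum_k u_k(s)e^{ik\theta}$, the form block-diagonalizes: setting $w=r\,e^{-|x|^2/4}$ and $V=|A|^2+\tfrac12$, one has $i(\Sigma)+4=\sum_{k\in\mathbb Z}n_k$, where $n_k=n_{-k}$ is the maximal dimension of a subspace of functions on $\R/\ell\mathbb Z$ on which $Q_k(\phi)=\int_0^\ell\bigl((\phi')^2+k^2r^{-2}\phi^2-V\phi^2\bigr)w\,ds$ is negative.

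For the upper bound I would argue in three steps. First, a pointwise curvature bound: on a surface of revolution one principal curvature is at most $1/r_{\min}$ in absolute value, while the self-shrinker equation $H=\tfrac12\langle x,\n\rangle$ forces $|H|\le R/2$ and hence controls the geodesic curvature of the profile, so $V\le V_{\max}$ for an explicit $V_{\max}$ polynomial in $R$ and $1/r_{\min}$ --- this is the origin of the factor $3+\tfrac1{r_{\min}}+2R$. In particular $Q_k\ge0$, so $n_k=0$, as soon as $k^2\ge R^2V_{\max}$, leaving only $O(R)$ modes to consider, and a careful count of them yields the additive $2R-1$. Second, for each surviving mode I bound $n_k$ by Neumann--Dirichlet bracketing: cut $\R/\ell\mathbb Z$ into arcs so short that $Q_k$ restricted to each arc (with no boundary condition imposed) is negative on at most a one-dimensional subspace, whence $n_k$ is at most the number of arcs. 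Third, I count the arcs: a weighted Poincar\'e inequality bounds the relevant eigenvalue on each arc from below with a constant governed by $w^{-1}=r^{-1}e^{|x|^2/4}\le r_{\min}^{-1}e^{R^2/4}$, while the total length to be subdivided is controlled because $\int_0^\ell w\,ds=2F(\Sigma)$ forces $\ell\le 2F(\Sigma)\,r_{\min}^{-1}e^{R^2/4}$. Assembling these and summing over modes gives the stated upper bound.

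For the lower bound I would exhibit many entropy-decreasing variations instead. The essential point is that $V=|A|^2+\tfrac12\ge\tfrac12>0$: after conjugating the drift out of each $L_k$, the resulting potential is negative precisely on the part of $\Sigma$ near the origin, and the ``profile length'' of that region is bounded below in terms of $F(\Sigma)$, since $F(\Sigma)=\tfrac12\int_0^\ell w\,ds$ is concentrated there. A Weyl-type count of the negative eigenvalues of the $L_k$ for small $|k|$ --- with test functions built as bump functions in $s$ times $e^{ik\theta}$ --- then gives $i(\Sigma)+4\ge\tfrac{3\sqrt{2e}}\pi F(\Sigma)$ after optimizing the shapes; subtracting the four symmetry directions and the discretization loss produces the $-7$. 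The same analysis should, as a by-product, yield the explicit eigenvalue-$(-1)$ variations of Liu mentioned in the abstract.

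The step I expect to be the genuine obstacle, in both directions, is keeping the constants honest. For the upper bound the delicate feature is that the Gaussian weight confines the effective potential well of each $L_k$ to a bounded region even when $\Sigma$ is very long, whereas the curvature really does blow up as $r\to r_{\min}$; one must exploit this so that the sum over the $O(R)$ surviving modes stays linear in $R$ and the weighted Poincar\'e constant contributes exactly $r_{\min}^{-1}e^{R^2/4}$ and no worse. For the lower bound one must check that sufficiently many of the localized test functions can simultaneously be made mutually orthogonal and orthogonal to the translation and dilation eigenfunctions while keeping negative Rayleigh quotient, so that only the bounded number of dimensions recorded in the $-7$ is sacrificed.
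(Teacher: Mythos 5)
Your overall architecture (Fourier decomposition in $\theta$, then counting the negative directions of each one-dimensional quadratic form $Q_k$ and summing, minus the four symmetry directions) matches the paper's, but the way you count the negative directions diverges at the decisive step, and as written it cannot reach the stated constants. The paper's key lemma (Theorem \ref{thm:lk}, via Proposition \ref{prop:l0sigma}) is that conjugating $L_k$ by $\sigma=\tfrac12 re^{-\abs x^2/4}$ yields $\Delta_\Gamma^\sigma+\sigma^{-2}\bigl(1+\tfrac{1-k^2}{r^2}\bigr)$, where $\Delta_\Gamma^\sigma$ is the Laplacian of the cross-section viewed as a closed geodesic for the conformal metric $g^\sigma=\sigma^2(dr^2+dz^2)$, whose total length is exactly $F(\Sigma)$. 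This accomplishes two things your raw potential $V=\abs{A}^2+\tfrac12$ does not: the second fundamental form disappears from the potential entirely, and the count reduces to comparing a bounded potential against the explicitly known spectrum $(2\pi j/F(\Sigma))^2$ of the circle Laplacian, so no bracketing and no weighted Poincar\'e constants are needed.

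Two of your steps fail quantitatively as a consequence. First, the mode cutoff: the profile curvature satisfies only $\abs{H_\Gamma}\le R/2+1/r_{\min}$, so $V_{\max}\sim R^2/4$, and your criterion $k^2/r^2\ge V$ kills modes only once $k\gtrsim r_{\max}\sqrt{V_{\max}}\sim R^2/2$. That leaves $O(R^2)$ modes, not $O(R)$: the additive term becomes $O(R^2)$ rather than $2R-1$, and the main term acquires an extra factor of $R$. The conjugated potential, by contrast, is nonpositive everywhere as soon as $k^2\ge 1+r_{\max}^2$, giving the cutoff $k<\sqrt{1+R^2}$. Moreover, your per-arc Poincar\'e constant involves $\sqrt{w_{\max}/w_{\min}}$ on top of $\ell\le 2F(\Sigma)r_{\min}^{-1}e^{R^2/4}$ and $\sqrt{V_{\max}}$, which does not reproduce the factor $\frac{2}{r_{\min}}e^{R^2/4}\bigl(3+\tfrac1{r_{\min}}+2R\bigr)$. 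Second, the lower bound: $V\ge\tfrac12$ is not the relevant positivity (for $k=1$ the effective potential is $V-1/r^2$, which need not be positive where $r$ is small), and the constant $\sqrt{2e}$ is never derived. In the paper it comes from the elementary inequality $re^{-\abs x^2/4}\le\sqrt{2/e}$, i.e.\ $\sigma^{-2}\ge 2e$, so the conjugated potentials for $k=0,1$ are bounded below by $2e$ times the metric factor, and testing against the low Fourier modes of the circle of circumference $F(\Sigma)$ gives $i_0,i_1>\frac{F(\Sigma)}{\pi}\sqrt{2e}-1$ and hence $i(\Sigma)\ge -4+i_0+2i_1>\frac{3\sqrt{2e}}{\pi}F(\Sigma)-7$. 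Your version, combining $V\ge\tfrac12$ with $\ell\ge\sqrt{2e}\,F(\Sigma)$, yields at best $\sqrt{e}/\pi$ per mode, off by $\sqrt2$. The missing idea in both directions is the conjugation/geodesic reformulation of Theorem \ref{thm:lk}.
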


Thus, if we have an upper bound on the entropy of $\Sigma$, a lower bound on its distance to the axis of rotation, and an upper bound on its diameter, we obtain an upper bound on its index.

For the lower bound, we already know from Liu \cite{l16} that $i(\Sigma)\ge3$. The above lower bound improves on Liu's bound when the entropy of $\Sigma$ is sufficiently high, roughly $F(\Sigma)\ge4.5$. The finer bounds in Theorem \ref{thm:finebound} can improve on this even further.

\subsection{Outline of this paper} In Section \ref{sec:preliminaries}, we discuss preliminaries. We begin by discussing mean curvature flow, self-shrinkers, and rescaled mean curvature flow. Next, we summarize some results from \cite{cm12}, defining the \emph{stability operator} $L$ that gives the second variation formula for Huisken's $F$-functional and whose negative eigenvalues we must count in order to determine the index of $\Sigma$. After that, we summarize key results from \cite{l16}, in which Liu applies \cite{cm12} to the rotationally symmetric case. One of the main results we use from \cite{l16} is the decomposition of the stability operator $L$ into its Fourier components $L_k$.

In Section \ref{sec:lk}, we prove the main ingredient of the results in this paper, Theorem \ref{thm:lk}. This theorem gives a simpler formula for $L_k$ in terms of the Laplacian on the torus cross-section with respect to a conformally changed metric. In Section \ref{sec:indexbounds}, we prove our index bounds in Theorem \ref{thm:finebound} by exploiting the fact that we know the eigenvalues of the Laplacian on a one-dimensional manifold explicitly. In Section \ref{sec:entropybounds}, we use our index upper bound techniques against Liu's index lower bound techniques, obtaining entropy upper bounds as a result. Finally, Liu's work proves the existence of three entropy-decreasing variations for rotationally symmetric tori. In Section \ref{sec:eigenfunctions}, we use Theorem \ref{thm:lk} to give simple formulas for two of these variations and to show that they have eigenvalue $-1$. We illustrate these variations in the case of the Angenent torus in Figures \ref{fig:variationQuiver} and \ref{fig:variation3d}.

Lastly, we look to the future. In Section \ref{sec:highdim}, we give some preliminary results in the higher-dimensional setting, and, in Section \ref{sec:future}, we present six potential projects that we hope are promising future directions for this work.

\section{Preliminaries}\label{sec:preliminaries}
\subsection{Hypersurfaces and mean curvature flow}
We discuss the notation and conventions we will use for immersed oriented hypersurfaces $\Sigma\subset\R^{n+1}$.

Let $\n$ denote the unit normal vector to $\Sigma$. Given a point $x\in\Sigma$ and a vector $v\in T_x\R^{n+1}$, let $v^\perp$ denote the scalar projection of $v$ onto $\n$, namely $v^\perp=\langle v,\n\rangle$. Let $v^\top$ denote the projection of $v$ onto $T_x\Sigma$, namely $v^\top=v-v^\perp\n$.

In a coordinate neighborhood of $\Sigma$, let $\{e_1,\dotsc,e_n\}$ denote the coordinate basis. Let $\nabla$ denote the covariant derivative on $\R^{n+1}$, and let $\nabla_i$ denote $\nabla_{e_i}$.

Given a point $x\in\Sigma$, we may choose Riemannian normal coordinates about $x$, in which case the $e_i$ are orthonormal at $x$, and $(\nabla_ie_j)^\top=0$ at $x$.

\begin{definition}
  Let $A_\Sigma$ denote the \emph{second fundamental form} of $\Sigma$. That is, given $v,w\in T_x\Sigma$, let
  \begin{equation*}
    A(v,w)=(\nabla_vw)^\perp
  \end{equation*}
  In coordinates, let
  \begin{equation*}
    a_{ij}=A(e_i,e_j).
  \end{equation*}
\end{definition}

The second fundamental form is symmetric, so $a_{ij}=a_{ji}$. In normal coordinates about $x$, we have at the point $x$ that $\nabla_ie_j=a_{ij}\n$.

\begin{definition}
  Let $H_\Sigma$ denote the \emph{mean curvature} of $\Sigma$, defined with the normalization convention $H_\Sigma=-\tr A_\Sigma$.
\end{definition}

    If we choose coordinates so that the $e_i$ are orthonormal at a particular point $x\in\Sigma$, then, at that point $x$, $H_\Sigma=-\sum_{i=1}^na_{ii}$. In general, $H_\Sigma=-\sum_{i,j=1}^na_{ij}g^{ij}$, where $g^{ij}$ is the inverse of the matrix $g_{ij}=\langle e_i,e_j\rangle$.

\begin{definition}
  A family of surfaces $\Sigma_t$ evolves under \emph{mean curvature flow} if
  \begin{equation*}
    \dot x=-H_\Sigma\n.
  \end{equation*}
  That is, each point on $\Sigma$ moves with speed $H_\Sigma$ in the inward normal direction.
\end{definition}

\subsection{Self-shrinkers}
A surface $\Sigma$ is a self-shrinker if it evolves under mean curvature flow by dilations. For this paper, however, we will restrict this terminology to refer only to surfaces that shrink to the origin in one unit of time. We refer the reader to \cite{cm12,cmp15,h90}.

\begin{definition}
  A surface $\Sigma$ is a \emph{self-shrinker} if $\Sigma_t=\sqrt{-t}\,\Sigma$ is a mean curvature flow for $t<0$.
\end{definition}

A consequence of this definition along with the definition of mean curvature flow is the self-shrinker equation.
\begin{proposition}[\cite{cm12}]
  If $\Sigma$ is a self-shrinker, then
  \begin{equation*}
    H_\Sigma=\tfrac12x^\perp,
  \end{equation*}
  where $x$ is the position vector in $\R^{n+1}$.
\end{proposition}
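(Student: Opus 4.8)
The plan is to parametrize the self-shrinking flow explicitly, differentiate in time, and match normal components against the mean curvature flow equation. Fix $\Sigma$ and, for $t<0$, set $\Sigma_t=\sqrt{-t}\,\Sigma$, so that a point $x\in\Sigma$ corresponds to the point $\sqrt{-t}\,x\in\Sigma_t$. Differentiating, the velocity of this point is $\dd{t}\bigl(\sqrt{-t}\,x\bigr)=-\tfrac1{2\sqrt{-t}}\,x$. Two scaling facts are then needed. First, since $\Sigma_t$ is the image of $\Sigma$ under the dilation $y\mapsto\sqrt{-t}\,y$, the unit normal is unchanged: the unit normal to $\Sigma_t$ at $\sqrt{-t}\,x$ equals $\n$, the unit normal to $\Sigma$ at $x$. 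Second, the second fundamental form, and hence the mean curvature, scales inversely with length under dilations, so the mean curvature of $\Sigma_t$ at $\sqrt{-t}\,x$ is $\tfrac1{\sqrt{-t}}H_\Sigma$, with $H_\Sigma$ evaluated at $x$.

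Next I would impose the mean curvature flow condition. The one point requiring care is that mean curvature flow is a geometric flow: the equation $\dot x=-H_\Sigma\n$ is to be read modulo tangential reparametrization, since reparametrizing each $\Sigma_t$ by a $t$-dependent diffeomorphism does not alter the flow. Equivalently, a family $\Sigma_t$ is a mean curvature flow precisely when $\langle\dot x,\n\rangle=-H_{\Sigma_t}$. Taking the inner product of the velocity above with $\n$ and imposing this gives
\begin{equation*}
  \Bigl\langle-\tfrac1{2\sqrt{-t}}\,x,\n\Bigr\rangle=-\tfrac1{\sqrt{-t}}\,H_\Sigma,
\end{equation*}
and the factors of $\sqrt{-t}$ cancel — this cancellation is exactly the consistency condition that makes a dilation family an honest mean curvature flow — leaving $\tfrac12\langle x,\n\rangle=H_\Sigma$, that is, $H_\Sigma=\tfrac12x^\perp$, since $x^\perp=\langle x,\n\rangle$ by the notational convention of the preliminaries.

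The main (indeed only) obstacle is the tangential-reparametrization subtlety just mentioned: the naive parametrization $x\mapsto\sqrt{-t}\,x$ of $\Sigma_t$ has velocity $-\tfrac1{2\sqrt{-t}}x$, whose tangential part $-\tfrac1{2\sqrt{-t}}x^\top$ is generally nonzero, so it does not literally solve $\dot x=-H_\Sigma\n$ on the nose. This is handled either by invoking the standard fact that only the normal component of the velocity is geometrically meaningful for mean curvature flow, or, if one insists on the strict form of the equation, by precomposing with a suitable time-dependent diffeomorphism of $\Sigma$ that cancels the tangential component; either way the normal-component computation is untouched and yields the self-shrinker equation. (An alternative route, which the cited reference also records, is to characterize self-shrinkers variationally as critical points of Huisken's weighted area functional and read off $H_\Sigma=\tfrac12x^\perp$ from the first variation formula; I would still present the direct computation above as the primary argument since it is shorter and requires no functional-analytic setup.)
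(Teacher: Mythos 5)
Your derivation is correct, and since the paper states this proposition without proof (citing Colding--Minicozzi), your computation is exactly the standard argument one would supply: differentiate the dilation family $\Sigma_t=\sqrt{-t}\,\Sigma$, use that $\n$ is dilation-invariant while $H$ scales like $1/\sqrt{-t}$, and equate normal components of the velocity with $-H_{\Sigma_t}$, with the tangential-reparametrization point correctly flagged and dispatched. The signs are consistent with the paper's conventions ($H_\Sigma=-\tr A_\Sigma$, $\dot x=-H_\Sigma\n$, $x^\perp=\langle x,\n\rangle$), so nothing further is needed.
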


We also have an extremely useful variational formulation for self-shrinkers in terms of Huisken's $F$-functional.

\begin{definition}
  The \emph{$F$-functional} takes a surface and computes its weighted area via the formula
  \begin{equation*}
    F(\Sigma)=\frac1{(4\pi)^{n/2}}\int_\Sigma e^{-\abs x^2/4}.
  \end{equation*}
\end{definition}

The role of the normalization constant $(4\pi)^{-n/2}$ is to ensure that if $\Sigma$ is a plane through the origin, then $F(\Sigma)=1$.

\begin{definition}
  $\Sigma$ is a \emph{critical point} of $F$ if for any $f\colon\Sigma\to\R$ with compact support, $F$ does not change to first order as we vary $\Sigma$ by $f$ in the normal direction. More precisely, if we let $\Sigma_s=\{x+sf\n\mid x\in\Sigma\}$, then we have $\dd s\bigr\rvert_{s=0}F(\Sigma_s)=0$.
\end{definition}

\begin{proposition}[\cite{cm12}]\label{prop:shrinkercritical}
  $\Sigma$ is a self-shrinker if and only if $\Sigma$ is a critical point of $F$.
\end{proposition}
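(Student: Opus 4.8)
The plan is to prove Proposition \ref{prop:shrinkercritical} by directly computing the first variation of the $F$-functional and showing that the resulting Euler--Lagrange equation is exactly the self-shrinker equation $H_\Sigma = \tfrac12 x^\perp$. So this reduces to a single computation: the first variation formula for $F$ under a compactly supported normal variation.

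First I would set up the variation. Let $\Sigma_s = \{x + s f \n \mid x \in \Sigma\}$ for a compactly supported $f \colon \Sigma \to \R$, and write $F(\Sigma_s)$ as an integral over $\Sigma$ of the pulled-back integrand $\frac1{(4\pi)^{n/2}} e^{-|x+sf\n|^2/4}$ times the Jacobian of the variation (the ratio of the induced volume forms). I then need two standard sub-computations at $s = 0$: (i) the derivative of the weight, $\dd s e^{-|x+sf\n|^2/4} = -\tfrac12 \langle x, f\n\rangle e^{-|x|^2/4} = -\tfrac12 f x^\perp e^{-|x|^2/4}$, using the definition $x^\perp = \langle x, \n\rangle$; and (ii) the first variation of the area element, which is the classical formula $\dd s \big|_{s=0} d\vol_{\Sigma_s} = -H_\Sigma f \, d\vol_\Sigma$ (with our sign convention $H_\Sigma = -\tr A_\Sigma$, so that the normalization matches the mean curvature flow convention $\dot x = -H_\Sigma \n$; I should double-check the sign carefully against the conventions fixed in the preliminaries). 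Combining these via the product rule gives
\begin{equation*}
  \dd s\Bigr\rvert_{s=0} F(\Sigma_s) = \frac1{(4\pi)^{n/2}}\int_\Sigma \left(-H_\Sigma - \tfrac12 x^\perp\right) f \, e^{-|x|^2/4} \, d\vol_\Sigma.
\end{equation*}
Wait --- I should be careful: the mean curvature term comes in with sign $-H_\Sigma f$ from the area variation, but I could also integrate by parts on a gradient term if I had chosen to expand $|x + sf\n|^2$ less cleverly; the clean approach above avoids that. Let me also note that since the surface may be noncompact, the compact support of $f$ is what makes every integral here finite and legitimizes differentiating under the integral sign.

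With this formula in hand, the proof concludes by a standard argument: $\dd s\big|_{s=0} F(\Sigma_s) = 0$ for all compactly supported $f$ if and only if the factor $-H_\Sigma - \tfrac12 x^\perp$ vanishes identically (the weight $e^{-|x|^2/4}$ being strictly positive, and $f$ ranging over a space dense enough to conclude pointwise vanishing). Hmm, I need to get the sign convention consistent so that this reads $H_\Sigma = \tfrac12 x^\perp$ and not $H_\Sigma = -\tfrac12 x^\perp$; this is purely a matter of tracking the sign in the first variation of area against the $H_\Sigma = -\tr A_\Sigma$ normalization, and the already-stated self-shrinker proposition $H_\Sigma = \tfrac12 x^\perp$ tells me which way it must come out. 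For the converse direction, one could either run the same computation backward, or, more satisfyingly, invoke the Definition-level equivalence directly: if $H_\Sigma = \tfrac12 x^\perp$ then the integrand vanishes and $F$ is stationary. I would also remark that a cleaner conceptual route expresses the first variation as $\dd s\big|_{s=0} F(\Sigma_s) = -\frac1{(4\pi)^{n/2}}\int_\Sigma f\left(H_\Sigma - \tfrac12 x^\perp\right) e^{-|x|^2/4}$, so that the drift-Laplacian operator $\L$ from later in the paper makes its first appearance here as the linearization.

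The main obstacle --- really the only subtle point, since the rest is a textbook computation --- is bookkeeping the sign and normalization conventions: the paper uses $H_\Sigma = -\tr A_\Sigma$ and $\dot x = -H_\Sigma\n$, which is the opposite of some references, so I must make sure the first variation of the area element is written with the sign that reproduces the stated equation $H_\Sigma = \tfrac12 x^\perp$. I would resolve this by checking against the sphere: for the round sphere $S^n_{\sqrt{2n}}$ of radius $\sqrt{2n}$ centered at the origin, $x^\perp = |x| = \sqrt{2n}$ (outward normal) and $H_\Sigma = -\tr A_\Sigma = n/\sqrt{2n} = \sqrt{2n}/2 \cdot ... $ --- more carefully, with inward normal the sphere shrinks under the flow, and one verifies $H_\Sigma = \tfrac12 x^\perp$ holds with the stated normalization --- and that pins down every sign. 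Everything else is the differentiation-under-the-integral-sign justification, which the compact support of $f$ handles immediately.
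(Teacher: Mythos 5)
The paper itself gives no proof of this proposition --- it is quoted from \cite{cm12} --- and the proof in that reference is exactly the first-variation computation you outline, so your approach is the right one. Two concrete points, though. First, the sign you left unresolved comes out the opposite way from what you wrote: with the paper's conventions $a_{ij}=\langle\nabla_ie_j,\n\rangle$ and $H_\Sigma=-\tr A_\Sigma$, one has $\langle\nabla_i\n,e_i\rangle=-a_{ii}$, hence $\div_\Sigma(f\n)=-f\,\tr A_\Sigma=+fH_\Sigma$, so the area element varies as $+H_\Sigma f\,d\mathrm{vol}_\Sigma$, not $-H_\Sigma f\,d\mathrm{vol}_\Sigma$. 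Your displayed formula therefore has the wrong sign on the $H_\Sigma$ term; the correct integrand is $\bigl(H_\Sigma-\tfrac12x^\perp\bigr)f\,e^{-\abs x^2/4}$, which matches the first-variation proposition stated later in the paper, namely $\dd s\bigr\rvert_{s=0}F(\Sigma_s)=-\frac1{(4\pi)^{n/2}}\int_\Sigma(-H_\Sigma+\tfrac12x^\perp)f\,e^{-\abs x^2/4}$, and yields the Euler--Lagrange equation $H_\Sigma=\tfrac12x^\perp$ as desired. Your proposed sphere check does pin this down (for the sphere of radius $\sqrt{2n}$ with outward normal, $H_\Sigma=n/\sqrt{2n}$ and $x^\perp=\sqrt{2n}$), so the error is flagged and fixable, but as written the computation proves the wrong equation.

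Second, your argument establishes the equivalence ``critical point of $F$ $\iff$ $H_\Sigma=\tfrac12x^\perp$,'' whereas the proposition asserts ``critical point of $F$ $\iff$ self-shrinker,'' where a self-shrinker is \emph{defined} dynamically by $\Sigma_t=\sqrt{-t}\,\Sigma$ being a mean curvature flow. The paper only records the implication ``self-shrinker $\Rightarrow H_\Sigma=\tfrac12x^\perp$,'' so to close the loop you also need the converse: if $H_\Sigma=\tfrac12x^\perp$, then the velocity of the dilating family $\sqrt{-t}\,\Sigma$ has normal component $-H_{\Sigma_t}$, and the remaining tangential component is absorbed by a reparametrization, so the family is a mean curvature flow. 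This is standard but is not the same as the ``definition-level equivalence'' you invoke, and it deserves a sentence.
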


The definition of $F$ singles out a particular point in space and time. Colding and Minicozzi introduce a related concept called the entropy, which coincides with the $F$-functional on self-shrinkers but is invariant under translations and dilations.
\begin{definition}
  The \emph{entropy} of a hypersurface $\Sigma\subset\R^{n+1}$ is the supremum of the $F$-functional evaluated on all translates and dilates of $\Sigma$, that is $\sup_{x_0,t_0}F(x_0+\sqrt{t_0}\Sigma)$.
\end{definition}
If $\Sigma$ is a self-shrinker, defined as above to shrink to the origin in one unit of time, then the supremum among translates and dilates is attained at $\Sigma$ itself, so the entropy of $\Sigma$ coincides with $F(\Sigma)$. However, entropy-decreasing variations of $\Sigma$ and $F$-decreasing variations of $\Sigma$ are not quite the same: when we ask about entropy-decreasing variations, we exclude the ``trivial'' $F$-decreasing variations of translation and dilation.

\subsection{Rescaled mean curvature flow}
The gradient flow for the $F$-functional is rescaled mean curvature flow. While rescaled mean curvature flow is not necessary for our results, we feel it provides valuable context, so we briefly introduce it here.

\begin{definition}
  A family of surfaces $\tilde\Sigma_\tau$ evolves by rescaled mean curvature flow if $\tilde\Sigma_\tau=\frac1{\sqrt{-t}}\Sigma_t$, where $t=-e^{-\tau}$ and $\Sigma_t$ evolves by mean curvature flow.
\end{definition}

The definition of rescaled mean curvature flow is designed so that as $\Sigma_t$ approaches a singularity at the origin at time $t=0$, we zoom in on the origin and reparametrize time so that the rescaled surface $\tilde\Sigma_\tau$ remains a fixed size as $\tau\to\infty$. In particular, it is easy to see a self-shrinker is a stationary point for rescaled mean curvature flow.

Rescaled mean curvature flow has the following evolution equation.
\begin{proposition}
  A family of surfaces $\tilde\Sigma_\tau$ evolves by rescaled mean curvature flow if the points $x\in\tilde\Sigma_\tau$ flow according to
  \begin{equation*}
    \dd\tau x=\left(-H_{\tilde\Sigma}+\tfrac12x^\perp\right)\n.
  \end{equation*}
\end{proposition}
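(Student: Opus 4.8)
The plan is to differentiate the definition directly. A point on $\tilde\Sigma_\tau$ has the form $\tilde x=\tfrac1{\sqrt{-t}}\,x=e^{\tau/2}x$ for $x$ a point on $\Sigma_t$, where $t=-e^{-\tau}$, so that $\dd[t]{\tau}=e^{-\tau}=-t$. Writing the trajectory of a point as $\tilde x(\tau)=e^{\tau/2}x(t(\tau))$ and applying the product and chain rules together with the mean curvature flow equation $\dd[x]{t}=-H_\Sigma\n$ from the definition, I get
\begin{equation*}
  \dd[\tilde x]{\tau}=\tfrac12 e^{\tau/2}x+e^{\tau/2}\bigl(-H_\Sigma\n\bigr)e^{-\tau}=\tfrac12 e^{\tau/2}x-e^{-\tau/2}H_\Sigma\n.
\end{equation*}

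Next I would rewrite both terms in terms of quantities intrinsic to $\tilde\Sigma_\tau$. The first term is just $\tfrac12\tilde x$, since $e^{\tau/2}x=\tilde x$. For the second, I use the elementary scaling laws under the dilation $\tilde\Sigma=e^{\tau/2}\Sigma$: the unit normal is unchanged, $\n_{\tilde\Sigma}(\tilde x)=\n_\Sigma(x)$, while a dilation by $\lambda$ multiplies the coordinate second fundamental form $a_{ij}$ by $\lambda$ and the metric $g_{ij}$ by $\lambda^2$, so that $H=-g^{ij}a_{ij}$ scales by $\lambda^{-1}$; hence $H_{\tilde\Sigma}(\tilde x)=e^{-\tau/2}H_\Sigma(x)$, i.e.\ $e^{-\tau/2}H_\Sigma\n=H_{\tilde\Sigma}\n$. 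This gives
\begin{equation*}
  \dd[\tilde x]{\tau}=\tfrac12\tilde x-H_{\tilde\Sigma}\n.
\end{equation*}

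Finally I would split the position vector into normal and tangential parts, $\tilde x=\tilde x^\perp\n+\tilde x^\top$, to obtain $\dd[\tilde x]{\tau}=\bigl(-H_{\tilde\Sigma}+\tfrac12\tilde x^\perp\bigr)\n+\tfrac12\tilde x^\top$. The tangential term $\tfrac12\tilde x^\top$ generates only a reparametrization and does not move $\tilde\Sigma_\tau$ as a subset of $\R^{n+1}$, so the normal speed---which is what the evolution of a surface records---is exactly $-H_{\tilde\Sigma}+\tfrac12\tilde x^\perp$, as claimed. Equivalently, passing from the parametrization inherited from $\Sigma_t$ to the purely normal one amounts to subtracting off the tangential vector field $\tfrac12\tilde x^\top$.

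The computation itself is a short application of the chain rule, so I do not anticipate a genuine obstacle; the one point that needs care is the last one, namely that ``evolves by rescaled mean curvature flow'' is a statement about surfaces rather than about parametrized points, so the tangential component $\tfrac12\tilde x^\top$ should be identified and discarded rather than ignored silently. Confirming the scaling relation $H_{\tilde\Sigma}=e^{-\tau/2}H_\Sigma$ from the conventions fixed in the preliminaries is the only other spot where one must watch the signs and powers of $\lambda$.
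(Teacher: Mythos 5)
Your computation is correct and is exactly the standard derivation this proposition rests on: the paper states it without proof, taking for granted the chain-rule calculation, the scaling law $H_{\tilde\Sigma}=e^{-\tau/2}H_\Sigma$, and the discarding of the tangential term $\tfrac12\tilde x^\top$ as a reparametrization. You rightly flag that last step as the one point needing explicit justification.
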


The fact that self-shrinkers are critical points of the $F$-functional is a special case of the fact that rescaled mean curvature flow is the gradient flow of $F$, in the following sense.
\begin{proposition}[\cite{cm12}]
  Let $\Sigma$ be a surface, let $f\colon\Sigma\to\R$, and let $\Sigma_s$ be the normal variation of $\Sigma$ corresponding to $f$, namely $\Sigma_s=\{x+sf\n\mid x\in\Sigma\}$. Then
  \begin{equation*}
    \dd s\Bigr\rvert_{s=0}F(\Sigma_s)=-\frac1{(4\pi)^{n/2}}\int_\Sigma\left(-H_\Sigma+\tfrac12x^\perp\right)f\,e^{-\abs x^2/4}.
  \end{equation*}
\end{proposition}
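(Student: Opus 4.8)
The plan is to differentiate under the integral sign, splitting the variation of $F(\Sigma_s)=(4\pi)^{-n/2}\int_{\Sigma}e^{-\abs x^2/4}$ into the contribution from the moving Gaussian weight and the contribution from the changing area element. For $s$ small the map $x\mapsto x+sf\n$ is an immersion (automatic when $\Sigma$ is a closed torus, as in this paper, and true in general under the customary hypothesis that $f$ has compact support), so $\Sigma_s$ carries a well-defined induced area form $d\mu_s$, and the same hypotheses justify interchanging $\dd s$ with $\int_\Sigma$. Thus it suffices to compute the pointwise derivative at $s=0$ of the integrand $e^{-\abs{x+sf\n}^2/4}\,d\mu_s$ and then integrate.

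First I would handle the weight. The point of $\Sigma$ at position $x$ moves to $x+sf\n$, whose squared norm is $\abs x^2+2sf\langle x,\n\rangle+s^2f^2$, so
\begin{equation*}
  \dd s\Bigr\rvert_{s=0}e^{-\abs{x+sf\n}^2/4}=-\tfrac12 f\langle x,\n\rangle\,e^{-\abs x^2/4}=-\tfrac12 f\,x^\perp e^{-\abs x^2/4}.
\end{equation*}
Next I would compute the first variation of the area element, which is the only step with genuine geometric content. In local coordinates write $g_{ij}(s)=\langle\partial_i(x+sf\n),\partial_j(x+sf\n)\rangle$, and use $\partial_i(f\n)=(\partial_i f)\n+f\nabla_i\n$ together with $\langle\n,\partial_j x\rangle=0$ and $\langle\nabla_i\n,e_j\rangle=-\langle\n,\nabla_i e_j\rangle=-a_{ij}$ to get $\dd s\big|_{s=0}g_{ij}=-2f\,a_{ij}$. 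Since $\dd s\sqrt{\det g}=\tfrac12\sqrt{\det g}\,g^{ij}\,\dd s g_{ij}$ and $H_\Sigma=-\sum_{i,j}a_{ij}g^{ij}$, this yields $\dd s\big|_{s=0}d\mu_s=f\,H_\Sigma\,d\mu$; the sign here is consistent with the fact that mean curvature flow $\dot x=-H_\Sigma\n$ decreases area.

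Finally I would assemble the two pieces by the product rule and integrate over $\Sigma$:
\begin{equation*}
  \dd s\Bigr\rvert_{s=0}F(\Sigma_s)=\frac1{(4\pi)^{n/2}}\int_\Sigma\left(H_\Sigma-\tfrac12 x^\perp\right)f\,e^{-\abs x^2/4}=-\frac1{(4\pi)^{n/2}}\int_\Sigma\left(-H_\Sigma+\tfrac12 x^\perp\right)f\,e^{-\abs x^2/4},
\end{equation*}
which is the claimed identity. I do not expect any serious obstacle: the first variation of the area element is classical and the rest is a direct computation, the only subtleties being the routine justification of differentiating under the integral sign and careful bookkeeping of the sign conventions for $A_\Sigma$, $H_\Sigma$, and $x^\perp$. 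As a consistency check, requiring the right-hand side to vanish for every compactly supported $f$ forces $H_\Sigma=\tfrac12 x^\perp$, recovering the self-shrinker equation and hence Proposition \ref{prop:shrinkercritical}.
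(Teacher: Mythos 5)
Your computation is correct and is exactly the standard first-variation argument from Colding--Minicozzi (the paper itself cites \cite{cm12} for this proposition rather than proving it): differentiate the Gaussian weight to get the $-\tfrac12 f x^\perp$ term, compute $\dd{s}\bigr\rvert_{s=0}g_{ij}=-2fa_{ij}$ to get the $fH_\Sigma$ term from the area element, and combine. The signs are consistent with the paper's conventions $a_{ij}=(\nabla_ie_j)^\perp$ and $H_\Sigma=-\tr A_\Sigma$, and your sanity checks (area decreases under $\dot x=-H_\Sigma\n$; the critical-point condition recovers the self-shrinker equation) confirm this.
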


\subsection{The stability operator}

Given a critical point of a flow, the next natural question to ask is about the stability of that critical point. If we perturb a self-shrinker, will the resulting surface flow back to the self-shrinker under rescaled mean curvature flow, or will it flow to a different critical point? What is the maximum dimension of a space of unstable variations? For a gradient flow, answering this question amounts to computing the eigenvalues of the Hessian of the function $F$. Colding and Minicozzi compute this second derivative.

\begin{definition}
  Let the \emph{drift Laplacian} or \emph{Bakry--Emery Laplacian} or \emph{Witten Laplacian} $\L_\Sigma$ be defined by
  \begin{equation*}
    \L_\Sigma f=e^{\abs x^2/4}\div_\Sigma\left(e^{-\abs x^2/4}\grad_\Sigma f\right).
  \end{equation*}
  for $f\colon\Sigma\to\R$.
\end{definition}
The motivation for this definition is that $\L_\Sigma$ has an integration by parts formula analogous to that of the Laplacian. Namely, for compactly supported $f$ and $g$, we have
\begin{equation*}
  \int_\Sigma g(-\L_\Sigma)f\,e^{-\abs x^2/4}=\int_\Sigma\left\langle\grad_\Sigma g,\grad_\Sigma f\right\rangle\,e^{-\abs x^2/4}.
\end{equation*}

\begin{definition}
  Let the \emph{stability operator} $L_\Sigma$ acting on functions $f\colon\Sigma\to\R$ be defined by
  \begin{equation*}
    L_\Sigma=\L_\Sigma+\abs{A_\Sigma}^2+\tfrac12.
  \end{equation*}
\end{definition}

\begin{proposition}[\cite{cm12}]
  Let $\Sigma$ be a self-shrinker, let $f\colon\Sigma\to\R$, and let $\Sigma_s$ be the normal variation of $\Sigma$ corresponding to $f$, namely $\Sigma_s=\{x+sf\n\mid x\in\Sigma\}$. Then
  \begin{equation*}
    \ddtwo s\Bigr\rvert_{s=0}F(\Sigma_s)=\frac1{(4\pi)^{n/2}}\int_\Sigma f(-L_\Sigma)f\,e^{-\abs x^2/4}.
  \end{equation*}
\end{proposition}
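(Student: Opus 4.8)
The plan is to differentiate the first variation formula a second time, using the self-shrinker equation to annihilate every term but one.

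First I would upgrade the first variation formula so that it holds at every parameter, not only at $s=0$. Along the family $\Sigma_s=\{x+sf\n\mid x\in\Sigma\}$ the variation vector field is $f\n$, which is normal to $\Sigma_s$ only at $s=0$; its component normal to $\Sigma_s$ is $u_s=f\langle\n,\n_{\Sigma_s}\rangle$, so $u_0=f$. Because $F(\Sigma_s)$ depends only on $\Sigma_s$ as a subset — a purely tangential motion is a reparametrization and contributes nothing — the proposition above, applied with $\Sigma_s$ in place of $\Sigma$, gives
\[
  \dd{s}F(\Sigma_s)=-\frac1{(4\pi)^{n/2}}\int_{\Sigma_s}\left(-H_{\Sigma_s}+\tfrac12\langle x,\n_{\Sigma_s}\rangle\right)u_s\,e^{-\abs x^2/4}.
\]
Abbreviate the weighted mean curvature of $\Sigma_s$ by $w_s=-H_{\Sigma_s}+\tfrac12\langle x,\n_{\Sigma_s}\rangle$. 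Now differentiate once more and evaluate at $s=0$. Pulling the integral back to a fixed parameter domain and applying the product rule, the derivative lands on $w_s$, on $u_s$, on $e^{-\abs x^2/4}$, or on the Jacobian; but $w_0\equiv0$ on $\Sigma$ is exactly the self-shrinker equation $H_\Sigma=\tfrac12x^\perp$, so the last three contributions vanish. Using $u_0=f$ and the compact support of $f$ (or closedness of $\Sigma$) to differentiate under the integral, we are left with
\[
  \ddtwo{s}\Bigr\rvert_{s=0}F(\Sigma_s)=-\frac1{(4\pi)^{n/2}}\int_\Sigma\left(\dd{s}\Bigr\rvert_{s=0}w_s\right)f\,e^{-\abs x^2/4}.
\]

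It then remains to identify $\dd{s}\bigr\rvert_{s=0}w_s$ with $L_\Sigma f$. At $s=0$ the surface moves purely normally with speed $f$, so I would invoke the classical normal-variation identities $\dd{s}\bigr\rvert_{s=0}x=f\n$, $\dd{s}\bigr\rvert_{s=0}\n_{\Sigma_s}=-\grad_\Sigma f$, and the linearization of the mean curvature $\dd{s}\bigr\rvert_{s=0}H_{\Sigma_s}=-\Delta_\Sigma f-\abs{A_\Sigma}^2f$ (the sign being compatible with the convention $H=-\tr A$ in use here: on the shrinking sphere or cylinder with $f\equiv1$ both sides reduce to $-\abs{A_\Sigma}^2$). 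Differentiating the inner product, $\dd{s}\bigr\rvert_{s=0}\langle x,\n_{\Sigma_s}\rangle=\langle f\n,\n\rangle+\langle x,-\grad_\Sigma f\rangle=f-\langle x^\top,\grad_\Sigma f\rangle$, whence
\[
  \dd{s}\Bigr\rvert_{s=0}w_s=\Delta_\Sigma f+\abs{A_\Sigma}^2f+\tfrac12f-\tfrac12\langle x^\top,\grad_\Sigma f\rangle.
\]
On the other hand, expanding $\L_\Sigma f=e^{\abs x^2/4}\div_\Sigma\!\left(e^{-\abs x^2/4}\grad_\Sigma f\right)=\Delta_\Sigma f-\tfrac12\langle x^\top,\grad_\Sigma f\rangle$, one reads off $\dd{s}\bigr\rvert_{s=0}w_s=\L_\Sigma f+\abs{A_\Sigma}^2f+\tfrac12f=L_\Sigma f$. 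Substituting into the display above and rewriting $-\int_\Sigma(L_\Sigma f)f=\int_\Sigma f(-L_\Sigma)f$ yields the stated formula.

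The step I expect to be the real obstacle is the mean curvature linearization $\dd{s}\bigr\rvert_{s=0}H_{\Sigma_s}=-\Delta_\Sigma f-\abs{A_\Sigma}^2f$: it is the only genuinely nontrivial computation, requiring one to differentiate the induced metric and the second fundamental form along the normal flow, and some care is needed to match the sign with this paper's convention $H=-\tr A$, which differs from several standard references. A self-contained alternative avoiding both this and the bookkeeping around the non-normal variation field is to fix a parametrization $X_0$ of $\Sigma$, set $X_s=X_0+sf\n$, and differentiate $\int_\Sigma e^{-\abs{X_s}^2/4}\sqrt{\det g_s}$ twice in $s$ directly, integrating by parts against $e^{-\abs x^2/4}$ only at the end; this replaces the mean curvature linearization with the second variation of $\sqrt{\det g_s}$, a computation of comparable length.
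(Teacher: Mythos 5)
The paper offers no proof of this proposition; it is quoted verbatim from Colding--Minicozzi \cite{cm12}, so there is nothing internal to compare against. Your argument is correct and is essentially the standard derivation in \cite{cm12}: differentiate the first variation formula, use that the weighted mean curvature $-H_\Sigma+\tfrac12x^\perp$ vanishes identically on a self-shrinker to kill all terms except the one where the derivative lands on it, and identify that derivative with $L_\Sigma f$ via the standard linearizations $\partial_s\n=-\grad_\Sigma f$ and $\partial_s H=-\Delta_\Sigma f-\abs{A_\Sigma}^2f$ (your sign check against the sphere, and the match with $\L_\Sigma f=\Delta_\Sigma f-\tfrac12\langle x^\top,\grad_\Sigma f\rangle$, are both right under this paper's convention $H=-\tr A=\div_\Sigma\n$). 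The handling of the non-normality of $f\n$ along $\Sigma_s$ for $s\neq0$ is also correctly dispatched, since those terms are all multiplied by the vanishing factor $w_0$.
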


\begin{definition}[Sign convention for eigenvalues]
  We use the sign convention $\Delta=\div\grad=-d^*d$, and consequently we will say that $f\neq0$ is an eigenfunction of a differential operator $L$ with eigenvalue $\lambda$ if $-Lf = \lambda f$.
\end{definition}

We conclude that eigenfunctions of the stability operator $L_\Sigma$ with negative eigenvalues are unstable variations of the self-shrinker $\Sigma$: If we vary $\Sigma$ in that direction, then rescaled mean curvature flow will take the surface away from $\Sigma$. Meanwhile, eigenfunctions of the stability operator $L_\Sigma$ with positive eigenvalues are stable variations of $\Sigma$: There exists a rescaled mean curvature flow line that approaches $\Sigma$ from that direction.

The variations corresponding to translating or dilating a self-shrinker $\Sigma$ are unstable eigenfunctions of $L_\Sigma$, due to the following geometric reason: Translating $\Sigma$ moves the location of the singularity in space, and dilating $\Sigma$ moves the location of the singularity in time. Since rescaled mean curvature flow zooms in on the origin as time approaches $t=0$, one can check that under rescaled mean curvature flow the translated or dilated surface will continue translating or dilating away from $\Sigma$, respectively.

Translating $\Sigma$ in the direction $v\in R^{n+1}$ corresponds to the normal variation $f=v^\perp$, and dilating $\Sigma$ corresponds to the normal variation $f=H_\Sigma$. Colding and Minicozzi compute the corresponding eigenvalues of the stability operator.
\begin{proposition}[\cite{cm12}]
  For any vector $v\in\R^{n+1}$, we have $L_\Sigma v^\perp=\frac12v^\perp$. Meanwhile, for dilation, we have $L_\Sigma H_\Sigma=H_\Sigma$.
\end{proposition}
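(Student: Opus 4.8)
The plan is a direct computation in the spirit of Colding--Minicozzi. First I would unwind the definition of $\L_\Sigma$: since $\grad_\Sigma\abs x^2=2x^\top$ and $\langle x^\top,\grad_\Sigma f\rangle=\langle x,\grad_\Sigma f\rangle$ for any tangential gradient, expanding $e^{\abs x^2/4}\div_\Sigma\bigl(e^{-\abs x^2/4}\grad_\Sigma f\bigr)$ gives the drift form
\begin{equation*}
  \L_\Sigma f=\Delta_\Sigma f-\tfrac12\langle x,\grad_\Sigma f\rangle .
\end{equation*}
Because $L_\Sigma=\L_\Sigma+\abs{A_\Sigma}^2+\tfrac12$, it then suffices to evaluate $\Delta_\Sigma$ and the drift term $\langle x,\grad_\Sigma(\cdot)\rangle$ on the two functions $f=v^\perp=\langle v,\n\rangle$ and $f=H_\Sigma=\tfrac12\langle x,\n\rangle$; for the latter I would compute with $\phi:=\langle x,\n\rangle$ and divide by two at the end.

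Next I would record the pointwise identities on a general hypersurface, working in Riemannian normal coordinates at a point, where $\nabla_i\n=-a_{ij}e_j$, $\nabla_i x=e_i$, and $\nabla_ie_j=a_{ij}\n$. The first derivatives are $\nabla_i\langle v,\n\rangle=-a_{ij}\langle v,e_j\rangle$ and $\nabla_i\langle x,\n\rangle=-a_{ij}\langle x,e_j\rangle$. Differentiating once more, using $\nabla_i\langle x,e_j\rangle=\delta_{ij}+a_{ij}\langle x,\n\rangle$ together with the Codazzi equation in the flat ambient space (which makes $\nabla A_\Sigma$ totally symmetric, hence $\sum_i\nabla_ia_{ij}=-\nabla_jH_\Sigma$ with the convention $H_\Sigma=-\tr A_\Sigma$), I obtain
\begin{equation*}
  \Delta_\Sigma\langle v,\n\rangle=\langle\grad_\Sigma H_\Sigma,v\rangle-\abs{A_\Sigma}^2\langle v,\n\rangle,\qquad
  \Delta_\Sigma\langle x,\n\rangle=\langle\grad_\Sigma H_\Sigma,x\rangle+H_\Sigma-\abs{A_\Sigma}^2\langle x,\n\rangle .
\end{equation*}
So far nothing uses that $\Sigma$ is a self-shrinker.

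Now I would feed in the self-shrinker equation $H_\Sigma=\tfrac12\langle x,\n\rangle$. It gives $\grad_\Sigma H_\Sigma=\tfrac12\grad_\Sigma\langle x,\n\rangle$, and using the symmetry $a_{ij}=a_{ji}$ one checks that $\langle\grad_\Sigma H_\Sigma,v\rangle=\tfrac12\langle x,\grad_\Sigma\langle v,\n\rangle\rangle$ and, likewise with $v$ replaced by $x$, $\langle\grad_\Sigma H_\Sigma,x\rangle=\tfrac12\langle x,\grad_\Sigma\langle x,\n\rangle\rangle$. Substituting into $\L_\Sigma f=\Delta_\Sigma f-\tfrac12\langle x,\grad_\Sigma f\rangle$, the drift term exactly cancels the $\grad_\Sigma H_\Sigma$ term, leaving $\L_\Sigma\langle v,\n\rangle=-\abs{A_\Sigma}^2\langle v,\n\rangle$ and $\L_\Sigma\phi=H_\Sigma-\abs{A_\Sigma}^2\phi=\bigl(\tfrac12-\abs{A_\Sigma}^2\bigr)\phi$, hence $\L_\Sigma H_\Sigma=\bigl(\tfrac12-\abs{A_\Sigma}^2\bigr)H_\Sigma$. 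Adding $\abs{A_\Sigma}^2+\tfrac12$ yields $L_\Sigma v^\perp=\tfrac12v^\perp$ and $L_\Sigma H_\Sigma=H_\Sigma$.

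I expect the only genuine obstacle to be sign bookkeeping: carrying the normalization $H_\Sigma=-\tr A_\Sigma$ and the choice of unit normal consistently, and invoking Codazzi so that $\sum_i\nabla_ia_{ij}=-\nabla_jH_\Sigma$ comes out with the correct sign; once that identity is pinned down, the rest is routine algebra. As a sanity check I would verify both formulas on the self-shrinking sphere of radius $\sqrt{2n}$, where $H_\Sigma$ is constant, $v^\perp$ is a lowest nonconstant eigenfunction of the round Laplacian, and $\abs{A_\Sigma}^2=\tfrac12$. (There is also a more conceptual route, comparing the normal variations induced by translating and dilating $\Sigma$ with the derivatives of the base-point, base-time family $F_{x_0,t_0}(\Sigma)$, but that only delivers the integrated second-variation identity rather than the pointwise eigenfunction equations, so I would keep the computation above.)
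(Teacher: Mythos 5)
Your computation is correct and is essentially the standard Colding--Minicozzi argument: the paper does not prove this proposition itself but cites \cite{cm12}, and your derivation (Simons-type identities for $\Delta_\Sigma\langle v,\n\rangle$ and $\Delta_\Sigma\langle x,\n\rangle$ via Codazzi, then cancellation of the $\grad_\Sigma H_\Sigma$ term against the drift term using $H_\Sigma=\tfrac12 x^\perp$) is precisely the proof in that reference, with the signs handled consistently with the convention $H_\Sigma=-\tr A_\Sigma$. The sanity check on the sphere of radius $\sqrt{2n}$, where $\abs{A_\Sigma}^2=\tfrac12$, also comes out right.
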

Thus, assuming these functions are nonzero, $v^\perp$ and $H_\Sigma$ are eigenfunctions of $L_\Sigma$, giving us $n+2$ independent eigenfunctions. With our sign convention, the eigenvalue corresponding to $v^\perp$ is $-\frac12$, and the eigenvalue corresponding to $H_\Sigma$ is $-1$.

Because $L_\Sigma$ has the same symbol as $\Delta_\Sigma$, it has a finite number of negative eigenvalues, at least in the case of compact $\Sigma$. Usually, one defines the index of a critical point of a gradient flow to be the number of negative eigenvalues of the Hessian. However, because translations and dilations do not change the shape of the self-shrinker, we exclude them in this context.

\begin{definition}
  The \emph{index} of a self-shrinker $\Sigma$ is the number of negative eigenvalues of the stability operator $L_\Sigma$, excluding those eigenvalues corresponding to translations and dilations.
\end{definition}

Assuming that $\Sigma$ is not invariant under any translations or dilations, its index is simply $n+2$ less than the usual Morse index.

Under mild assumptions, Colding and Minicozzi show that the only self-shrinkers with index zero are planes, round spheres, and round cylinders \cite{cm12}.

\subsection{Rotationally symmetric self-shrinkers}
If $\Sigma\subset\R^{n+1}$ is a hypersurface with $SO(2)$ rotational symmetry, we can understand it in terms of its cross-section $\Gamma$. We refer the reader to \cite{l16}.

We will use cylindrical coordinates on $\R^{n+1}=\R^2\times\R^{n-1}$ in the sense that $x=(r,\theta,z)$, where $(r,\theta)$ describe the first two components of $x$ in polar coordinates, and $z\in\R^{n-1}$ represents the remaining $n-1$ components of $x$. We let $e_r$ and $e_\theta$ denote the unit vectors in the radial and angular directions, respectively.

\begin{definition}
  We say that a hypersurface $\Sigma\subset\R^{n+1}=\R^2\times\R^{n-1}$ is \emph{$SO(2)$-rotationally symmetric} or simply \emph{rotationally symmetric} if it is invariant under the action of $SO(2)$ on the first two coordinates of $\R^{n+1}$.
\end{definition}

Note that for $n>2$, the rotational symmetry we consider is different from the $SO(n)$ rotational symmetry discussed in papers such as \cite{a92}.

If $\Sigma$ is rotationally symmetric, we let $\Gamma$ denote its $\theta=0$ cross-section, which we also think of as being an $(n-1)$-dimensional hypersurface in the halfspace $\{(r,z)\mid r\ge0,z\in\R^{n-1}\}$. Choosing normal coordinates in $\Gamma$ about $x\in\Gamma$, we obtain a frame $e_1,\dotsc,e_{n-1}$ for $\Gamma$. By rotating this frame along with $\Gamma$ and appending $e_\theta$, we obtain a local frame for $\Sigma$ that is orthonormal at $x$. Since $e_\theta$ is constant on $\Gamma$, we have that $\nabla_ie_\theta=0$ for $1\le i\le n-1$, and we can compute that $\nabla_\theta e_\theta=-\frac1r e_r$. Consequently, we can relate the second fundamental forms and mean curvatures of $\Sigma$ and $\Gamma$.

\begin{proposition}
  We have $a_{i\theta}=0$ for $i=1,\dotsc,n-1$, and $a_{\theta\theta}=-\frac1re_r^\perp$. Consequently,
  \begin{align*}
    H_\Sigma&=H_\Gamma+\frac1re_r^\perp,\\
    \abs{A_\Sigma}^2&=\abs{A_\Gamma}^2+\frac1{r^2}\lvert e_r^\perp\rvert^2.
  \end{align*}
\end{proposition}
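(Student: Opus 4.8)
The plan is to fix a point $x\in\Gamma$ and work in the frame $e_1,\dotsc,e_{n-1},e_\theta$ described above, which is orthonormal at $x$, using the two structural facts recorded there: $\nabla_ie_\theta=0$ for $1\le i\le n-1$ and $\nabla_\theta e_\theta=-\tfrac1re_r$. The components of $A_\Sigma$ involving $e_\theta$ are then immediate. Since $A$ is symmetric and $v\mapsto v^\perp=\langle v,\n\rangle$ is linear, $a_{i\theta}=A(e_i,e_\theta)=(\nabla_ie_\theta)^\perp=0$ for $1\le i\le n-1$, and $a_{\theta\theta}=(\nabla_\theta e_\theta)^\perp=\bigl(-\tfrac1re_r\bigr)^\perp=-\tfrac1re_r^\perp$.

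The heart of the argument is to identify the remaining block $a_{ij}$, $1\le i,j\le n-1$, with the second fundamental form of $\Gamma$. Along the slice $\theta=0$, the surface $\Sigma$ lies in the linear subspace $\R^n=\{(r,z)\}\subset\R^{n+1}$ together with the direction $e_\theta$, which is orthogonal to that subspace; hence $T_x\Sigma=T_x\Gamma\oplus\R e_\theta$ and the unit normal $\n$ to $\Sigma$ at $x$ lies in $\R^n$ and coincides with the unit normal to $\Gamma$ viewed as a hypersurface of $\R^n$. The rotated frame vectors $e_1,\dotsc,e_{n-1}$ restrict along $\Gamma$ to the original frame on $\Gamma\subset\R^n$, so for $1\le i,j\le n-1$ the vector $\nabla_{e_i}e_j$ computed in $\R^{n+1}$ at $x$ agrees with the derivative computed inside the flat subspace $\R^n$, and in particular has no $e_\theta$-component. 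Projecting onto $\n$ gives $a_{ij}=(\nabla_{e_i}e_j)^\perp=A_\Gamma(e_i,e_j)$; consequently $\sum_{i=1}^{n-1}a_{ii}=\tr A_\Gamma=-H_\Gamma$ and $\sum_{i,j=1}^{n-1}a_{ij}^2=\abs{A_\Gamma}^2$.

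With these pieces in hand, the two identities follow by direct substitution into $H_\Sigma=-\tr A_\Sigma$ and $\abs{A_\Sigma}^2=\sum a_{ij}^2$, both computed in the frame that is orthonormal at $x$: the trace splits as $-\bigl(\sum_{i=1}^{n-1}a_{ii}+a_{\theta\theta}\bigr)=H_\Gamma+\tfrac1re_r^\perp$, and the norm splits as $\sum_{i,j=1}^{n-1}a_{ij}^2+a_{\theta\theta}^2=\abs{A_\Gamma}^2+\tfrac1{r^2}\lvert e_r^\perp\rvert^2$, the cross terms $a_{i\theta}$ having already been shown to vanish. I expect the only genuinely delicate point to be the middle paragraph: making precise that at points of $\Gamma$ the normal vector and the tangential block of the second fundamental form of $\Sigma$ really do restrict to those of $\Gamma$ inside the totally geodesic slice $\R^n$, with no correction term; once that is pinned down, the rest is bookkeeping in the orthonormal frame.
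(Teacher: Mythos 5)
Your proof is correct and follows the same route the paper intends: the paper leaves the proposition's proof implicit, but the paragraph immediately preceding it supplies exactly the ingredients you use ($\nabla_ie_\theta=0$, $\nabla_\theta e_\theta=-\tfrac1re_r$, and the orthonormal rotated frame), after which the trace and norm of $A_\Sigma$ split block-diagonally as you describe. Your middle paragraph, checking that the unit normal and the tangential block of $A_\Sigma$ restrict to those of $\Gamma$ inside the totally geodesic slice, makes explicit a step the paper takes for granted, and is a worthwhile addition rather than a deviation.
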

We can thus rewrite the self-shrinker equation in terms of the cross-section $\Gamma$.
\begin{proposition}
  $\Gamma$ is the cross-section of a self-shrinker $\Sigma$ if
  \begin{equation*}
    H_\Gamma=\frac12x^\perp-\frac1re_r^\perp.
  \end{equation*}
\end{proposition}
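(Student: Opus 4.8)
The plan is to reduce the statement to the self-shrinker equation $H_\Sigma=\frac12 x^\perp$ for the ambient surface $\Sigma$, using the relation between the mean curvatures of $\Sigma$ and $\Gamma$ recorded above. Combining Proposition~\ref{prop:shrinkercritical} with the first variation formula for $F$ recalled earlier shows that a hypersurface $\Sigma\subset\R^{n+1}$ is a self-shrinker precisely when $H_\Sigma=\frac12 x^\perp$ holds at every point (the integrand $(-H_\Sigma+\frac12 x^\perp)fe^{-\abs x^2/4}$ vanishes for all compactly supported $f$ iff $-H_\Sigma+\frac12 x^\perp\equiv 0$). So it suffices to deduce this equation on $\Sigma$ from the hypothesis $H_\Gamma=\frac12 x^\perp-\frac1r e_r^\perp$ on $\Gamma$.

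First I would pin down the geometric bookkeeping. Let $\Sigma$ be the rotationally symmetric hypersurface swept out by rotating $\Gamma\subset\{(r,z):r>0\}$ about the axis $r=0$. At a point $p$ of the $\theta=0$ cross-section $\Gamma$, the vector $e_\theta$ is tangent to $\Sigma$, so the unit normal $\n$ of $\Sigma$ at $p$ has vanishing $e_\theta$-component and hence lies in the $\theta=0$ half-plane; fixing orientations consistently, it coincides with the unit normal of $\Gamma$ viewed as a hypersurface in that half-plane. Therefore the scalar projections $x^\perp=\langle x,\n\rangle$ and $e_r^\perp=\langle e_r,\n\rangle$ have the same values whether computed for $\Sigma$ or for $\Gamma$ at points of $\Gamma$. (Here $r>0$ on $\Gamma$, so $\frac1r e_r^\perp$ is well-defined; in the torus case of interest, $\Gamma$ is a closed curve bounded away from the axis.)

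Next I would substitute into the already-established identity $H_\Sigma=H_\Gamma+\frac1r e_r^\perp$:
\[
  H_\Sigma=\Bigl(\tfrac12 x^\perp-\tfrac1r e_r^\perp\Bigr)+\tfrac1r e_r^\perp=\tfrac12 x^\perp
\]
at every point of $\Gamma$. To upgrade this to all of $\Sigma$ I would use $SO(2)$-equivariance: every $p\in\Sigma$ has the form $\rho(q)$ for some rotation $\rho\in SO(2)$ and some $q\in\Gamma$, and since $\rho$ is an isometry of $\R^{n+1}$ fixing the origin and preserving $\Sigma$, it carries $\n(q)$ to $\n(p)$, so $H_\Sigma(p)=H_\Sigma(q)$ and $\langle p,\n(p)\rangle=\langle\rho q,\rho\,\n(q)\rangle=\langle q,\n(q)\rangle$. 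Hence $H_\Sigma=\frac12 x^\perp$ holds identically on $\Sigma$, so $\Sigma$ is a self-shrinker and $\Gamma$ is the cross-section of a self-shrinker.

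The argument is essentially a one-line substitution once the framework is in place, so the step deserving the most care is the first one: confirming that the unit normal of $\Sigma$ along $\Gamma$ really is the unit normal of $\Gamma$ in the half-plane, so the two $\perp$-operations genuinely agree, together with the equivariance check that propagates the equation off the single cross-section to the whole surface. If one prefers not to invoke the converse of the self-shrinker equation, one can instead note that $H_\Sigma=\frac12 x^\perp$ makes the first variation $\frac{d}{ds}\bigr\rvert_{s=0}F(\Sigma_s)$ vanish for all compactly supported $f$, so $\Sigma$ is a critical point of $F$ and hence a self-shrinker by Proposition~\ref{prop:shrinkercritical}.
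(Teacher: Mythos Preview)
Your argument is correct and is exactly the route the paper has in mind: the proposition is stated without proof as an immediate rewriting of the self-shrinker equation $H_\Sigma=\tfrac12 x^\perp$ via the identity $H_\Sigma=H_\Gamma+\tfrac1r e_r^\perp$. You have simply made explicit the two bookkeeping points the paper leaves implicit (that $\n_\Sigma=\n_\Gamma$ along the cross-section, and that rotational symmetry propagates the equation to all of $\Sigma$), so there is nothing to add.
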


We can also write the $F$-functional in terms of $\Gamma$.
\begin{proposition}\label{prop:FGamma}
  If $\Sigma$ is a rotationally symmetric hypersurface with cross-section $\Gamma$, then
  \begin{equation*}
    F(\Sigma)=\frac{2\pi}{(4\pi)^{n/2}}\int_\Gamma re^{-\abs x^2/4}.
  \end{equation*}
\end{proposition}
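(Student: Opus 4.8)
The plan is to compute the integral defining $F(\Sigma)$ directly in cylindrical coordinates, using the rotational symmetry to separate out the angular variable. Parametrize $\Sigma$ by $\Gamma\times[0,2\pi)$ via the map $(p,\theta)\mapsto\rho_\theta(p)$, where $\rho_\theta$ denotes rotation by angle $\theta$ in the first two coordinates of $\R^{n+1}=\R^2\times\R^{n-1}$; this is a bona fide parametrization precisely because $\Sigma$ is $SO(2)$-invariant and $\Gamma$ is its $\theta=0$ cross-section.

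First I would identify the volume form of the induced metric on $\Sigma$ in these coordinates. Fix $p\in\Gamma$ with radial coordinate $r=r(p)$ and choose normal coordinates in $\Gamma$ at $p$, yielding an orthonormal frame $e_1,\dotsc,e_{n-1}$; after rotating by $\theta$ and appending the unit vector $e_\theta$, this is an orthonormal frame for $\Sigma$ at $\rho_\theta(p)$, exactly as recalled in the preliminaries. The coordinate vector field $\partial/\partial\theta$ equals $r\,e_\theta$, so $\langle\partial_\theta,\partial_\theta\rangle=r^2$, while $\partial_\theta$ is orthogonal to the tangent directions coming from $\Gamma$, since the rotated copies of the $e_i$ lie in the rotated meridian halfspace $\{(r,z)\}$, to which $e_\theta$ is perpendicular. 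Hence the induced metric is block diagonal in the $(\Gamma,\theta)$ splitting, and $d\vol_\Sigma=r\,d\theta\wedge d\vol_\Gamma$.

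Next, since $\abs{x}^2=r^2+\abs{z}^2$ depends only on the $\Gamma$-coordinates and not on $\theta$, the weight $e^{-\abs{x}^2/4}$ is constant along the $SO(2)$-orbits. Therefore, by Fubini's theorem,
\[
  \int_\Sigma e^{-\abs{x}^2/4}\,d\vol_\Sigma=\int_\Gamma\left(\int_0^{2\pi}e^{-\abs{x}^2/4}\,r\,d\theta\right)d\vol_\Gamma=2\pi\int_\Gamma r\,e^{-\abs{x}^2/4}\,d\vol_\Gamma.
\]
Dividing both sides by $(4\pi)^{n/2}$ yields the claimed identity.

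There is no serious obstacle here: the computation is essentially Fubini plus the rotation-invariance of $\abs{x}$. The only point requiring a little care is the factorization $d\vol_\Sigma=r\,d\theta\,d\vol_\Gamma$, namely verifying that $e_\theta$ is orthogonal to the rotated $\Gamma$-directions and that the coordinate field $\partial_\theta$ has length $r$ rather than $1$. This is the same fact, repackaged, as the relation $\nabla_\theta e_\theta=-\tfrac1r e_r$ already invoked in the preliminaries, and it is also where the factor of $r$ in the statement originates.
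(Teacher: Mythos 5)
Your proof is correct, and it is the standard argument: the paper states Proposition \ref{prop:FGamma} without proof, and the intended justification is exactly this Fubini computation in cylindrical coordinates, with the factor $r$ coming from $\lvert\partial_\theta\rvert=r$ and the weight $e^{-\abs{x}^2/4}$ being constant on $SO(2)$-orbits. The only caveat worth noting is that for rotationally symmetric hypersurfaces meeting the axis $r=0$ (e.g.\ spheres, which the proposition as stated also covers) the parametrization by $\Gamma\times[0,2\pi)$ degenerates there, but this locus has measure zero, so the identity is unaffected.
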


To simplify our notation, we will let $\sigma$ denote this weight.
\begin{definition}
  Let $\sigma\colon\R_{\ge0}\times\R^{n-1}\to\R$ denote the weight
  \begin{equation*}
    \sigma=\frac{2\pi}{(4\pi)^{n/2}}re^{-\abs x^2/4}.
  \end{equation*}
\end{definition}

\subsection{The stability operator for rotationally symmetric self-shrinkers}
Varying the cross-section $\Gamma$ only yields rotationally symmetric variations of $\Sigma$. To understand the stability operator $L_\Sigma$ in terms of $\Gamma$, we must understand non-rotationally symmetric variations of $\Sigma$ as well. Liu \cite{l16} does so by decomposing normal variations $f\colon\Sigma\to\R$ into their Fourier components. The stability operator $L_\Sigma$ commutes with this Fourier decomposition, so we can decompose $L_\Sigma$ into its Fourier components $L_k$, which are operators acting on functions on $\Gamma$. We summarize these key results from \cite{l16} in this subsection.

We begin by defining the drift Laplacian on $\Gamma$ using the weight $\sigma=\frac{2\pi}{(4\pi)^{n/2}}re^{-\abs x^2/4}$.
\begin{definition}
  Let the \emph{drift Laplacian} $\L_\Gamma$ be defined by
  \begin{equation*}
    \L_\Gamma u=\sigma^{-1}\div_\Gamma\left(\sigma\grad_\Gamma u\right)
  \end{equation*}
  for $u\colon\Gamma\to\R$.
\end{definition}
A quick computation yields the explicit form
\begin{equation*}
  \L_\Gamma u=\Delta_\Gamma u + \left\langle\tfrac1re_r-\tfrac12x,\grad_\Gamma u\right\rangle.
\end{equation*}
\begin{definition}\label{def:lk}
  Let the $k$th Fourier component of the stability operator be
  \begin{equation*}
    L_k=\L_\Gamma+\abs{A_\Gamma}^2+\frac1{r^2}\lvert e_r^\perp\rvert^2+\frac12-\frac{k^2}{r^2},
  \end{equation*}
  acting on functions $u\colon\Gamma\to\R$.
\end{definition}

\begin{proposition}[\cite{l16}]\label{prop:Lfourierdecomposition}
  Let $u\colon\Gamma\to\R$, let $k$ be a nonnegative integer, and let $f\colon\Sigma\to\R$ be defined by $f=u\cos k\theta$, in the sense that $f(r,\theta,z)=u(r,z)\cos k\theta$. Then
  \begin{equation*}
    L_\Sigma f=(L_ku)\cos k\theta,
  \end{equation*}
  and likewise for sine in place of cosine.
\end{proposition}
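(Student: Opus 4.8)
The plan is to compute $L_\Sigma f$ directly in cylindrical coordinates when $f = u(r,z)\cos k\theta$, using the explicit decomposition $L_\Sigma = \L_\Sigma + \abs{A_\Sigma}^2 + \tfrac12$ together with the rotationally symmetric formulas already recorded: $\abs{A_\Sigma}^2 = \abs{A_\Gamma}^2 + \tfrac1{r^2}\lvert e_r^\perp\rvert^2$, and $\L_\Sigma f = \Delta_\Sigma f + \langle \tfrac1r e_r - \tfrac12 x, \grad_\Sigma f\rangle$. The only term that is not immediately rotationally compatible is the genuine Laplacian $\Delta_\Sigma f$, so the heart of the computation is to see how $\Delta_\Sigma$ acts on a function of the form $u(r,z)\cos k\theta$.

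First I would set up the orthonormal frame on $\Sigma$ described in the preliminaries: rotate a normal frame $e_1,\dots,e_{n-1}$ for $\Gamma$ and append $e_\theta$. Since $f$ depends on $\theta$ only through $\cos k\theta$ and on the remaining variables only through $u$, the tangential gradient splits as $\grad_\Sigma f = (\grad_\Gamma u)\cos k\theta + \tfrac1r e_\theta\, u\,\partial_\theta(\cos k\theta)$, i.e.\ the $e_\theta$-component carries a factor $-\tfrac{k}{r}u\sin k\theta$. Now I would expand $\Delta_\Sigma f = \sum_i \nabla_i\nabla_i f + \nabla_\theta\nabla_\theta f$ (minus the connection terms, which vanish at the base point in normal coordinates on $\Gamma$, while for the $\theta$-direction one must keep $\nabla_\theta e_\theta = -\tfrac1r e_r$). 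The $e_i$ directions reproduce $(\Delta_\Gamma u)\cos k\theta$, since $\cos k\theta$ is constant along $\Gamma$. The $e_\theta$ direction gives two contributions: the second $\theta$-derivative of $\cos k\theta$, which produces $-\tfrac{k^2}{r^2}u\cos k\theta$, and the correction from $\nabla_\theta e_\theta = -\tfrac1r e_r$ acting on $f$, which produces a term $+\tfrac1r e_r(u\cos k\theta) = \tfrac1r (\partial_r u)\cos k\theta$ — but this is exactly the $\tfrac1r e_r$ part already present in the drift term, and I would check it combines correctly with $\langle \tfrac1r e_r, \grad_\Sigma f\rangle$ rather than double-counting. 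Here one must be careful: $e_r$ is not tangent to $\Sigma$ in general, so $\langle \tfrac1r e_r - \tfrac12 x, \grad_\Sigma f\rangle$ only sees the tangential part of $e_r$ and of $x$, and the same is true for the cross-section version on $\Gamma$; since both $e_r$ and $x$ lie in the $(r,z)$-plane and $\grad_\Sigma f$'s $e_\theta$-component is orthogonal to that plane, these drift terms pass through the Fourier decomposition cleanly, yielding $\langle \tfrac1r e_r - \tfrac12 x, \grad_\Gamma u\rangle\cos k\theta$.

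Assembling the pieces: $\Delta_\Sigma f + \langle \tfrac1r e_r - \tfrac12 x, \grad_\Sigma f\rangle = \bigl(\L_\Gamma u - \tfrac{k^2}{r^2}u\bigr)\cos k\theta$, and adding $\bigl(\abs{A_\Gamma}^2 + \tfrac1{r^2}\lvert e_r^\perp\rvert^2 + \tfrac12\bigr)u\cos k\theta$ gives precisely $(L_k u)\cos k\theta$ with $L_k$ as in Definition \ref{def:lk}. The argument for $\sin k\theta$ is identical since only $(\cos k\theta)'' = -k^2\cos k\theta$ and the analogous identity for sine were used. The step I expect to be the main obstacle — or at least the one demanding the most care — is bookkeeping the $\nabla_\theta\nabla_\theta f$ term correctly: one needs $\nabla_\theta e_\theta = -\tfrac1r e_r$ and must track that the resulting $e_r$-derivative of $u$ is the radial derivative along $\Gamma$ (not some extrinsic artifact), and confirm it does not collide with the $\tfrac1r e_r$ already built into $\L_\Sigma$. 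Writing everything out in an explicit chart $(s^1,\dots,s^{n-1},\theta)$ on $\Sigma$, where $s$ are coordinates on $\Gamma$, makes this transparent and avoids sign errors.
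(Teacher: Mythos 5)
The paper does not prove this proposition itself (it is quoted from Liu \cite{l16}), so the only question is whether your computation is sound. The direct computation in the rotated frame is indeed the right approach, but there is a concrete error in your starting point which, carried through your final assembly, gives the wrong answer. You take as given that $\L_\Sigma f=\Delta_\Sigma f+\langle\tfrac1re_r-\tfrac12x,\grad_\Sigma f\rangle$. That is the paper's formula for $\L_\Gamma$, not $\L_\Sigma$: the drift Laplacian on $\Sigma$ uses the weight $e^{-\abs x^2/4}$, so $\L_\Sigma f=\Delta_\Sigma f-\tfrac12\langle x,\grad_\Sigma f\rangle$, with no $\tfrac1re_r$ term. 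The $\tfrac1re_r$ drift of $\L_\Gamma$ comes from the extra factor of $r$ in the weight $\sigma$, and in the frame computation it is generated precisely by the connection term you isolate: $-(\nabla_\theta e_\theta)^\top f=+\tfrac1r\langle e_r,\grad_\Gamma u\rangle\cos k\theta$ inside $\Delta_\Sigma f$. So that term is not ``already present in the drift term'' of $\L_\Sigma$ --- it is the \emph{sole source} of the $\tfrac1re_r$ in $\L_\Gamma$. As written, your assembly adds $\Delta_\Sigma f$ (including the connection correction) to $\langle\tfrac1re_r-\tfrac12x,\grad_\Sigma f\rangle$ and therefore counts $\tfrac1r\langle e_r,\grad_\Gamma u\rangle$ twice, yielding $\bigl(\L_\Gamma u+\tfrac1r\langle e_r,\grad_\Gamma u\rangle-\tfrac{k^2}{r^2}u\bigr)\cos k\theta$ rather than $\bigl(\L_\Gamma u-\tfrac{k^2}{r^2}u\bigr)\cos k\theta$. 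You correctly flagged this as the step demanding care, but you then asserted the clean outcome without resolving it, and the premise of your proposed check (that $\L_\Sigma$ has a built-in $\tfrac1re_r$) is false.

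Everything else is fine: $\sum_i\bigl(e_ie_if-(\nabla_ie_i)^\top f\bigr)=(\Delta_\Gamma u)\cos k\theta$ in normal coordinates on $\Gamma$; the pure angular second derivative gives $-\tfrac{k^2}{r^2}u\cos k\theta$; both $x$ and $e_r$ are orthogonal to $e_\theta$, so the drift pairing only sees $\grad_\Gamma u$ and passes through the Fourier mode; and $\abs{A_\Sigma}^2+\tfrac12=\abs{A_\Gamma}^2+\tfrac1{r^2}\lvert e_r^\perp\rvert^2+\tfrac12$ supplies the zeroth-order terms of $L_k$. Once you replace your formula for $\L_\Sigma$ by $\L_\Sigma f=\Delta_\Sigma f-\tfrac12\langle x,\grad_\Sigma f\rangle$ and let the $(\nabla_\theta e_\theta)^\top$ term account for the $\tfrac1re_r$ drift of $\L_\Gamma$, the pieces assemble to $(L_ku)\cos k\theta$ exactly as claimed, and the sine case is identical.
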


Thus, we can determine the eigenvalues and eigenfunctions of the stability operator $L_\Sigma$ by determining the eigenvalues and eigenfunctions of the operators $L_k$ for all $k\ge0$.

From here, Liu \cite{l16} obtains his index results as follows: For each $k$, the eigenfunction corresponding to the least eigenvalue of $L_k$ cannot change sign. Dilation is given by the variation $H_\Sigma$, which is rotationally symmetric, so $H_\Sigma$ is an eigenfunction of $L_0$ with eigenvalue $-1$. Thus, if $H_\Sigma$ changes sign, then there must exist another eigenfunction of $L_0$ with smaller eigenvalue. Likewise, horizontal translation is given by the variations $e_r^\perp\cos\theta$ and $e_r^\perp\sin\theta$ in the first Fourier component, so $e_r^\perp$ is an eigenfunction of $L_1$ with eigenvalue $-\frac12$. Thus, if $e_r^\perp$ changes sign, there must exist another eigenfunction $u$ of $L_1$ with smaller eigenvalue, giving a pair of eigenfunctions $u\cos\theta$ and $u\sin\theta$ of $L$. Liu concludes, under assumptions sufficient to guarantee that $H_\Sigma$ and $e_r^\perp$ change sign, that the index of $\Sigma$ is at least three.

\section{A simple formula for $L_k$ in terms of \texorpdfstring{$\Delta_\Gamma^\sigma$}{∆}}\label{sec:lk}
In this section, we will restrict our attention to immersed rotationally symmetric tori, so $n=2$, the cross-section $\Gamma$ is an immersed closed curve that stays away from the axis of rotation $r=0$, and $\sigma=\frac12re^{-\abs x^2/4}$.

In Definition \ref{def:lk}, we presented Liu's formula for the $k$th Fourier component $L_k$ of the stability operator. In this section, we give a simpler formula for $L_k$ in terms of the Laplacian on the cross-section $\Gamma$ with respect to the metric $g^\sigma=\sigma^2(dr^2+dz^2)$. In addition to its simplicity, this formula is advantageous because we know the eigenvalues of the Laplacian on a closed curve explicitly: they depend only on the length of the curve, which in this case is the entropy $F(\Sigma)$ by Proposition \ref{prop:FGamma}. This eigenvalue information is the key to proving our index bounds in Section \ref{sec:indexbounds} and our entropy bounds in Section \ref{sec:entropybounds}.

\begin{notation}
  By default, we will work with respect to the standard Euclidean metric. However, whenever we place a superscript $\sigma$, we are working with respect to the metric $g^\sigma$ on $\R_{>0}\times\R$. For instance, $\Delta_\Gamma$ denotes the Laplacian on $\Gamma$ with respect to the Euclidean metric, whereas $\Delta_\Gamma^\sigma$ denotes the Laplacian on $\Gamma$ with respect to the metric $g^\sigma$.
\end{notation}

A variation of $\Gamma$ corresponds to a rotationally symmetric variation of $\Sigma$. In particular, if $\Sigma$ is a self-shrinker, then $\Sigma$ is a critical point for the functional $F$, so the curve $\Gamma$ is a critical point for the length functional $l^\sigma$ with respect to the metric $g^\sigma$. In other words, $\Gamma$ is a geodesic in the half-plane with the metric $g^\sigma$. As such, we can assess the stability of $\Gamma$ using the Jacobi operator. We will first need a couple basic quantities with respect to the metric $g^\sigma$.

\begin{definition}
  Let $N$ denote the unit normal vector to $\Gamma$ with respect to the metric $g^\sigma$.
\end{definition}
One can check that $N=\sigma^{-1}\n$, where we recall from Section \ref{sec:preliminaries} that $\n$ is the unit normal with respect to the Euclidean metric.
\begin{definition}
  Let $K$ denote the Gaussian curvature of $\R_{>0}\times\R$ with respect to the metric $\sigma$.
\end{definition}
A computation gives
\begin{equation*}
  K=-\sigma^{-2}\Delta\log\sigma=\sigma^{-2}\left(1+\frac1{r^2}\right).
\end{equation*}
\begin{definition}
  We suggestively denote by $L_0^\sigma$ the Jacobi operator defined by the formula
  \begin{equation*}
    L_0^\sigma=\Delta_\Gamma^\sigma+K=\Delta_\Gamma^\sigma+\sigma^{-2}\left(1+\frac1{r^2}\right).
  \end{equation*}
\end{definition}
Then, either from the theory of Jacobi fields or as a special case of the second variation formula for minimal hypersurfaces, we see that $L_0^\sigma$ gives the Hessian of the length functional $l^\sigma$.
\begin{proposition}
  Let $\Gamma$ be a geodesic in $\R_{>0}\times\R$ with respect to the metric $g^\sigma$. Let $\eta\colon\Gamma\to\R$, and let $\Gamma_s$ be the normal variation of $\Gamma$ corresponding to $\eta$, namely $\Gamma_s=\{x+s\eta N\mid x\in\Gamma\}$. Then
  \begin{equation*}
    \ddtwo s\Bigr\rvert_{s=0} l^\sigma(\Gamma_s)=\int_\Gamma\eta(-L_0^\sigma)\eta\,\sigma.
  \end{equation*}
\end{proposition}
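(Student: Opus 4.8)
The plan is to recognize this as a completely standard fact from Riemannian geometry applied in the two-dimensional setting, where the codimension-one ``minimal hypersurface'' is a geodesic $\Gamma$ in the surface $(\R_{>0}\times\R, g^\sigma)$. The second variation of arclength for a geodesic in a surface of Gaussian curvature $K$ is the classical Jacobi/index form. So the proof is essentially an invocation of that formula together with an identification of the relevant terms.

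More concretely, first I would recall the general second variation of area formula for a minimal hypersurface: for a normal variation with speed $\eta$, one has $\ddtwo s\bigr\rvert_{s=0}l^\sigma(\Gamma_s)=\int_\Gamma\bigl(\lvert\grad_\Gamma^\sigma\eta\rvert^2-(\lvert A^\sigma_\Gamma\rvert^2+\mathrm{Ric}^\sigma(N,N))\eta^2\bigr)\,\sigma$, where all geometric quantities are computed with respect to $g^\sigma$ and $\sigma$ is the induced length element on $\Gamma$. In our situation $\Gamma$ is one-dimensional, so $A^\sigma_\Gamma$ vanishes because $\Gamma$ is a geodesic (its geodesic curvature is zero, which is exactly what Proposition~\ref{prop:shrinkercritical} gives us after translating through Proposition~\ref{prop:FGamma}), and the ambient space $\R_{>0}\times\R$ is two-dimensional so $\mathrm{Ric}^\sigma(N,N)=K$, the Gaussian curvature. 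Thus the bracketed integrand collapses to $\lvert\grad_\Gamma^\sigma\eta\rvert^2-K\eta^2$.

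Next I would integrate by parts on the closed curve $\Gamma$ with respect to the induced measure $\sigma$, using $\int_\Gamma\lvert\grad_\Gamma^\sigma\eta\rvert^2\,\sigma=\int_\Gamma\eta(-\Delta_\Gamma^\sigma\eta)\,\sigma$, which is valid with no boundary terms since $\Gamma$ is closed. Combining, $\ddtwo s\bigr\rvert_{s=0}l^\sigma(\Gamma_s)=\int_\Gamma\eta(-\Delta_\Gamma^\sigma\eta-K\eta)\,\sigma=\int_\Gamma\eta(-L_0^\sigma)\eta\,\sigma$, which is exactly the claim. I would also note that the explicit form $K=\sigma^{-2}(1+1/r^2)$ has already been recorded, so no further computation of the curvature is needed here; if one wanted a self-contained derivation of that, it follows from $K=-\sigma^{-2}\Delta\log\sigma$ for a conformal metric $\sigma^2(dr^2+dz^2)$ together with $\log\sigma=\log\tfrac12+\log r-\tfrac14(r^2+z^2)$, giving $\Delta\log\sigma=-1/r^2-1$.

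The only genuinely substantive point, and hence the main thing to be careful about, is making sure the ``speed'' normalization in the variation matches: $\Gamma_s=\{x+s\eta N\mid x\in\Gamma\}$ is a variation whose velocity field at $s=0$ is $\eta N$ with $N$ the $g^\sigma$-unit normal, so $\eta$ really is the normal speed in the $g^\sigma$ metric, and the standard formula applies verbatim with no rescaling. A secondary point is that, strictly speaking, $\Gamma$ is only immersed, not embedded; but the second variation formula is local along $\Gamma$ and the variation is taken within the fixed ambient surface, so immersedness causes no difficulty. I expect that the author will simply cite the second variation formula for minimal hypersurfaces (or the theory of Jacobi fields along a geodesic) and perform the one-line specialization to $\dim\Gamma=1$ and $\dim(\R_{>0}\times\R)=2$, exactly as the sentence preceding the proposition already anticipates.
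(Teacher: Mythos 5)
Your proof is correct and follows exactly the route the paper takes: the paper states this proposition without proof, remarking only that it follows ``either from the theory of Jacobi fields or as a special case of the second variation formula for minimal hypersurfaces,'' which is precisely the specialization you carry out (vanishing second fundamental form for a geodesic, $\mathrm{Ric}(N,N)=K$ in a surface, and integration by parts on the closed curve). Your extra care about the normalization of the variation speed and the verification of $K=\sigma^{-2}(1+1/r^2)$ is consistent with what the paper records elsewhere.
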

As suggested by the notation, we have a simple relationship between $L_0$ and $L_0^\sigma$.
\begin{proposition}\label{prop:l0sigma}
  The stability operators $L_0$ and $L_0^\sigma$ are related by the formula
  \begin{equation*}
    L_0=\sigma L_0^\sigma\sigma=\sigma\Delta_\Gamma^\sigma\sigma+1+\frac1{r^2}.
  \end{equation*}
\end{proposition}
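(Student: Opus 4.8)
The plan is to verify the identity $L_0 = \sigma L_0^\sigma \sigma$ by a direct computation, comparing the explicit form of $L_0$ from Definition \ref{def:lk} (with $k=0$, $n=2$) against the right-hand side. Recall that with $n=2$ we have $\sigma = \tfrac12 r e^{-\abs{x}^2/4}$, and Liu's formula gives
\begin{equation*}
  L_0 = \L_\Gamma + \abs{A_\Gamma}^2 + \tfrac1{r^2}\abs{e_r^\perp}^2 + \tfrac12.
\end{equation*}
So the content of the proposition is twofold: first, that conjugating the $\sigma$-Laplacian by $\sigma$ produces the drift Laplacian $\L_\Gamma$ plus a potential, and second, that the leftover potential terms match $\abs{A_\Gamma}^2 + \tfrac1{r^2}\abs{e_r^\perp}^2 + \tfrac12$ exactly via the curvature $K$ and the self-shrinker equation.

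First I would establish the operator-conjugation identity $\sigma \Delta_\Gamma^\sigma \sigma = \L_\Gamma + V$ for an explicit potential $V$. The key inputs are the conformal change of metric $g^\sigma = \sigma^2(dr^2+dz^2)$ and the relation $N = \sigma^{-1}\n$ from the preceding discussion. Since $\Gamma$ is one-dimensional, the Laplacian transforms simply under conformal change: if $s$ is Euclidean arclength and $s^\sigma = \sigma\, s$ is $g^\sigma$-arclength along $\Gamma$, then $\Delta_\Gamma^\sigma = \sigma^{-2}\Delta_\Gamma$ as operators, because in one dimension the conformal factor only rescales the single coordinate. Thus $\sigma \Delta_\Gamma^\sigma \sigma = \sigma \cdot \sigma^{-2}\Delta_\Gamma(\sigma\,\cdot\,) = \sigma^{-1}\Delta_\Gamma(\sigma\,\cdot\,)$. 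Expanding $\sigma^{-1}\Delta_\Gamma(\sigma u) = \Delta_\Gamma u + 2\langle \grad_\Gamma\log\sigma, \grad_\Gamma u\rangle + (\sigma^{-1}\Delta_\Gamma\sigma)u$, I recognize the first-order term: since $\log\sigma = \log\tfrac12 + \log r - \abs{x}^2/4$, we get $\grad\log\sigma = \tfrac1r e_r - \tfrac12 x$ in the ambient half-plane, and projecting tangentially gives exactly the drift term $\langle \tfrac1r e_r - \tfrac12 x, \grad_\Gamma u\rangle$ appearing in the explicit form of $\L_\Gamma$ — but wait, the conjugation produces $2\langle\grad_\Gamma\log\sigma,\grad_\Gamma u\rangle$, twice what we want. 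This factor-of-two discrepancy is the main obstacle, and resolving it is the crux: the point is that $\grad_\Gamma(\log\sigma)$ must be split into its component along $\Gamma$ versus the full ambient gradient, and the potential term $\sigma^{-1}\Delta_\Gamma\sigma$ absorbs part of $\abs{\grad\log\sigma}^2$; careful bookkeeping using $\Delta_\Gamma\sigma = \Delta_\Sigma$-type identities (or directly, $\Delta_\Gamma\log\sigma = \Delta\log\sigma - (\text{normal Hessian of }\log\sigma) - H_\Gamma\langle\grad\log\sigma,\n\rangle$, using that $\Gamma$ is a geodesic in $g^\sigma$ to relate $H_\Gamma$ to $\langle\grad\log\sigma,\n\rangle$) reconciles everything. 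Concretely, $\Gamma$ being a $g^\sigma$-geodesic means its Euclidean geodesic curvature satisfies $H_\Gamma = \langle \grad\log\sigma, \n\rangle = \tfrac1r e_r^\perp - \tfrac12 x^\perp$, which is precisely the cross-section self-shrinker equation up to sign, and this is what converts the spurious first-order terms into the correct potential.

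Having pinned down $V$, the remaining step is to check $V = \abs{A_\Gamma}^2 + \tfrac1{r^2}\abs{e_r^\perp}^2 + \tfrac12 - 1 - \tfrac1{r^2}$; equivalently, since $L_0^\sigma = \Delta_\Gamma^\sigma + K$ and $\sigma K \sigma = \sigma^{-1}\cdot\sigma^2 K\cdot\sigma^{-1}\cdot\sigma^2$... more cleanly: $\sigma L_0^\sigma\sigma = \sigma\Delta_\Gamma^\sigma\sigma + \sigma^2 K = \L_\Gamma + V + (1 + \tfrac1{r^2})$ using the given formula $\sigma^2 K = 1 + \tfrac1{r^2}$. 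Matching against $L_0 = \L_\Gamma + \abs{A_\Gamma}^2 + \tfrac1{r^2}\abs{e_r^\perp}^2 + \tfrac12$ reduces the whole proposition to the scalar identity $V + 1 + \tfrac1{r^2} = \abs{A_\Gamma}^2 + \tfrac1{r^2}\abs{e_r^\perp}^2 + \tfrac12$. I would verify this pointwise using the self-shrinker equation $H_\Gamma = \tfrac12 x^\perp - \tfrac1r e_r^\perp$ to eliminate $H_\Gamma$, and the decomposition $\abs{x}^2 = \abs{x^\top}^2 + \abs{x^\perp}^2$ together with $e_r = (e_r)^\top + e_r^\perp\n$; since $\Gamma$ is a curve, $\abs{A_\Gamma}^2 = H_\Gamma^2$, which makes the algebra tractable. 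The expected payoff is that all the $x^\perp$, $e_r^\perp$, and Hessian-of-$\log\sigma$ cross-terms cancel, leaving the stated formula. The final displayed equality $\sigma\Delta_\Gamma^\sigma\sigma + 1 + \tfrac1{r^2}$ then follows immediately by substituting $\sigma^2 K = 1 + \tfrac1{r^2}$ back in.
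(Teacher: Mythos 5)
Your strategy (direct computation comparing Liu's formula for $L_0$ with the conjugated operator $\sigma\Delta_\Gamma^\sigma\sigma$) is viable and genuinely different from the paper's proof, which avoids all local computation: the paper observes that $L_0$ and $\sigma L_0^\sigma\sigma$ both represent the Hessian of the same functional $F=l^\sigma$ under the change of variables $u\n=\eta N$, $\eta=\sigma u$, and concludes $L_0=\sigma L_0^\sigma\sigma$ from the equality and symmetry of the two quadratic forms. However, as written your computation contains a concrete error that is exactly the source of the ``factor-of-two discrepancy'' you flag. The claim $\Delta_\Gamma^\sigma=\sigma^{-2}\Delta_\Gamma$ is false in dimension one (it is the \emph{two}-dimensional Laplacian that is conformally covariant in this way). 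Since $\sigma$ is not constant along $\Gamma$, the correct transformation is
\begin{equation*}
  \Delta_\Gamma^\sigma u=\left(\sigma^{-1}\partial_s\right)^2u=\sigma^{-1}\partial_s\left(\sigma^{-1}\partial_su\right)=\sigma^{-2}u_{ss}-\sigma^{-3}\sigma_s\,u_s,
\end{equation*}
where $s$ is Euclidean arclength; the extra first-order term cannot be dropped. With the correct formula one finds
\begin{equation*}
  \sigma\Delta_\Gamma^\sigma(\sigma u)=\partial_s\bigl(\sigma^{-1}\partial_s(\sigma u)\bigr)=u_{ss}+(\log\sigma)_s\,u_s+(\log\sigma)_{ss}\,u=\L_\Gamma u+(\Delta_\Gamma\log\sigma)\,u,
\end{equation*}
so the drift term comes out with coefficient one on the nose and there is no discrepancy to reconcile. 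Your proposed fix --- absorbing the spurious factor of two into the potential via ``careful bookkeeping'' --- would not succeed, because a first-order error in an operator identity cannot be repaired by adjusting a zeroth-order term.

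Once the transformation law is corrected, the rest of your plan does go through: the proposition reduces to the pointwise identity $\Delta_\Gamma\log\sigma+\sigma^2K=\abs{A_\Gamma}^2+\tfrac1{r^2}\lvert e_r^\perp\rvert^2+\tfrac12$, which follows from $\Delta\log\sigma=-1-\tfrac1{r^2}$, the splitting $\Delta_\Gamma f=\Delta f-\mathrm{Hess}\,f(\n,\n)-H_\Gamma\langle\grad f,\n\rangle$ with $\mathrm{Hess}\log\sigma(\n,\n)=-\tfrac1{r^2}\lvert e_r^\perp\rvert^2-\tfrac12$, and the self-shrinker equation in the form $\langle\grad\log\sigma,\n\rangle=-H_\Gamma$ (which also yields $\abs{A_\Gamma}^2=H_\Gamma^2$ for a curve), exactly as you anticipate. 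The trade-off between the two routes is clear: the direct computation makes the role of the curvature $K$ and the self-shrinker equation explicit but is sign- and convention-sensitive, while the paper's variational argument is essentially computation-free but relies on already knowing that both operators are second-variation operators for the same functional.
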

\begin{proof}
    Rather than comparing this formula for $L_0$ with the formula for $L_0$ in Definition \ref{def:lk}, we will instead observe that $L_0$ and $L_0^\sigma$ are representing the same symmetric bilinear operator, namely the Hessian of $l^\sigma=F$ with respect to variations of $\Gamma$. This proposition is essentially a change of basis formula transforming between writing normal variations as $u\n$ and writing normal variations as $\eta N$. 

    More explicitly, we set $u=\sigma^{-1}\eta\colon\Gamma\to\R$. With this identification, $u\n=\eta N$. Letting $f\colon\Sigma\to\R$ be the rotationally symmetric variation $f=u$, we see that the variations
    \begin{equation*}
      \Sigma_s=\{x+sf\n\mid x\in\Sigma\}
    \end{equation*}
    have cross-sections
    \begin{equation*}
      \Gamma_s=\{x+s\eta N\mid x\in\Gamma\}.
    \end{equation*}
    Consequently,
    \begin{multline*}
      \int_\Gamma u(-L_0)u\,\sigma=\frac1{4\pi}\int_\Sigma f(-L_\Sigma)f\,e^{-\abs x^2/4}=\ddtwo s\Bigr\rvert_{s=0}F(\Sigma_s)\\=\ddtwo s\Bigr\rvert_{s=0} l^\sigma(\Gamma_s)=\int_\Gamma\eta(-L_0^\sigma)\eta\,\sigma=\int_\Gamma u(-\sigma L_0^\sigma\sigma)u\,\sigma.
    \end{multline*}
    The Hessian is symmetric, so the above equality is sufficient to conclude that $L_0=\sigma L_0^\sigma\sigma$, as desired.
\end{proof}
We immediately obtain a nice formula for $L_k$.
\begin{theorem}\label{thm:lk}
  Let $\Sigma\subset\R^3$ be an immersed rotationally symmetric self-shrinking torus, and let $\Gamma$ be its cross-section. Let $L_k$ be the $k$th Fourier component of stability operator, as given in Definition \ref{def:lk} and Proposition \ref{prop:Lfourierdecomposition} following \cite{l16}. Let $\Delta_\Gamma^\sigma$ denote the Laplacian on $\Gamma$ with respect to the conformally changed metric $g^\sigma=\sigma^2(dr^2+dz^2)$ with $\sigma=\frac 12re^{-\abs x^2/4}$.

  These two operators are related by the formula
  \begin{equation*}
    L_k=\sigma\Delta_\Gamma^\sigma\sigma+1+\frac{1-k^2}{r^2}.
  \end{equation*}
\end{theorem}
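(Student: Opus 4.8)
The plan is to deduce this as an immediate consequence of Proposition~\ref{prop:l0sigma} together with the structure of Liu's formula in Definition~\ref{def:lk}, so that essentially no new computation is required.

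First I would observe that the Fourier index $k$ enters the formula for $L_k$ in Definition~\ref{def:lk} only through the single term $-k^2/r^2$: every other summand ($\L_\Gamma$, $\abs{A_\Gamma}^2$, $\frac1{r^2}\abs{e_r^\perp}^2$, and $\frac12$) is independent of $k$. Hence $L_k = L_0 - \frac{k^2}{r^2}$, where $L_0$ is the operator obtained by setting $k=0$, which is precisely the $L_0$ appearing in Proposition~\ref{prop:l0sigma}. Then I would substitute the formula $L_0=\sigma\Delta_\Gamma^\sigma\sigma+1+\frac1{r^2}$ from that proposition and collect the $1/r^2$ terms:
\begin{equation*}
  L_k=\sigma\Delta_\Gamma^\sigma\sigma+1+\frac1{r^2}-\frac{k^2}{r^2}=\sigma\Delta_\Gamma^\sigma\sigma+1+\frac{1-k^2}{r^2},
\end{equation*}
which is the claimed identity.

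The real content has already been absorbed into Proposition~\ref{prop:l0sigma}, whose proof identifies $L_0$ and $\sigma L_0^\sigma\sigma$ as two matrix representations of the same Hessian (the change of basis between normal variations written as $u\n$ and as $\eta N$), rather than matching Liu's formula to the conformal-Laplacian formula by hand. Consequently I expect no genuine obstacle in the theorem itself; it is a one-line algebraic consequence. If an independent verification were desired, one could instead check directly that $\L_\Gamma+\abs{A_\Gamma}^2+\frac1{r^2}\abs{e_r^\perp}^2+\frac12=\sigma\Delta_\Gamma^\sigma\sigma+1+\frac1{r^2}$, using the conformal transformation law $\Delta_\Gamma^\sigma=\sigma^{-2}\Delta_\Gamma$ for the Laplacian on a curve together with the self-shrinker equation $H_\Gamma=\frac12x^\perp-\frac1re_r^\perp$ to rewrite the curvature and drift terms; but this is exactly the computation Proposition~\ref{prop:l0sigma} was arranged to avoid, so I would not pursue it.
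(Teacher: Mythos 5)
Your argument is exactly the paper's proof: observe from Definition \ref{def:lk} that $L_k=L_0-\frac{k^2}{r^2}$ and then substitute the formula for $L_0$ from Proposition \ref{prop:l0sigma}. Correct and identical in approach; no further comment needed.
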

\begin{proof}
    From the formula given for $L_k$ in Definition \ref{def:lk}, we have $L_k=L_0-\frac{k^2}{r^2}$. The result then follows from Proposition \ref{prop:l0sigma}.
\end{proof}
  
Proposition \ref{prop:l0sigma} and Theorem \ref{thm:lk} naturally give rise to the definition of $L_k^\sigma$.
\begin{definition}\label{def:lksigma}
  For nonnegative integer $k$, let
  \begin{equation*}
    L_k^\sigma=\sigma^{-1}L_k\sigma^{-1}=\Delta_\Gamma^\sigma+\sigma^{-2}\left(1+\frac{1-k^2}{r^2}\right).
  \end{equation*}
\end{definition}

\section{Index bounds}\label{sec:indexbounds}
As discussed in the preliminaries, to find the index of $\Sigma$, we must find the number of negative eigenvalues of each $L_k$ for $k\ge0$. Because $\sigma>0$, the number of negative eigenvalues of $L_k$ is the same as the number of negative eigenvalues of $L_k^\sigma=\sigma^{-1}L_k\sigma^{-1}$, but the operator $L_k^\sigma$ is easier to analyze.

This section is structured as follows. In Subsection \ref{subsec:delta}, we use the spectrum of $\Delta_\Gamma^\sigma$ to give us information about the spectrum of $L_k^\sigma$. This information gives us bounds on the number of negative eigenvalues of $L_k^\sigma$, or, equivalently, of $L_k$, in terms of the cross-section $\Gamma$. We write down these bounds, as well as the resulting bounds on the index of $\Sigma$, in Subsection \ref{subsec:fine}. In particular, we give our stronger index bounds in Theorem \ref{thm:finebound}. Then, in Subsection \ref{subsec:coarse}, we write down weaker bounds that depend only on the entropy of $\Sigma$, its minimum distance to the axis of rotation, and its maximum distance to the origin, proving Theorem \ref{thm:coarsebound}. Finally, in \ref{subsec:indexbounds}, we illustrate the results of Subsection \ref{subsec:fine} by applying them to the Angenent torus.

\subsection{Negative eigenvalues of $L_k^\sigma$}\label{subsec:delta}
The first term of $L_k^\sigma$ is $\Delta_\Gamma^\sigma$, which is the Laplacian of a one-dimensional manifold, so its spectrum is a classical result.
\begin{proposition}\label{prop:deltaeigs}
  Let $l=l^\sigma(\Gamma)=F(\Sigma)$. Let $s\colon[0,l]\to\Gamma$ be a parametrization of $\Gamma$ with respect to $\sigma$-arclength. Then the eigenfunctions of $\Delta_\Gamma^\sigma$ are given by
  \begin{align*}
    -\Delta_\Gamma^\sigma1&=0,&-\Delta_\Gamma^\sigma\cos\left(j\tfrac{2\pi}ls\right)&=\left(j\tfrac{2\pi}l\right)^2\cos\left(\tfrac{2\pi}ls\right),\\ &&-\Delta_\Gamma^\sigma\sin\left(j\tfrac{2\pi}ls\right)&=\left(j\tfrac{2\pi}l\right)^2\sin\left(\tfrac{2\pi}ls\right).
  \end{align*}
  Thus, the spectrum of $\Delta_\Gamma^\sigma$ is
  \begin{align*}
    \lambda_0&=0,&\lambda_{2j-1}=\lambda_{2j}&=\left(\tfrac{2\pi}l\right)^2j^2,\qquad j\ge1.
  \end{align*}
\end{proposition}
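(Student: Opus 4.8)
The plan is to observe that $\Delta_\Gamma^\sigma$ is nothing more than the ordinary one-dimensional Laplacian $d^2/ds^2$ once we parametrize $\Gamma$ by $\sigma$-arclength, so that the computation of its spectrum reduces to the classical Fourier analysis of periodic functions on a circle of circumference $l$. First I would recall the general fact that for a closed Riemannian $1$-manifold, choosing an arclength parametrization $s$ makes the metric $ds^2$ and hence the Laplace--Beltrami operator becomes $\Delta u = d^2u/ds^2$. In our setting, ``arclength'' means arclength with respect to the metric $g^\sigma$, so $s\colon[0,l]\to\Gamma$ with $l=l^\sigma(\Gamma)$, and by Proposition \ref{prop:FGamma} together with the identification $l^\sigma(\Gamma)=F(\Sigma)$ already noted in the excerpt, this $l$ is the entropy. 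The point to check is simply that the conformal factor $\sigma$ has been completely absorbed into the choice of parameter: writing $\Gamma$ as a curve in the $(r,z)$ half-plane, the $g^\sigma$-arclength element is $\sigma\,ds_{\mathrm{Euc}}$, and reparametrizing by this quantity gives $\Delta_\Gamma^\sigma = d^2/ds^2$ directly.

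Next I would verify that the proposed functions are eigenfunctions by direct differentiation: $-d^2/ds^2$ applied to $1$ gives $0$, and applied to $\cos(j\tfrac{2\pi}{l}s)$ or $\sin(j\tfrac{2\pi}{l}s)$ gives $(j\tfrac{2\pi}{l})^2$ times the same function. These functions are genuinely well-defined on $\Gamma$ (rather than merely on $[0,l]$) precisely because they are periodic with period $l$, i.e.\ invariant under $s\mapsto s+l$, which matches the identification of endpoints in the arclength parametrization of the closed curve. Finally, to see that this list is the \emph{complete} spectrum, I would invoke the standard completeness of $\{1,\cos(j\tfrac{2\pi}{l}s),\sin(j\tfrac{2\pi}{l}s)\}_{j\ge1}$ as an orthogonal basis of $L^2$ of the circle of circumference $l$ (equivalently, the elliptic operator $-d^2/ds^2$ on a compact manifold has discrete spectrum with eigenspaces spanning $L^2$, and we have exhibited eigenfunctions exhausting all the eigenvalues). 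Collecting the eigenvalues in increasing order with multiplicity then gives $\lambda_0=0$ and $\lambda_{2j-1}=\lambda_{2j}=(\tfrac{2\pi}{l})^2 j^2$ for $j\ge1$, with the doubling reflecting the two-dimensional $\{\cos,\sin\}$ eigenspace at each positive level.

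There is no real obstacle here; the only thing requiring care is the bookkeeping that $\sigma$-arclength is the right parameter and that the resulting period is exactly $l = l^\sigma(\Gamma) = F(\Sigma)$ rather than, say, the Euclidean length of $\Gamma$. This is where Proposition \ref{prop:FGamma} is used, and it is worth stating explicitly that this is the reason the entire subsequent analysis will depend on $\Gamma$ only through the single number $F(\Sigma)$.
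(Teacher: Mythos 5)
Your argument is correct and is precisely the standard one the paper has in mind: the paper offers no proof at all, simply calling the spectrum of the Laplacian on a closed one-dimensional manifold ``a classical result,'' and your reduction to $-d^2/ds^2$ on a circle of circumference $l=l^\sigma(\Gamma)=F(\Sigma)$ via $\sigma$-arclength, followed by classical Fourier completeness, fills that in correctly. No issues.
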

The second term of $L_k^\sigma$ is $\sigma^{-1}\left(1+\frac{1-k^2}{r^2}\right)$, which is a bounded zeroth order operator. Thus, we can obtain upper and lower bounds on the number of negative eigenvalues of $L_k^\sigma$.
\begin{proposition}\label{prop:maxeigs}
  Let $J$ be the smallest nonnegative integer such that
  \begin{equation*}
    J^2\ge\left(\frac l{2\pi}\right)^2\max_\Gamma\sigma^{-2}\left(1+\frac{1-k^2}{r^2}\right).
  \end{equation*}
  If $J=0$, then $L_k^\sigma$ has no negative eigenvalues. If $J\ge1$, then $L_k^\sigma$ has at most $2J-1$ negative eigenvalues.
\end{proposition}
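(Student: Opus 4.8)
The plan is to compare $L_k^\sigma = \Delta_\Gamma^\sigma + V$, where $V = \sigma^{-2}(1 + \tfrac{1-k^2}{r^2})$ is a bounded function on $\Gamma$, against $\Delta_\Gamma^\sigma + \max_\Gamma V$ via the min-max characterization of eigenvalues. Since $\max_\Gamma V$ is just a constant, the operator $\Delta_\Gamma^\sigma + \max_\Gamma V$ has the same eigenfunctions as $\Delta_\Gamma^\sigma$, with eigenvalues shifted down by $\max_\Gamma V$; by Proposition \ref{prop:deltaeigs} these are $\max_\Gamma V - \lambda_m$ where $\lambda_0 = 0$ and $\lambda_{2j-1} = \lambda_{2j} = (2\pi j/l)^2$. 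The key monotonicity input is that $V \le \max_\Gamma V$ pointwise, so by the variational principle the $m$th eigenvalue of $L_k^\sigma$ is at most the $m$th eigenvalue of $\Delta_\Gamma^\sigma + \max_\Gamma V$. Wait—I want the reverse inequality for counting negative eigenvalues, so let me be careful about the sign convention: with $-L_k^\sigma$ as the operator whose eigenvalues we count, a pointwise \emph{smaller} potential $V$ means $-L_k^\sigma = -\Delta_\Gamma^\sigma - V \ge -\Delta_\Gamma^\sigma - \max_\Gamma V$ as quadratic forms, hence the eigenvalues of $-L_k^\sigma$ dominate those of $-\Delta_\Gamma^\sigma - \max_\Gamma V$. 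So $L_k^\sigma$ has \emph{at most} as many negative eigenvalues as $\Delta_\Gamma^\sigma + \max_\Gamma V$.

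So the steps are: first, invoke Proposition \ref{prop:deltaeigs} to record the spectrum of $\Delta_\Gamma^\sigma$. Second, observe that the spectrum of $\Delta_\Gamma^\sigma + \max_\Gamma V$ is obtained by subtracting $\max_\Gamma V$ from each, so this operator has a negative eigenvalue in its $m$th slot precisely when $\lambda_m > \max_\Gamma V$. Third, count: $\lambda_0 = 0 < \max_\Gamma V$ always contributes a negative eigenvalue unless $\max_\Gamma V \le 0$; and for $j \ge 1$, the pair $\lambda_{2j-1} = \lambda_{2j} = (2\pi/l)^2 j^2$ fails to be negative (i.e. becomes $\ge 0$) exactly when $j^2 \ge (l/2\pi)^2 \max_\Gamma V$. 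Fourth, translate this into the definition of $J$: $J$ is the smallest nonnegative integer with $J^2 \ge (l/2\pi)^2 \max_\Gamma V$. If $J = 0$ then $\max_\Gamma V \le 0$, so $\Delta_\Gamma^\sigma + \max_\Gamma V \ge 0$ and hence has no negative eigenvalues, and therefore neither does $L_k^\sigma$. If $J \ge 1$, then the negative eigenvalues of $\Delta_\Gamma^\sigma + \max_\Gamma V$ occur exactly in slots $0, 1, \dotsc, 2(J-1)$, which is $2J - 1$ of them, so $L_k^\sigma$ has at most $2J-1$ negative eigenvalues. Finally, recall from the start of Section \ref{sec:indexbounds} that $L_k$ and $L_k^\sigma = \sigma^{-1}L_k\sigma^{-1}$ have the same number of negative eigenvalues since $\sigma > 0$, completing the proof.

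The only subtlety—not really an obstacle—is bookkeeping the index-counting against the sign convention: one must confirm that a pointwise potential inequality $V \le \max_\Gamma V$ yields a quadratic-form inequality $-L_k^\sigma \ge -\Delta_\Gamma^\sigma - \max_\Gamma V$, and then that domination of eigenvalues in the min-max ordering implies domination of the \emph{number} below zero. This is standard once stated cleanly: if $A \ge B$ as self-adjoint operators with discrete spectrum, then $\mu_m(A) \ge \mu_m(B)$ for all $m$, so $\#\{m : \mu_m(A) < 0\} \le \#\{m : \mu_m(B) < 0\}$. The rest is the explicit eigenvalue arithmetic of Proposition \ref{prop:deltaeigs}, which is elementary since $\Gamma$ is one-dimensional.
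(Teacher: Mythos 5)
Your proposal is correct and uses essentially the same argument as the paper: the pointwise bound $\sigma^{-2}\bigl(1+\frac{1-k^2}{r^2}\bigr)\le\max_\Gamma\sigma^{-2}\bigl(1+\frac{1-k^2}{r^2}\bigr)$ combined with the explicit spectrum of $\Delta_\Gamma^\sigma$ from Proposition \ref{prop:deltaeigs} and the min-max principle. The paper packages this by exhibiting the codimension-$(2J-1)$ subspace orthogonal to the first $2J-1$ eigenfunctions and showing the quadratic form of $-L_k^\sigma$ is nonnegative there, which is the same comparison you make via operator monotonicity.
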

\begin{proof}
    If $J=0$, let $\eta$ be arbitrary. If $J\ge1$, let $\eta$ be orthogonal to the first $2J-1$ eigenfunctions of $\Delta_\Gamma^\sigma$ with respect to the $L^2(\Gamma,\sigma)$ pairing. On this orthogonal complement, the smallest eigenvalue of $\Delta_\Gamma^\sigma$ is $\left(\frac{2\pi}l\right)J^2$. Thus,
    \begin{multline*}
      \int_\Gamma\eta(-L_k^\sigma)\eta\,\sigma=\int_\Gamma\left(\eta(-\Delta_\Gamma^\sigma)\eta-\sigma^{-2}\left(1+\frac{1-k^2}{r^2}\right)\eta^2\right)\,\sigma\\\ge\int_\Gamma\left(\left(\frac{2\pi}l\right)^2J^2-\sigma^{-2}\left(1+\frac{1-k^2}{r^2}\right)\right)\eta^2\,\sigma\ge0.
    \end{multline*}
    Thus, $L_k^\sigma$ has at most $2J-1$ negative eigenfunctions if $J\ge1$, and $L_k^\sigma$ has no negative eigenfunctions if $J=0$.
\end{proof}

\begin{proposition}\label{prop:mineigs}
  Let $k^2\le1+r_{\min}^2$, so $1+\frac{1-k^2}{r^2}\ge0$ for all points on $\Gamma$. Let $J$ be the largest integer such that
  \begin{equation*}
    J^2\le\left(\frac l{2\pi}\right)^2\min_\Gamma\sigma^{-2}\left(1+\frac{1-k^2}{r^2}\right)
  \end{equation*}
  and such that the inequality $J^2<\left(\frac l{2\pi}\right)^2\sigma^{-2}\left(1+\frac{1-k^2}{r^2}\right)$ is strict somewhere on $\Gamma$.
  
  Then $L_k^\sigma$ has at least $2J+1$ negative eigenvalues.
\end{proposition}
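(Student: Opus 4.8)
The plan is to prove this by the min-max (Courant--Fischer) principle, exhibiting an explicit $(2J+1)$-dimensional subspace of $C^\infty(\Gamma)$ on which the quadratic form $\eta\mapsto\int_\Gamma\eta(-L_k^\sigma)\eta\,\sigma$ is negative definite. Since $\Gamma$ is a compact curve and $L_k^\sigma=\Delta_\Gamma^\sigma+\sigma^{-2}\left(1+\frac{1-k^2}{r^2}\right)$ differs from $\Delta_\Gamma^\sigma$ only by a bounded zeroth-order term, the operator $L_k^\sigma$ has discrete spectrum bounded below, and the number of its negative eigenvalues equals the maximal dimension of a subspace on which that quadratic form is negative definite; so producing such a subspace of dimension $2J+1$ suffices. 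This is exactly the lower-bound counterpart of the argument proving Proposition \ref{prop:maxeigs}, with the roles of ``orthogonal complement'' and ``span'' interchanged. At the outset I would also record that the hypothesis $k^2\le1+r_{\min}^2$ makes $\sigma^{-2}\left(1+\frac{1-k^2}{r^2}\right)$ nonnegative on all of $\Gamma$, so the minimum appearing in the definition of $J$ is nonnegative and $J\ge0$ is meaningful.

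\textbf{The test subspace.}
I would take $V=\operatorname{span}\bigl\{1,\ \cos\!\left(j\tfrac{2\pi}ls\right),\ \sin\!\left(j\tfrac{2\pi}ls\right):1\le j\le J\bigr\}$, where $s$ is the $\sigma$-arclength parametrization of $\Gamma$. By Proposition \ref{prop:deltaeigs} these are $2J+1$ mutually $L^2(\Gamma,\sigma)$-orthogonal eigenfunctions of $\Delta_\Gamma^\sigma$, all with eigenvalue at most $\left(\tfrac{2\pi}l\right)^2J^2$, so every $\eta\in V$ satisfies $\int_\Gamma\eta(-\Delta_\Gamma^\sigma)\eta\,\sigma\le\left(\tfrac{2\pi}l\right)^2J^2\int_\Gamma\eta^2\,\sigma$. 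Combining this with the defining inequality $J^2\le\left(\tfrac l{2\pi}\right)^2\min_\Gamma\sigma^{-2}\left(1+\tfrac{1-k^2}{r^2}\right)$ of $J$ gives, for $\eta\in V$,
\begin{multline*}
  \int_\Gamma\eta(-L_k^\sigma)\eta\,\sigma=\int_\Gamma\eta(-\Delta_\Gamma^\sigma)\eta\,\sigma-\int_\Gamma\sigma^{-2}\left(1+\frac{1-k^2}{r^2}\right)\eta^2\,\sigma\\\le\int_\Gamma\left(\left(\frac{2\pi}l\right)^2J^2-\sigma^{-2}\left(1+\frac{1-k^2}{r^2}\right)\right)\eta^2\,\sigma\le0,
\end{multline*}
with the bracketed integrand pointwise nonpositive on all of $\Gamma$.

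\textbf{Upgrading to strict negativity.}
The strictness clause in the hypothesis supplies a nonempty open arc $U\subseteq\Gamma$ on which $\left(\tfrac{2\pi}l\right)^2J^2<\sigma^{-2}\left(1+\tfrac{1-k^2}{r^2}\right)$, so on $U$ the bracketed integrand is strictly negative wherever $\eta\ne0$. A nonzero $\eta\in V$ is a trigonometric polynomial of degree at most $J$ in $\tfrac{2\pi}ls$, hence real-analytic and not identically zero, so it has only finitely many zeros on $\Gamma$ and in particular cannot vanish on $U$. Therefore $\int_\Gamma\eta(-L_k^\sigma)\eta\,\sigma<0$ for every nonzero $\eta\in V$; since $V$ is finite-dimensional the Rayleigh quotient attains its maximum over the unit sphere of $V$, and that maximum is strictly negative. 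Thus the quadratic form is negative definite on the $(2J+1)$-dimensional space $V$, and $L_k^\sigma$ has at least $2J+1$ negative eigenvalues.

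\textbf{Expected obstacle.}
The only genuinely delicate step is this last strictness upgrade --- ruling out that a nonzero element of $V$ is supported away from $U$ --- and I expect it to be the main obstacle, though it is resolved cleanly by the observation that elements of $V$ are trigonometric polynomials of bounded degree and hence have finite zero sets. The remaining ingredients (the explicit spectrum of $\Delta_\Gamma^\sigma$, the discreteness of the spectrum of $L_k^\sigma$, and the variational characterization of the count of negative eigenvalues) are standard and already in place.
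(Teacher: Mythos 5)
Your proposal is correct and follows essentially the same route as the paper: test the quadratic form on the span of the $2J+1$ eigenfunctions of $\Delta_\Gamma^\sigma$ with eigenvalue at most $\left(\frac{2\pi}{l}\right)^2J^2$ and use the defining inequality for $J$ pointwise. The only difference is that you carefully justify the final strict inequality (via the finite zero set of a nonzero trigonometric polynomial on the arc where the inequality is strict), a step the paper's proof asserts without comment; this is a welcome addition rather than a divergence.
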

\begin{proof}
    Let $\eta$ be a nonzero linear combination of the $2J+1$ eigenfunctions of $\Delta^\sigma_\Gamma$ with eigenvalue at most $\left(\frac{2\pi}l\right)J^2$. Then
    \begin{multline*}
      \int_\Gamma\eta(-L_k^\sigma)\eta\,\sigma=\int_\Gamma\left(\eta(-\Delta_\Gamma^\sigma)\eta-\sigma^{-2}\left(1+\frac{1-k^2}{r^2}\right)\eta^2\right)\,\sigma\\\le\int_\Gamma\left(\left(\frac{2\pi}l\right)^2J^2-\sigma^{-2}\left(1+\frac{1-k^2}{r^2}\right)\right)\eta^2\,\sigma<0.
    \end{multline*}
    Thus, $L_k^\sigma$ has at least $2J+1$ negative eigenvalues.
\end{proof}

\subsection{Fine index bounds}\label{subsec:fine}
We can re-express the results of Propositions \ref{prop:maxeigs} and \ref{prop:mineigs} as follows.
\begin{proposition}\label{prop:fineboundk}
  Let $i_k$ denote the number of negative eigenvalues of $L_k$, or, equivalently, of $L_k^\sigma$. Then
  \begin{align}
    i_k&\ge2\left\lfloor\frac l{2\pi}\min_\Gamma\sigma^{-1}\sqrt{1+\frac{1-k^2}{r^2}}\right\rfloor+1,&k^2&\le1+r_{\min}^2,\label{eq:lowerbound}\\
    i_k&\le2\left\lceil\frac l{2\pi}\max_\Gamma\sigma^{-1}\sqrt{1+\frac{1-k^2}{r^2}}\right\rceil-1,&k^2&<1+r_{\max}^2,\label{eq:upperbound}\\
    i_k&=0,&k^2&\ge1+r_{\max}^2,\label{eq:noeigs}
  \end{align}
  except in the case where $\frac l{2\pi}\sigma^{-1}\sqrt{1+\frac{1-k^2}{r^2}}$ is constant on $\Gamma$ and an integer, in which case \eqref{eq:lowerbound} becomes $i_k\ge2\left(\frac l{2\pi}\sigma^{-1}\sqrt{1+\frac{1-k^2}{r^2}}\right)-1$.

  As always, the notation $\lfloor a\rfloor$ and $\lceil a\rceil$ denotes the largest integer less than or equal to $a$ and the smallest integer greater than or equal to $a$, respectively. In \eqref{eq:upperbound}, by $\max_\Gamma$, we mean the maximum value over all points of $\Gamma$ where the expression is defined, which is a nonempty region by the assumption that $k^2<1+r_{\max}^2$.
\end{proposition}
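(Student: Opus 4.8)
The goal is to restate Propositions \ref{prop:maxeigs} and \ref{prop:mineigs} in the cleaner form of Proposition \ref{prop:fineboundk}. The plan is to unwind the definitions of $J$ in each of those propositions and rewrite them using floor and ceiling functions, while carefully tracking the edge cases.

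First I would handle the lower bound \eqref{eq:lowerbound}. In Proposition \ref{prop:mineigs}, $J$ is the largest integer with $J^2\le\left(\tfrac l{2\pi}\right)^2\min_\Gamma\sigma^{-2}\left(1+\tfrac{1-k^2}{r^2}\right)$, subject to the strictness condition. Since everything under the square root is nonnegative when $k^2\le1+r_{\min}^2$, taking square roots gives $J\le\tfrac l{2\pi}\min_\Gamma\sigma^{-1}\sqrt{1+\tfrac{1-k^2}{r^2}}$, so the largest such $J$ is the floor of the right-hand side — provided the strictness condition holds, which it does unless $\tfrac l{2\pi}\sigma^{-1}\sqrt{1+\tfrac{1-k^2}{r^2}}$ is constant on $\Gamma$ and equal to an integer. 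In that exceptional case, $J$ must be taken one smaller, namely $J=\tfrac l{2\pi}\sigma^{-1}\sqrt{1+\tfrac{1-k^2}{r^2}}-1$, which yields the stated exception $i_k\ge2\left(\tfrac l{2\pi}\sigma^{-1}\sqrt{1+\tfrac{1-k^2}{r^2}}\right)-1$ after substituting into $2J+1$. Then Proposition \ref{prop:mineigs} gives $i_k\ge2J+1$, which is exactly \eqref{eq:lowerbound}.

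Next I would handle the upper bound. In Proposition \ref{prop:maxeigs}, $J$ is the smallest nonnegative integer with $J^2\ge\left(\tfrac l{2\pi}\right)^2\max_\Gamma\sigma^{-2}\left(1+\tfrac{1-k^2}{r^2}\right)$. When $k^2<1+r_{\max}^2$, the quantity $1+\tfrac{1-k^2}{r^2}$ is positive somewhere on $\Gamma$, so the maximum is positive and taking square roots gives that $J$ is the smallest nonnegative integer with $J\ge\tfrac l{2\pi}\max_\Gamma\sigma^{-1}\sqrt{1+\tfrac{1-k^2}{r^2}}$, i.e.\ $J=\left\lceil\tfrac l{2\pi}\max_\Gamma\sigma^{-1}\sqrt{1+\tfrac{1-k^2}{r^2}}\right\rceil$, where the max is understood to range over the points where the expression is real. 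Here $J\ge1$ automatically, so Proposition \ref{prop:maxeigs} gives $i_k\le2J-1$, which is \eqref{eq:upperbound}. For \eqref{eq:noeigs}, if $k^2\ge1+r_{\max}^2$ then $1+\tfrac{1-k^2}{r^2}\le0$ everywhere on $\Gamma$, so the max in Proposition \ref{prop:maxeigs} is $\le0$ and $J=0$, giving $i_k=0$.

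The main subtlety — and the only place one has to be careful rather than mechanical — is the exceptional case in the lower bound, where the strictness requirement in Proposition \ref{prop:mineigs} forces $J$ down by one exactly when $\tfrac l{2\pi}\sigma^{-1}\sqrt{1+\tfrac{1-k^2}{r^2}}$ is a constant integer; everything else is just taking square roots of nonnegative quantities and translating the minimality/maximality of $J$ into ceiling/floor notation. A secondary point worth a remark is that in \eqref{eq:upperbound} the set of points of $\Gamma$ where $1+\tfrac{1-k^2}{r^2}\ge0$, on which $\max_\Gamma$ is evaluated, is nonempty precisely because $k^2<1+r_{\max}^2$; this is what makes the ceiling well-defined.
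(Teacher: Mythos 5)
Your proposal is correct and follows essentially the same route as the paper's proof: both translate Propositions \ref{prop:maxeigs} and \ref{prop:mineigs} into floor/ceiling form by taking square roots, isolate the exceptional case where $\frac l{2\pi}\sigma^{-1}\sqrt{1+\frac{1-k^2}{r^2}}$ is a constant integer (forcing $J$ down by one to preserve strictness), and split the upper bound according to whether $k^2<1+r_{\max}^2$ or $k^2\ge1+r_{\max}^2$. Your remarks on where $\max_\Gamma$ is taken and why that region is nonempty are exactly the fine print the paper records in the statement itself.
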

\begin{proof}
    Equation \eqref{eq:lowerbound} follows directly from Proposition \ref{prop:mineigs} by taking the square root. When $\frac l{2\pi}\sigma^{-1}\sqrt{1+\frac{1-k^2}{r^2}}$ is a constant integer, we must lower $J$ by one to ensure that the inequality $J^2<\left(\frac l{2\pi}\right)^2\sigma^{-2}\left(1+\frac{1-k^2}{r^2}\right)$ is strict somewhere on $\Gamma$.

    Next, looking at Proposition \ref{prop:maxeigs}, we note that if $k^2\ge1+r_{\max}^2$, then $1+\frac{1-k^2}{r^2}\le0$ on all of $\Gamma$, so $J=0$, giving us \eqref{eq:noeigs}. On the other hand, if $k^2<1+r_{\max}^2$, then $1+\frac{1-k^2}{r_{\max}^2}>0$, so $J>0$, and so \eqref{eq:upperbound} follows by taking the square root of the inequality defining $J$.
\end{proof}

We thus obtain our index bounds.
\begin{theorem}\label{thm:finebound}
  Let $\Sigma\subset\R^3$ be an immersed rotationally symmetric self-shrinking torus. Let $F(\Sigma)$ denote the entropy of $\Sigma$, let $r$ be the distance from the axis of rotation, and let $\sigma=\frac12re^{-\abs x^2/4}$. Let $i(\Sigma)$ be the index of $\Sigma$, with the convention that we exclude the translation and dilation variations from the count. Then
  \begin{align}
    \begin{split}
      i(\Sigma)&\le2\left\lceil\frac{F(\Sigma)}{2\pi}\max_\Sigma\sigma^{-1}\sqrt{1+\frac{1}{r^2}}\right\rceil-5\\
      &\phantom{\le}+\sum_{1\le k<\sqrt{1+r_{\max}^2}}\left(4\left\lceil\frac{F(\Sigma)}{2\pi}\max_\Sigma\sigma^{-1}\sqrt{1+\frac{1-k^2}{r^2}}\right\rceil-2\right),
    \end{split}\\
    \begin{split}\label{eq:fulllower}
      i(\Sigma)&\ge2\left\lfloor\frac {F(\Sigma)}{2\pi}\min_\Sigma\sigma^{-1}\sqrt{1+\frac{1}{r^2}}\right\rfloor-3\\
      &\phantom{\le}+\sum_{1\le k\le\sqrt{1+r_{\min}^2}}\left(4\left\lfloor\frac {F(\Sigma)}{2\pi}\min_\Sigma\sigma^{-1}\sqrt{1+\frac{1-k^2}{r^2}}\right\rfloor+2\right),
    \end{split}                 
  \end{align}
  except if for some value of $k$, $\frac{F(\Sigma)}{2\pi}\sigma^{-1}\sqrt{1+\frac{1-k^2}{r^2}}$ is constant on $\Sigma$ and an integer. If this value of $k$ is zero, we must replace $-3$ with $-5$ in the lower bound \eqref{eq:fulllower}. If this value of $k$ is positive, we must replace $-3$ with $-7$.
\end{theorem}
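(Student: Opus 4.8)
The plan is to glue together the component bounds of Proposition~\ref{prop:fineboundk} into bounds on the full index, using that the spectrum of $L_\Sigma$ is assembled from the spectra of the $L_k$ with known multiplicities. First I would record those multiplicities. By Proposition~\ref{prop:Lfourierdecomposition}, every $f\colon\Sigma\to\R$ decomposes into Fourier modes $u(r,z)\cos k\theta$ and $v(r,z)\sin k\theta$, the operator $L_\Sigma$ preserves each mode, and it acts on it through $L_k$; since $\cos 0\theta\equiv 1$, an eigenfunction of $L_0$ yields a single eigenfunction of $L_\Sigma$, whereas for $k\ge 1$ an eigenfunction $u$ of $L_k$ yields the pair $u\cos k\theta,\ u\sin k\theta$. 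Writing $i_k$ for the number of negative eigenvalues of $L_k$, the full Morse index of $L_\Sigma$ is thus $i_0+2\sum_{k\ge1}i_k$, a finite sum since $i_k=0$ once $k^2\ge1+r_{\max}^2$ by \eqref{eq:noeigs}.

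Next I would pin down the four variations that the index convention excludes. Dilation corresponds to $f=H_\Sigma$, a rotationally symmetric eigenfunction of $L_0$ with eigenvalue $-1$; translation along the axis corresponds to $f=e_z^\perp$, again rotationally symmetric, an eigenfunction of $L_0$ with eigenvalue $-\tfrac12$; and the two horizontal translations correspond to $e_r^\perp\cos\theta$ and $e_r^\perp\sin\theta$, so $e_r^\perp$ is an eigenfunction of $L_1$ with eigenvalue $-\tfrac12$. For a torus none of $H_\Sigma$, $e_z^\perp$, $e_r^\perp$ is identically zero (otherwise $\Sigma$ would be a cone through the origin, invariant under vertical translation, or would have a cross-section built from horizontal segments, respectively), and $H_\Sigma$ and $e_z^\perp$ are linearly independent since their eigenvalues differ. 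So these four functions span an honest four-dimensional subspace of the negative eigenspace of $L_\Sigma$, and removing it --- two dimensions from $L_0$ and, counted with multiplicity two, one dimension from $L_1$ --- gives the exact identity
\begin{equation*}
  i(\Sigma)=i_0+2\sum_{k\ge1}i_k-4.
\end{equation*}

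Finally, since $r$ and $\sigma$ are rotationally symmetric we have $\max_\Gamma=\max_\Sigma$, $\min_\Gamma=\min_\Sigma$ and $l^\sigma(\Gamma)=F(\Sigma)$, so both inequalities drop out of this identity once we substitute Proposition~\ref{prop:fineboundk}. For the upper bound I would apply \eqref{eq:upperbound} to $i_0$ and to each $i_k$ with $1\le k<\sqrt{1+r_{\max}^2}$ --- noting that for $k=1$ the radical $\sqrt{1+\frac{1-k^2}{r^2}}$ equals $1$, so the $k=1$ term fits the general summand --- and \eqref{eq:noeigs} to kill all larger $k$; the leading $-1$ from the $i_0$ estimate combines with the $-4$ to give $-5$, and doubling the $-1$ inside each $i_k$ estimate produces the $-2$ in the summand. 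For the lower bound I would apply \eqref{eq:lowerbound} to $i_0$ and to each $i_k$ with $1\le k\le\sqrt{1+r_{\min}^2}$ and bound the remaining $i_k$ below by $0$; now the leading $+1$ combines with $-4$ to give $-3$, and doubling the $+1$ in each $i_k$ estimate produces the $+2$ in the summand. In the exceptional case, Proposition~\ref{prop:fineboundk} weakens the estimate for the offending $i_k$ by $2$: if $k=0$ this lowers the bound on $i(\Sigma)$ by $2$, turning $-3$ into $-5$, and if $k\ge1$ the factor-of-two multiplicity lowers it by $4$, turning $-3$ into $-7$. The only step requiring genuine care is the bookkeeping of the second paragraph --- checking that exactly four dimensions, neither more nor fewer, are removed, so that the identity above is valid --- since the substantive spectral analysis is already contained in Proposition~\ref{prop:fineboundk}; the remainder is arithmetic.
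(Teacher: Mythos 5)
Your proposal is correct and follows essentially the same route as the paper: the identity $i(\Sigma)=-4+i_0+2\sum_{k\ge1}i_k$ combined with Proposition \ref{prop:fineboundk}, with your second paragraph merely spelling out the four excluded eigenfunctions that the paper dispatches in a single sentence. The only detail the paper includes that you omit is the observation that, since $r$ is non-constant on a self-shrinking torus, the exceptional case of Proposition \ref{prop:fineboundk} can occur for at most one value of $k$, which is what justifies the phrasing of the fine print.
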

\begin{proof}
  A compact self-shrinker is not invariant under translations and dilations, so, per our definition, the index of $\Sigma$ is four less than the number of negative eigenvalues of $L$. By Proposition \ref{prop:Lfourierdecomposition}, each negative eigenvalue of $L_0$ gives a negative eigenvalue of $L$, and each negative eigenvalue of $L_k$ gives a pair of negative eigenvalues of $L_0$, one for $u\cos k\theta$ and one for $u\sin k\theta$. Thus, $i(\Sigma)=-4+i_0+2\sum_{k\ge1}i_k$, and the result follows from Proposition \ref{prop:fineboundk}. With regards to the fine print, we note that for a self-shrinking torus, $r$ is not constant, so the exceptional case where $\frac l{2\pi}\sigma^{-1}\sqrt{1+\frac{1-k^2}{r^2}}$ is constant and an integer can happen for at most one value of $k$.
\end{proof}

\subsection{Coarse index bounds}\label{subsec:coarse}
We now prove coarser but simpler bounds on the index that depend only on the entropy of $\Sigma$, its minimum distance $r_{\min}$ from the axis of rotation, and its maximum distance $R$ from the origin.

\begin{proposition}\label{prop:coarseboundk}
  Let $R=\max_\Gamma\abs x$ and $l$ be the $\sigma$-length of $\Gamma$. Then the number $i_k$ of negative eigenvalues of $L_k$ satisfies the bounds
  \begin{align*}
    \frac l\pi\sqrt{2e}\sqrt{1+\frac1{R^2}}-1&<i_0<\frac l\pi\frac2{r_{\min}}e^{R^2/4}\sqrt{1+\frac1{r_{\min}^2}}+1,\\
    \frac l\pi\sqrt{2e}-1&<i_1<\frac l\pi\frac2{r_{\min}}e^{R^2/4}+1,\\    
    &\phantom{{}<{}}i_k<\frac l\pi\frac2{r_{\min}}e^{R^2/4}\sqrt{1-\frac{k^2-1}{R^2}}+1,&1\le k<\sqrt{1+R^2},\\
    &\phantom{{}<{}}i_k=0,&k\ge\sqrt{1+R^2}.
  \end{align*}
\end{proposition}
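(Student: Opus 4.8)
The plan is to read off these coarse bounds from the fine bounds of Proposition \ref{prop:fineboundk} by estimating $\sigma^{-1}\sqrt{1+\frac{1-k^2}{r^2}}$ pointwise on $\Gamma$, using only the inequalities $r_{\min}\le r\le\abs x\le R$ that hold at every point of $\Gamma$. Since $n=2$, a point of $\Gamma$ in the half-plane is $x=(r,z)$ with $\abs x^2=r^2+z^2$, so $\sigma^{-1}=\frac2re^{(r^2+z^2)/4}$.

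For the upper bounds, I would first bound $\sigma^{-1}=\frac2re^{\abs x^2/4}\le\frac2{r_{\min}}e^{R^2/4}$ on all of $\Gamma$, and then bound the remaining factor by monotonicity in $r$: $\sqrt{1+\frac1{r^2}}$ is decreasing, hence at most $\sqrt{1+\frac1{r_{\min}^2}}$; for $k\ge1$, $\sqrt{1-\frac{k^2-1}{r^2}}$ is increasing on the part of $\Gamma$ where it is defined, hence at most $\sqrt{1-\frac{k^2-1}{R^2}}$ there, which reduces to $1$ when $k=1$. Substituting these pointwise bounds into the $\max_\Gamma$ in \eqref{eq:upperbound} and using $2\lceil a\rceil-1<2a+1$ gives the claimed strict upper bounds for $i_0$ and for $i_k$ when $k^2<1+r_{\max}^2$. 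When $1+r_{\max}^2\le k^2<1+R^2$, equation \eqref{eq:noeigs} gives $i_k=0$, which lies below the positive right-hand side; and when $k\ge\sqrt{1+R^2}$ we have $k^2\ge1+R^2\ge1+r_{\max}^2$, so \eqref{eq:noeigs} gives $i_k=0$ outright. The exceptional case of Proposition \ref{prop:fineboundk} is irrelevant here, as it only alters \eqref{eq:lowerbound}.

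For the lower bounds on $i_0$ and $i_1$, the one new input is a pointwise lower bound on $\sigma^{-1}$ that cannot use $r_{\min}$ or $R$. Dropping the nonnegative term $z^2$ gives $\sigma^{-1}\ge\frac2re^{r^2/4}$, and an elementary one-variable computation shows $r\mapsto\frac2re^{r^2/4}$ attains its minimum over $r>0$ at $r=\sqrt2$, with value $\sqrt{2e}$; hence $\sigma^{-1}\ge\sqrt{2e}$ on $\Gamma$. For $k=1$ the extra factor is $1$, so $\min_\Gamma\sigma^{-1}\sqrt{1+\frac{1-k^2}{r^2}}\ge\sqrt{2e}$, while for $k=0$ we further use $r\le R$ to get $\sqrt{1+\frac1{r^2}}\ge\sqrt{1+\frac1{R^2}}$, so that $\sigma^{-1}\sqrt{1+\frac1{r^2}}\ge\sqrt{2e}\sqrt{1+\frac1{R^2}}$ pointwise. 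Plugging these into \eqref{eq:lowerbound} and using $2\lfloor a\rfloor+1>2a-1$ yields $i_1>\frac l\pi\sqrt{2e}-1$ and $i_0>\frac l\pi\sqrt{2e}\sqrt{1+\frac1{R^2}}-1$. In the exceptional case of Proposition \ref{prop:fineboundk}, equation \eqref{eq:lowerbound} is replaced by $i_k\ge\frac l\pi\sigma^{-1}\sqrt{1+\frac{1-k^2}{r^2}}-1$, and the same pointwise estimates bound the right side below by the claimed quantity; the inequality remains strict because those pointwise estimates cannot all be equalities at once along a closed curve that stays away from the axis.

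I expect no real obstacle in this argument: the only computation of substance is the minimization yielding $\sqrt{2e}$, and the rest is careful bookkeeping to make sure each monotonicity estimate and each floor/ceiling rounding points in the direction needed.
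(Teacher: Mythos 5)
Your proposal is correct and follows essentially the same route as the paper: both deduce the coarse bounds from Proposition \ref{prop:fineboundk} via the pointwise estimates $\tfrac12 r_{\min}e^{-R^2/4}\le\sigma\le\tfrac1{\sqrt{2e}}$ (the latter from the same one-variable minimization at $r=\sqrt2$), the same monotonicity estimates on $\sqrt{1+\tfrac{1-k^2}{r^2}}$, the same floor/ceiling inequalities, and the same handling of the range $1+r_{\max}^2\le k^2<1+R^2$ and of the exceptional case via strictness of $\sigma^{-1}>\sqrt{2e}$ away from isolated points.
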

\begin{proof}
    We would like to rewrite the bounds in Proposition \ref{prop:fineboundk} in terms of $l$, $r_{\min}$, and $R$. We begin by bounding the expression $\sigma^{-1}\sqrt{1+\frac{1-k^2}{r^2}}$ in terms of $r_{\min}$, $r_{\max}$, $\sigma_{\min}$, and $\sigma_{\max}$, where these denote the minimum and maximum values of $r$ and $\sigma$ on $\Gamma$, respectively. Taking cases based on the sign of $1-k^2$, we obtain whenever $\sqrt{1+\frac{1-k^2}{r^2}}$ is defined that
    \begin{align*}
      \sigma_{\max}^{-1}\sqrt{1+\frac1{r_{\max}^2}}&\le\sigma^{-1}\sqrt{1+\frac{1-k^2}{r^2}}\le\sigma_{\min}^{-1}\sqrt{1+\frac1{r_{\min}^2}},&&k=0,\\
      \sigma_{\max}^{-1}&\le\sigma^{-1}\sqrt{1+\frac{1-k^2}{r^2}}\le\sigma_{\min}^{-1},&&k=1,\\
      &\phantom{{}\le{}}\sigma^{-1}\sqrt{1+\frac{1-k^2}{r^2}}\le\sigma_{\min}^{-1}\sqrt{1-\frac{k^2-1}{r_{\max}^2}},&&1\le k<\sqrt{1+r_{\max}^2}.
    \end{align*}
    A reader might expect the last line to have a lower bound of $\sigma_{\max}^{-1}\sqrt{1-\frac{k^2-1}{r_{\min}^2}}$, but we excluded it because we generally expect $1-\frac{k^2-1}{r_{\min}^2}$ to be negative for $k\ge2$, making this bound vacuous.

    To rewrite these bounds in terms of $r_{\min}$ and $R$, we note that $r_{\min}\le r\le R$, and we can check that
    \begin{equation*}
      \tfrac12r_{\min}e^{-R^2/4}\le\sigma\le\tfrac1{\sqrt{2e}}.
    \end{equation*}
    
    Putting everything together, we have for $k=0$ and $k=1$ that $k^2\le1+r_{\min}^2$, so by Proposition \ref{prop:fineboundk} and using the fact that $2\lfloor a\rfloor+1>2a-1$, we have
    \begin{equation*}
      i_k>\frac l{\pi}\sigma_{\max}^{-1}\sqrt{1+\frac{1-k^2}{r_{\max}^2}}-1\ge\frac l\pi\sqrt{2e}\sqrt{1+\frac{1-k^2}{R^2}}-1.
    \end{equation*}
    One can check that that $i_k>\frac l\pi\sqrt{2e}\sqrt{1+\frac{1-k^2}{R^2}}-1$ continues to hold in the exceptional case of Proposition \ref{prop:fineboundk} because the inequality $\sigma^{-1}\ge\sqrt{2e}$ must be strict on almost all points of $\Gamma$.
    
    As for the upper bound for $k=0$ and $k=1$, using the fact that $2\lceil a\rceil-1<2a+1$, we have
    \begin{equation*}
      i_k<\frac l\pi\sigma_{\min}^{-1}\sqrt{1+\frac{1-k^2}{r_{\min}^2}}+1\le\frac l\pi\frac 2{r_{\min}}e^{R^2/4}\sqrt{1+\frac{1-k^2}{r_{\min}^2}}+1.
    \end{equation*}

    Next, we consider $k\ge1$. If $k<\sqrt{1+r_{\max}^2}$, then by Proposition \ref{prop:fineboundk} and by our bounds on $\sigma^{-1}\sqrt{1+\frac{1-k^2}{r^2}}$, we have
    \begin{equation*}
      i_k<\frac l\pi\sigma_{\min}^{-1}\sqrt{1-\frac{k^2-1}{r_{\max}^2}}+1\le\frac l\pi\frac2{r_{\min}}e^{R^2/4}\sqrt{1-\frac{k^2-1}{R^2}}+1.
    \end{equation*}
    If $\sqrt{1+r_{\max}^2}\le k<\sqrt{1+R^2}$, then $i_k=0$ by Proposition \ref{prop:fineboundk}, so we trivially have $i_k<\frac l\pi\frac2{r_{\min}}e^{R^2/4}\sqrt{1-\frac{k^2-1}{R^2}}+1$. Finally, if $k\ge\sqrt{1+R^2}$, then $k\ge\sqrt{1+r_{\max}^2}$, so $i_k=0$.
\end{proof}

We now prove our coarse estimate for the index.

\begin{proof}[of Theorem \ref{thm:coarsebound}]
  Using Proposition \ref{prop:coarseboundk} and recalling that $i(\Sigma)=-4+i_0+2\sum_ki_k$ to exclude translations and dilations, we compute
  \begin{align*}
    i(\Sigma)&<\frac2{\pi}\frac{F(\Sigma)}{r_{\min}}e^{R^2/4}\left(\sqrt{1+\frac1{r_{\min}^2}}+2\sum_{1\le k<\sqrt{1+R^2}}\sqrt{1-\frac{k^2-1}{R^2}}\right)\\
             &\hphantom{{}<\frac2{\pi}\frac{F(\Sigma)}{r_{\min}}e^{R^2/4}}\qquad{}+1+2\left\lfloor\sqrt{1+R^2}\right\rfloor-4\\
             &<\frac2\pi\frac{F(\Sigma)}{r_{\min}}e^{R^2/4}\left(\sqrt{1+\frac1{r_{\min}^2}}+2\sqrt{1+R^2}\right)+2\sqrt{1+R^2}-3,\\
             &<\frac2\pi\frac{F(\Sigma)}{r_{\min}}e^{R^2/4}\left(3+\frac1{r_{\min}}+2R\right)+2R-1,\\
    i(\Sigma)&>\frac{\sqrt{2e}}\pi F(\Sigma)\left(\sqrt{1+\frac1{R^2}}+2\right)-1-2-4>\frac{3\sqrt{2e}}\pi F(\Sigma)-7.\qedhere
  \end{align*}    
\end{proof}

\subsection{Index bounds for the Angenent torus}\label{subsec:indexbounds} We illustrate the index bounds given in Proposition \ref{prop:fineboundk} by applying them to the Angenent torus computed in \cite{bk19}. We then compare these bounds with our preliminary numerical results for the index of the Angenent torus \cite{bk20}.

In \cite{bk19}, we numerically computed the entropy of the Angenent torus along with a $2048$-point discrete curve that approximates the torus cross-section $\Gamma$. These results have since then been replicated in independent work \cite{bdn19}. We plug this entropy value and the $2048$ values for $(r,z)$ into the formulas in Proposition \ref{prop:fineboundk}, obtaining the following bounds for $i_k$.
\begin{align*}
  1\le i_0&\le7,\\
  1\le i_1&\le5,\\
  i_2&\le5,\\
  i_3&\le3,\\
  i_k&=0,\quad k\ge4.
\end{align*}

We see that our lower bounds are quite weak. Translation and dilation alone gives $i_0\ge2$ and $i_1\ge1$, and Liu's results \cite{l16} improve that to $i_0\ge3$ and $i_1\ge2$. In general, we expect our lower bounds to be useful only for immersed tori with larger entropy. As discussed in the introduction, $F(\Sigma)\ge4.5$ is enough for our coarse lower bounds to give new information, and these finer bounds might give new information even earlier. For comparison, the entropy of the Angenent torus is roughly $1.85$.

Meanwhile, our upper bounds imply that the index of the Angenent torus is at most $7+2(5+5+3)-4=29$, excluding translation and dilation. Our preliminary numerical results \cite{bk20}, however, suggest that the index is actually much closer to Liu's lower bound of $3$. Specifically, we obtained numerically that
\begin{align*}
  i_0&=3,\\
  i_1&=2,\\
  i_2&=1,\\
  i_k&=0,\quad k\ge3,
\end{align*}
for a total index of $3+2(2+1)-4=5$, excluding translation and dilation.

\section{Entropy bounds}\label{sec:entropybounds}
We can use our upper bounds on the index against Liu's lower bound \cite{l16} to obtain information about the entropies of immersed rotationally symmetric self-shrinking tori. From Theorem \ref{thm:coarsebound} and Liu's result that $i(\Sigma)\ge3$, we immediately obtain
\begin{equation*}
  F(\Sigma)>\frac\pi2r_{\min}e^{-R^2/4}\frac{4-2R}{3+r_{\min}^{-1}+2R}.
\end{equation*}
Unfortunately, this is a vacuous bound because we expect the right-hand side to be negative. However, with a more refined version of this approach, we can get nontrivial bounds.

Liu's work and our work both give more information about the eigenvalues of $L_0$ and $L_1$, and with this more detailed analysis, we can get bounds on the entropy. More generally, this section can be seen as illustrating how to exploit information about the eigenvalues of $L_k$ in order to get information about the entropy of $\Sigma$.

The formula for $L_1$ is simpler than that of $L_0$, so we begin here. Horizontal translation gives an eigenfunction of $L_1$ with eigenvalue $-\frac12$, and Liu \cite{l16} shows that there is another eigenfunction of $L_1$ with smaller eigenvalue.
\begin{theorem}\label{thm:fbound1}
  Let $\Sigma\subset\R^3$ be an immersed rotationally symmetric self-shrinking torus. Let $F(\Sigma)$ be the entropy of $\Sigma$, and let $r$ be the distance to the axis of rotation. Then
  \begin{equation*}
    F(\Sigma)\ge\pi\sqrt2\min_\Sigma re^{-\abs x^2/4}.
  \end{equation*}
  Moreover, if $re^{-\abs x^2/4}$ is not constant on $\Sigma$, then the above inequality is strict.
\end{theorem}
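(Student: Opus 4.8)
The plan is to rewrite the statement in terms of the geodesic $\Gamma$ in the half-plane with the metric $g^\sigma$ and then to play the known low eigenvalues of $L_1$ off against the explicit spectrum of $\Delta_\Gamma^\sigma$ from Proposition \ref{prop:deltaeigs}. Writing $l = l^\sigma(\Gamma) = F(\Sigma)$ by Proposition \ref{prop:FGamma} and $\sigma_{\min} = \min_\Gamma\sigma = \tfrac12\min_\Sigma re^{-\abs x^2/4}$, the claimed inequality is exactly $l \ge 2\sqrt2\,\pi\,\sigma_{\min}$, and it should be strict precisely when $\sigma$ is non-constant on $\Gamma$.

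First I would record two explicit eigenfunctions of $L_1$. By Theorem \ref{thm:lk} with $k = 1$ we have $L_1 = \sigma\Delta_\Gamma^\sigma\sigma + 1$, so $L_1\sigma^{-1} = \sigma\Delta_\Gamma^\sigma(1) + \sigma^{-1} = \sigma^{-1}$; thus $\sigma^{-1}$ is an eigenfunction with eigenvalue $-1$. Second, horizontal translation gives the eigenfunction $e_r^\perp$ of $L_1$ with eigenvalue $-\tfrac12$, as in \cite{l16}. Since these belong to distinct eigenvalues of $L_1$, which is self-adjoint on $L^2(\Gamma,\sigma)$, they are orthogonal, so $L_1$ has at least two eigenvalues that are $\le -\tfrac12$.

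Next I would conjugate by $\sigma$. Setting $F = \sigma f$ and integrating by parts with respect to $g^\sigma$, whose arclength I denote $ds^\sigma = \sigma\,ds$, gives
\begin{equation*}
  \int_\Gamma f(-L_1)f\,\sigma = \int_\Gamma\abs{\nabla^\sigma F}^2\,ds^\sigma - \int_\Gamma\sigma^{-2}F^2\,ds^\sigma,\qquad \int_\Gamma f^2\,\sigma = \int_\Gamma\sigma^{-2}F^2\,ds^\sigma.
\end{equation*}
Hence the eigenvalues of $L_1$ are exactly $\nu - 1$, where $\nu$ ranges over the min--max values of the Rayleigh quotient $\int_\Gamma\abs{\nabla^\sigma F}^2\,ds^\sigma \,/\, \int_\Gamma\sigma^{-2}F^2\,ds^\sigma$; by the previous paragraph, besides the value $\nu = 0$ there is a second value $\nu_* \le \tfrac12$. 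To bound $\nu_*$ from below I restrict the Rayleigh quotient to the codimension-one subspace $\{F : \int_\Gamma F\,ds^\sigma = 0\}$, on which Proposition \ref{prop:deltaeigs} gives $\int_\Gamma\abs{\nabla^\sigma F}^2\,ds^\sigma \ge (2\pi/l)^2\int_\Gamma F^2\,ds^\sigma \ge (2\pi/l)^2\sigma_{\min}^2\int_\Gamma\sigma^{-2}F^2\,ds^\sigma$; by the min--max principle, $\nu_* \ge (2\pi/l)^2\sigma_{\min}^2$. Combining the two estimates for $\nu_*$ yields $(2\pi/l)^2\sigma_{\min}^2 \le \tfrac12$, i.e. $l \ge 2\sqrt2\,\pi\,\sigma_{\min}$, as desired. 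For strictness, observe that if $\sigma$ is non-constant then $\{\sigma > \sigma_{\min}\}$ is a non-empty open arc, the second inequality above is strict for any $F$ not vanishing there, and equality in the first inequality would force the minimizer to have the form $a\cos(\tfrac{2\pi}{l} s) + b\sin(\tfrac{2\pi}{l} s)$ with $(a,b)\neq(0,0)$, which has only isolated zeros; these cannot both hold, so $\nu_* > (2\pi/l)^2\sigma_{\min}^2$ and the inequality is strict.

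The main obstacle, I expect, is recognizing that simply counting the negative eigenvalues of $L_1$ is too lossy: the weaker fact that $L_1$ has at least two negative eigenvalues only yields $F(\Sigma) > 2\pi\sigma_{\min}$, short of the target by a factor of $\sqrt2$. One has to use the \emph{exact} eigenvalues $-1$ and $-\tfrac12$ and transport them through the conformal change in order to control the second eigenvalue $\nu_*$ of $\Delta_\Gamma^\sigma$ from both sides, which works precisely because the circle's spectrum is explicit. The remaining steps — the integration-by-parts identity, the self-adjointness of $L_1$, and the min--max comparison — are routine.
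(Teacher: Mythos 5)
Your proof is correct, and its engine is the same as the paper's: via Theorem \ref{thm:lk}, convert $L_1=\sigma\Delta_\Gamma^\sigma\sigma+1$ into a weighted eigenvalue problem for $\Delta_\Gamma^\sigma$, use the explicit circle spectrum of Proposition \ref{prop:deltaeigs} on the orthogonal complement of the constants to bound the second eigenvalue of $L_1$ from below by $(2\pi/l)^2\sigma_{\min}^2-1$, and play this against the fact that $L_1$ has at least two eigenvalues $\le-\tfrac12$. The genuine difference is where that last fact comes from. The paper imports it from Liu \cite{l16}, whose argument is that the translation eigenfunction $e_r^\perp$ has eigenvalue $-\tfrac12$ but changes sign, hence cannot be the ground state, so some eigenvalue lies below $-\tfrac12$. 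You instead exhibit the lower eigenfunction explicitly as $\sigma^{-1}$ with eigenvalue $-1$ (the content of Theorem \ref{thm:l1eig}, which you rederive from Theorem \ref{thm:lk} alone, so there is no circularity even though that theorem appears later in the paper); orthogonality of distinct eigenspaces of the self-adjoint $L_1$ then supplies the two eigenvalues. This makes the proof self-contained, avoiding Liu's nodal argument entirely, and is arguably cleaner; note, though, that you do still need $e_r^\perp\not\equiv0$ (true for any closed cross-section, since otherwise $\Gamma$ would be everywhere tangent to $e_r$ and could not close up), because $\sigma^{-1}$ alone only controls the bottom min--max value $\nu_0=0$ and says nothing about $\nu_1$. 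One small point in the strictness step: before analyzing the equality cases you should say why the constrained infimum of the Rayleigh quotient over $\{F:\int_\Gamma F\,ds^\sigma=0\}$ is attained (standard direct method for the weighted problem on a compact curve, since $\sigma^{-2}$ is bounded above and below); the paper's proof-by-contradiction phrasing has essentially the same equality-case subtlety, so this is a shared elision rather than a gap peculiar to your argument.
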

\begin{proof}
    As before, let $\Gamma$ be the cross-section of $\Sigma$, and let $l=F(\Sigma)$. By Theorem \ref{thm:lk}, $L_1=\sigma\Delta^\sigma_\Gamma\sigma+1$. We then proceed similarly to the proof of Proposition \ref{prop:maxeigs}.

    Let $u\colon\Gamma\to\R$ be such that $\sigma u$ is orthogonal to $1$ with respect to the $L^2(\Gamma,\sigma)$ pairing. In other words, $0=\int_\Gamma(1)(\sigma u)\,\sigma=\int_\Gamma u\sigma^2$. Then, by Proposition \ref{prop:deltaeigs}, the least eigenvalue of $\Delta_\Gamma^\sigma$ on this orthogonal complement is $\left(\frac{2\pi}l\right)^2$. We then compute that, for such a $u$,
    \begin{equation*}
      \int_\Gamma u(-L_1)u\,\sigma=\int_\Gamma\left(u\sigma(-\Delta_\Gamma^\sigma)\sigma u-u^2\right)\,\sigma\ge\int_\Gamma\left(\left(\frac {2\pi}l\right)^2\sigma^2-1\right)u^2\,\sigma.
    \end{equation*}

    Assume for the sake of contradiction that $l\le\pi\sqrt2 re^{-\abs x^2/4}=\sqrt2(2\pi)\sigma$ at all points of $\Gamma$ and that $l<\sqrt2(2\pi)\sigma$ somewhere on $\Gamma$. Then $\left(\frac{2\pi}l\right)^2\sigma^2\ge\frac12$ on $\Gamma$, and this inequality is strict somewhere on $\Gamma$. We conclude that
    \begin{equation*}
      \int_\Gamma u(-L_1)u\,\sigma>-\frac12\int_\Gamma u^2\,\sigma.
    \end{equation*}
    Thus, on the space of $u$ such that $\int_\Gamma u\sigma^2=0$, all eigenvalues of $L_1$ are larger than $-\frac12$. Because this space of $u$ has codimension one, we see that $L_1$ has at most one eigenvalue less than or equal to $-\frac12$, contradicting Liu \cite{l16}.

    We conclude that $l>\pi\sqrt2 re^{-\abs x^2/4}$ at some point on $\Gamma$ or $l\ge\pi\sqrt2 re^{-\abs x^2/4}$ at all points of $\Gamma$. Either way, we have $l\ge\min_\Gamma\pi\sqrt2re^{-\abs x^2/4}$. If we additionally assume that $re^{-\abs x^2/4}$ is not constant on $\Gamma$, then, either way, we have $l>\min_\Gamma\pi\sqrt2re^{-\abs x^2/4}$.
\end{proof}

Meanwhile, dilation gives an eigenfunction of $L_0$ with eigenvalue $-1$, and Liu \cite{l16} shows that there is another eigenfunction of $L_0$ with smaller eigenvalue. We obtain the corresponding result.
\begin{theorem}\label{thm:fbound0}
  Let $\Sigma\subset\R^3$ be an immersed rotationally symmetric self-shrinking torus. Let $F(\Sigma)$ be the entropy of $\Sigma$, and let $r$ be the distance from the axis of rotation. Then
  \begin{equation*}
    F(\Sigma)\ge\pi\min_\Sigma r^2e^{-\abs x^2/4}.
  \end{equation*}
  Moreover, if $r^2e^{-\abs x^2/4}$ is not constant on $\Sigma$, then the above inequality is strict.
\end{theorem}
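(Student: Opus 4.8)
The plan is to run the argument of Theorem \ref{thm:fbound1} with $L_0$ in place of $L_1$ and the dilation variation (eigenvalue $-1$) in place of the translation variation (eigenvalue $-\tfrac12$). By Theorem \ref{thm:lk} the relevant operator is $L_0=\sigma\Delta_\Gamma^\sigma\sigma+1+\tfrac1{r^2}$, which differs from $L_1=\sigma\Delta_\Gamma^\sigma\sigma+1$ only by the bounded zeroth-order term $\tfrac1{r^2}$. Since $r^2e^{-\abs x^2/4}=2r\sigma$, writing $l=F(\Sigma)$ the claim is equivalent to $l\ge 2\pi\min_\Gamma r\sigma$, with the inequality strict when $r^2e^{-\abs x^2/4}$ is nonconstant.

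First I would reuse the test space from the proof of Theorem \ref{thm:fbound1}: functions $u\colon\Gamma\to\R$ with $\sigma u$ orthogonal to the constants in $L^2(\Gamma,\sigma)$, i.e.\ $\int_\Gamma u\sigma^2=0$, a codimension-one condition. By Proposition \ref{prop:deltaeigs}, on the $L^2(\Gamma,\sigma)$-orthogonal complement of $1$ the smallest eigenvalue of $\Delta_\Gamma^\sigma$ is $\bigl(\tfrac{2\pi}l\bigr)^2$, so with $v=\sigma u$ one has $\int_\Gamma v(-\Delta_\Gamma^\sigma)v\,\sigma\ge\bigl(\tfrac{2\pi}l\bigr)^2\int_\Gamma v^2\,\sigma$. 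Expanding $\int_\Gamma u(-L_0)u\,\sigma=\int_\Gamma v(-\Delta_\Gamma^\sigma)v\,\sigma-\int_\Gamma\bigl(1+\tfrac1{r^2}\bigr)u^2\,\sigma$ then gives
\[
  \int_\Gamma u(-L_0)u\,\sigma\ \ge\ \int_\Gamma\left(\Bigl(\tfrac{2\pi}l\Bigr)^2\sigma^2-1-\tfrac1{r^2}\right)u^2\,\sigma .
\]

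Now suppose for contradiction that $l\le 2\pi r\sigma$ at every point of $\Gamma$ and $l<2\pi r\sigma$ somewhere. Then $\bigl(\tfrac{2\pi}l\bigr)^2\sigma^2\ge\tfrac1{r^2}$ everywhere and strictly on an open arc, so the integrand above is $\ge-1$, strictly somewhere, and (as in Theorem \ref{thm:fbound1}) the bound upgrades to $\int_\Gamma u(-L_0)u\,\sigma>-\int_\Gamma u^2\,\sigma$ for every nonzero admissible $u$. Hence all eigenvalues of $L_0$ on this codimension-one subspace exceed $-1$, so $L_0$ has at most one eigenvalue less than or equal to $-1$. This contradicts the fact that the dilation variation $H_\Sigma$ is an eigenfunction of $L_0$ with eigenvalue exactly $-1$ while Liu \cite{l16} produces a further eigenfunction of $L_0$ with strictly smaller eigenvalue. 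Negating the supposition gives: either $l>2\pi r\sigma$ at some point, whence $l>\min_\Gamma 2\pi r\sigma$, or $l\ge 2\pi r\sigma$ everywhere, whence $l\ge\max_\Gamma 2\pi r\sigma\ge\min_\Gamma 2\pi r\sigma$ with strict inequality unless $r\sigma$ is constant. In all cases $l\ge\pi\min_\Gamma r^2e^{-\abs x^2/4}$, and the inequality is strict when $r^2e^{-\abs x^2/4}$ is nonconstant.

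The delicate step is the upgrade from the non-strict bound $\int_\Gamma u(-L_0)u\,\sigma\ge-\int_\Gamma u^2\,\sigma$ to the strict one: strictness is genuinely essential here, since the non-strict bound only shows $L_0$ has at most one eigenvalue strictly below $-1$, which is consistent with the dilation eigenvalue $-1$ and yields no contradiction. As in the proof of Theorem \ref{thm:fbound1}, one handles it by observing that equality in $\int_\Gamma v(-\Delta_\Gamma^\sigma)v\,\sigma\ge\bigl(\tfrac{2\pi}l\bigr)^2\int_\Gamma v^2\,\sigma$ forces $v=\sigma u$ into the lowest nonconstant eigenspace of $\Delta_\Gamma^\sigma$, spanned by $\cos(\tfrac{2\pi}l s)$ and $\sin(\tfrac{2\pi}l s)$, whose nonzero elements vanish only at isolated points and so cannot be supported off the arc where $\bigl(\tfrac{2\pi}l\bigr)^2\sigma^2-\tfrac1{r^2}>0$; the rest is exactly the bookkeeping already done for $L_1$, now carrying $\tfrac1{r^2}$ along.
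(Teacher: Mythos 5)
Your proof is correct and follows essentially the same route as the paper: the same test space $\{u:\int_\Gamma u\sigma^2=0\}$, the same expansion of $\int_\Gamma u(-L_0)u\,\sigma$ carrying the extra $\tfrac1{r^2}$ term, and the same contradiction with Liu's second eigenfunction of $L_0$ below the dilation eigenvalue $-1$. Your closing remark justifying the strict inequality (by noting that equality in the eigenvalue bound forces $\sigma u$ into the first nonconstant eigenspace, whose elements vanish only at isolated points) supplies a detail the paper leaves implicit.
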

\begin{proof}
    We proceed as in the above proof of Theorem \ref{thm:fbound1}, letting $u\colon\Gamma\to\R$ satisfy $\int_\Gamma u\sigma^2=0$. This time, however, we work with the operator $L_0=\sigma\Delta_\Gamma^\sigma\sigma+1+\frac1{r^2}$. We compute that, for such a $u$,
    \begin{multline*}
      \int_\Gamma u(-L_0)u\,\sigma=\int_\Gamma\left(u\sigma(-\Delta_\Gamma^\sigma)\sigma u-\left(1+\frac1{r^2}\right)u^2\right)\,\sigma\\
      \ge\int_\Gamma\left(\left(\frac{2\pi}l\right)^2\sigma^2-1-\frac1{r^2}\right)u^2\,\sigma.
    \end{multline*}

    We assume for the sake of contradiction that $l\le\pi r^2e^{-\abs x^2/4}=2\pi r\sigma$ at all points of $\Gamma$ and that $l<2\pi r\sigma$ somewhere on $\Gamma$. Then $\left(\frac{2\pi}l\right)^2\sigma^2\ge\frac1{r^2}$ on $\Gamma$, and this inequality is strict somewhere on $\Gamma$. We conclude that
    \begin{equation*}
      \int_\Gamma u(-L_0)u\,\sigma>-\int_\Gamma u^2\,\sigma.
    \end{equation*}
    Thus, on the space of $u$ such that $\int_\Gamma u\sigma^2=0$, all eigenvalues of $L_0$ are larger than $-1$. We conclude that there is at most one eigenvalue of $L_0$ that is less than or equal to $-1$, contradicting Liu's result \cite{l16} that, in addition to the eigenfunction $H$ with eigenvalue $-1$, there is another eigenfunction of $L_0$ with eigenvalue strictly smaller than $-1$.

    From here, the result follows in the same way as in the proof of Theorem \ref{thm:fbound1}.
\end{proof}

\section{Two explicit eigenfunctions of the stability operator}\label{sec:eigenfunctions}
In this section, we present an easy but surprising consequence of our formula for $L_k$ in Theorem \ref{thm:lk}. Recall that Liu showed that $L_1$ has an eigenfunction with eigenvalue strictly smaller than $-\frac12$, thereby giving a pair of entropy-decreasing variations of rotationally symmetric self-shrinking tori \cite{l16}. We find that this eigenfunction is simply $\sigma^{-1}$, and its eigenvalue is $-1$.

\begin{theorem}\label{thm:l1eig}
  Let $\Sigma\subset\R^3$ be an immersed rotationally symmetric self-shrinking torus. Then
  \begin{equation*}
    \frac2re^{\abs x^2/4}\cos\theta\qquad\text{and}\qquad\frac2re^{\abs x^2/4}\sin\theta
  \end{equation*}
  are eigenfunctions of the stability operator $L_\Sigma$ with eigenvalue $-1$.
\end{theorem}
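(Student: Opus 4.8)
The plan is to reduce everything to the formula for $L_k$ proved in Theorem \ref{thm:lk} and the Fourier decomposition of Proposition \ref{prop:Lfourierdecomposition}. The first observation is purely notational: since $n=2$ we have $\sigma=\tfrac12 re^{-\abs x^2/4}$, so $\sigma^{-1}=\tfrac2r e^{\abs x^2/4}$, and the two functions in the statement are exactly $\sigma^{-1}\cos\theta$ and $\sigma^{-1}\sin\theta$. Because $\Gamma$ stays away from the axis $r=0$, the weight $\sigma$ is strictly positive and smooth on $\Gamma$, so $\sigma^{-1}\colon\Gamma\to\R$ is a legitimate smooth function to feed into $L_1$.

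Next I would invoke Proposition \ref{prop:Lfourierdecomposition}: writing $f=u\cos\theta$ with $u=\sigma^{-1}$, we have $L_\Sigma f=(L_1 u)\cos\theta$, and similarly with sine. Hence it suffices to show that $\sigma^{-1}$ is an eigenfunction of $L_1$ with eigenvalue $-1$, which, under the sign convention $-L u=\lambda u$, means verifying the identity $L_1\sigma^{-1}=\sigma^{-1}$.

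Now I would apply Theorem \ref{thm:lk} with $k=1$, which gives the clean formula $L_1=\sigma\Delta_\Gamma^\sigma\sigma+1+\frac{1-1}{r^2}=\sigma\Delta_\Gamma^\sigma\sigma+1$, since the $r^{-2}$ term vanishes precisely when $k=1$. Applying this to $\sigma^{-1}$ yields
\begin{equation*}
  L_1\sigma^{-1}=\sigma\,\Delta_\Gamma^\sigma\!\left(\sigma\cdot\sigma^{-1}\right)+\sigma^{-1}=\sigma\,\Delta_\Gamma^\sigma(1)+\sigma^{-1}=\sigma^{-1},
\end{equation*}
because the Laplacian $\Delta_\Gamma^\sigma$ annihilates constant functions. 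This establishes that $L_1\sigma^{-1}=\sigma^{-1}$, so $\sigma^{-1}$ has eigenvalue $-1$ for $L_1$, and therefore $\sigma^{-1}\cos\theta$ and $\sigma^{-1}\sin\theta$ have eigenvalue $-1$ for $L_\Sigma$, as claimed.

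Honestly, there is no serious obstacle here once Theorem \ref{thm:lk} is in hand — the whole content is that conjugating $L_1$ by $\sigma$ turns it into $\Delta_\Gamma^\sigma+1$, which obviously kills $1$ up to the $+1$. The only things to be careful about are (i) tracking the sign convention so that ``eigenvalue $-1$'' corresponds to $L_1\sigma^{-1}=+\sigma^{-1}$ rather than $-\sigma^{-1}$, and (ii) noting that $k=1$ is exactly the value that makes the potential term $\frac{1-k^2}{r^2}$ disappear, so that the constant function is genuinely a null vector of the conformally-rescaled operator. It is also worth remarking in passing that this eigenvalue $-1$ is strictly smaller than the eigenvalue $-\tfrac12$ of the translation eigenfunction $e_r^\perp$, which is the qualitative input Liu used; so this recovers (with an explicit eigenfunction) Liu's existence result for a lower eigenfunction of $L_1$.
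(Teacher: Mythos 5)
Your proposal is correct and is essentially identical to the paper's own proof: both identify $\frac2re^{\abs x^2/4}$ with $\sigma^{-1}$, apply Theorem \ref{thm:lk} with $k=1$ to get $L_1\sigma^{-1}=\sigma\Delta_\Gamma^\sigma(1)+\sigma^{-1}=\sigma^{-1}$, and then use Proposition \ref{prop:Lfourierdecomposition} together with the sign convention $-Lf=\lambda f$ to conclude the eigenvalue is $-1$. Nothing is missing.
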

\begin{proof}
    Note that $\frac2re^{\abs x^2/4}=\sigma^{-1}$. By Theorem \ref{thm:lk},
    \begin{equation*}
      L_1\sigma^{-1}=\sigma\Delta_\Gamma^\sigma(\sigma\sigma^{-1})+\sigma^{-1}=\sigma\Delta_\Gamma^\sigma1+\sigma^{-1}=\sigma^{-1}.
    \end{equation*}
    Thus, by Proposition \ref{prop:Lfourierdecomposition}
    \begin{equation*}
      L_\Sigma\left(\sigma^{-1}\cos\theta\right)=L_1\left(\sigma^{-1}\right)\cos\theta=\sigma^{-1}\cos\theta,
    \end{equation*}
    and likewise for $\sigma^{-1}\sin\theta$.
\end{proof}

We present plots of this variation in Figures \ref{fig:variationQuiver} and \ref{fig:variation3d}.

\begin{figure}
  \centering
  \includegraphics[scale=.8]{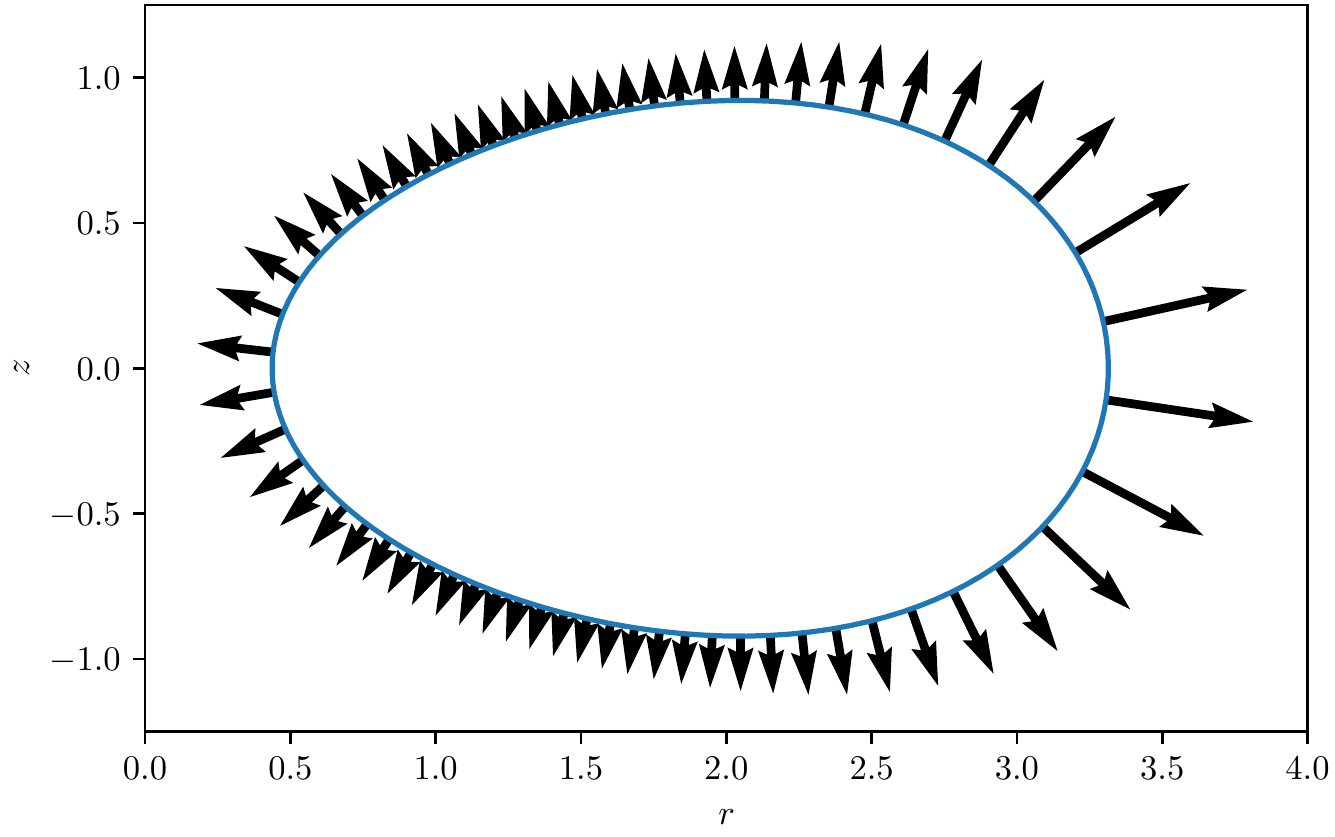}
  \caption{The normal variation of the Angenent torus cross-section given by $u=\sigma^{-1}=\frac2re^{\abs x^2/4}$.}
  \label{fig:variationQuiver}
\end{figure}

\begin{figure}
  \centering
  \includegraphics[trim = 30mm 50mm 30mm 50mm, clip]{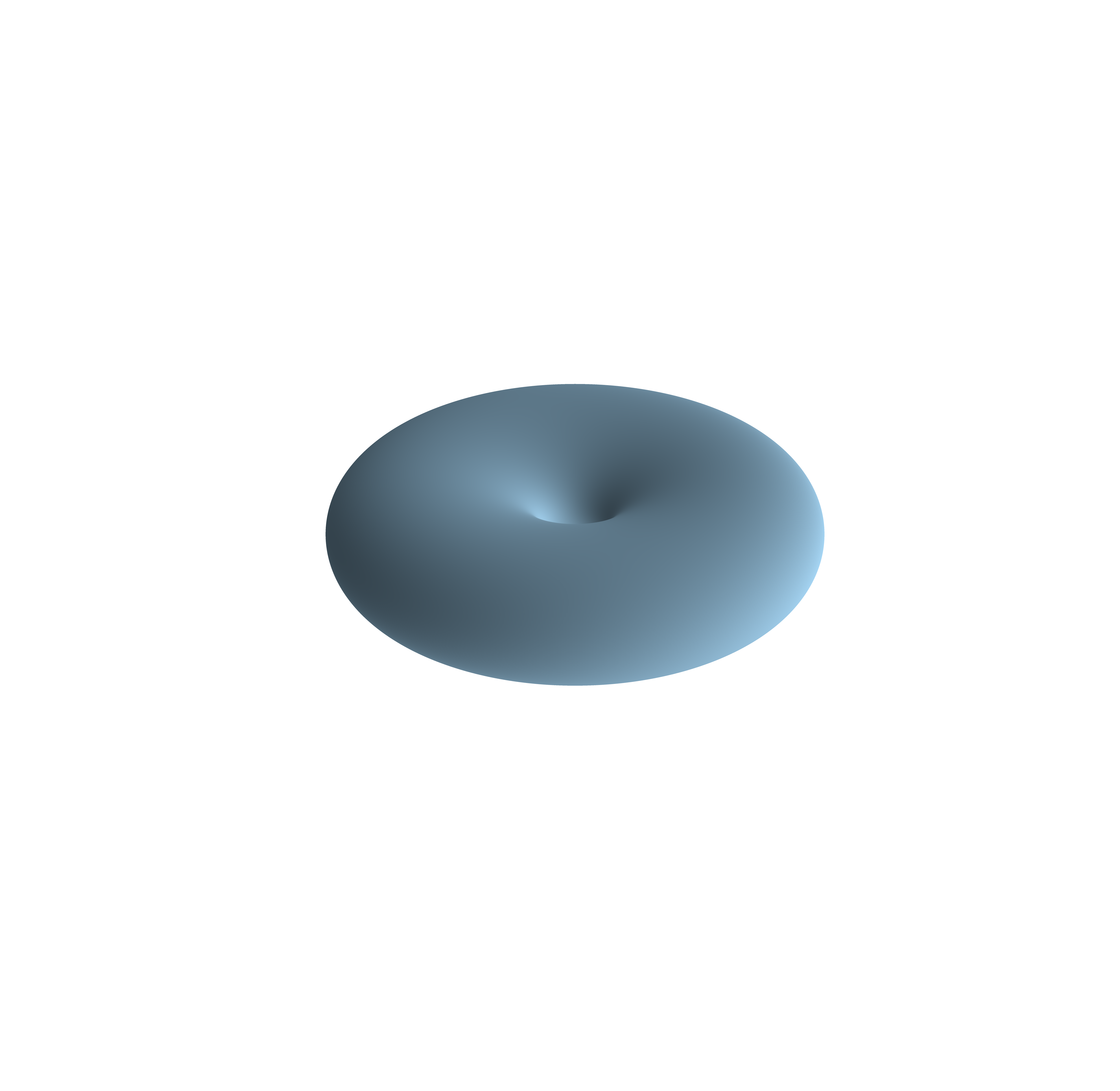}
  \includegraphics[trim = 30mm 50mm 30mm 50mm, clip]{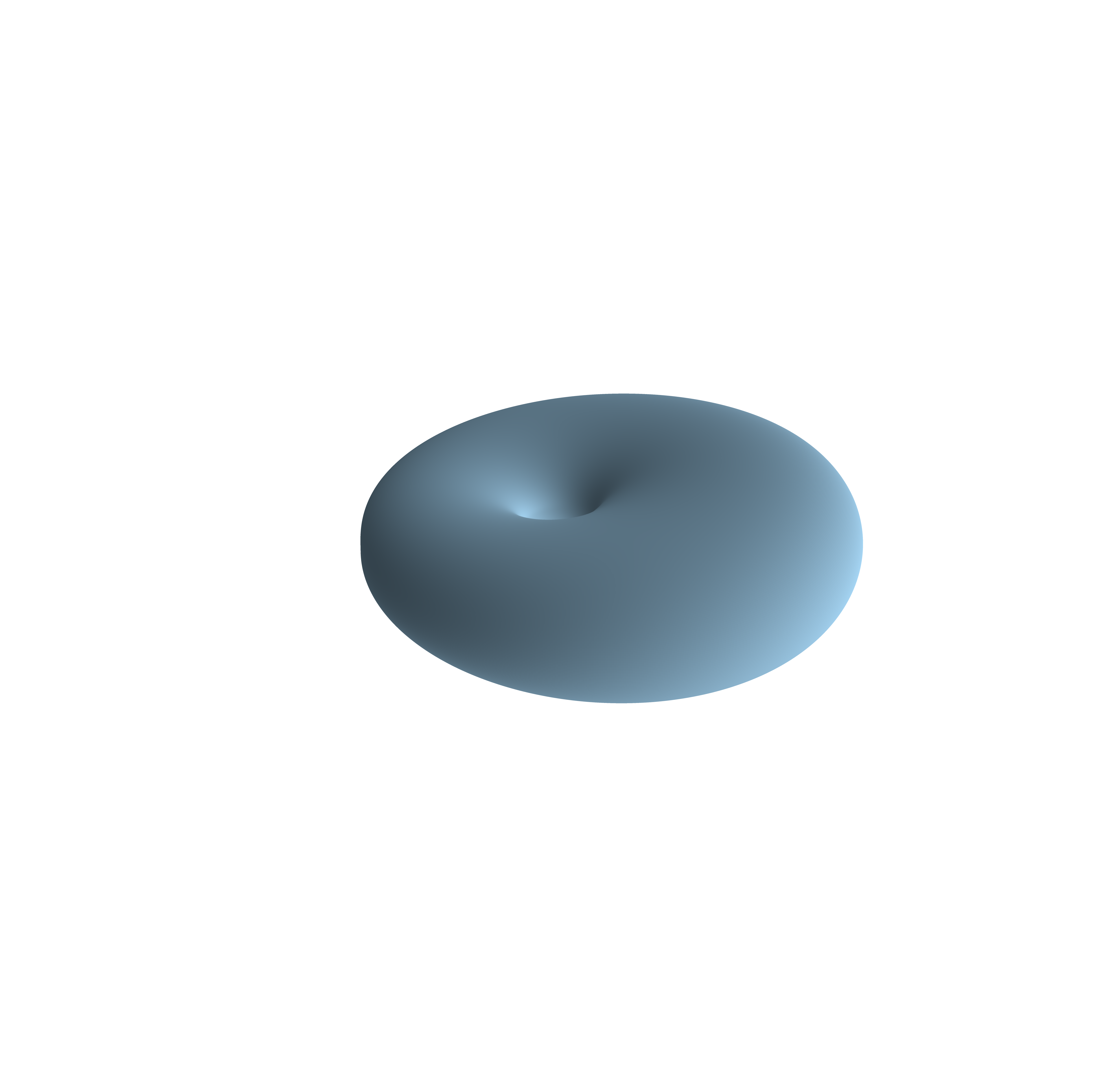}
  \caption{The Angenent torus (top) and the Angenent torus deformed by the normal variation $f=\sigma^{-1}\cos\theta=\frac2re^{\abs x^2/4}\cos\theta$ (bottom).}
  \label{fig:variation3d}
\end{figure}

\section{Preliminary results in higher dimensions}\label{sec:highdim}
In higher dimensions, the situation is more complicated. One of the new features is that the scalar curvature of the cross-section $\Gamma$ plays an essential role. We present some preliminary results that illustrate its impact.

Our main result in this section is Proposition \ref{prop:L1u}, where we compute $L_1\sigma^{-1}$ in general dimension, generalizing Theorem \ref{thm:l1eig}. The ``preliminary'' aspect of this result is that we prove it by direct computation, rather than by generalizing Theorem \ref{thm:lk} using the second variation formula for minimal hypersurfaces with respect to an appropriate conformally changed metric. We also present some consequences of Proposition \ref{prop:L1u} regarding the spectrum of the stability operator $L_\Sigma$.

\begin{proposition}\label{prop:L1u}
  Let $\Gamma$ be the cross-section of an immersed compact $SO(2)$-rotationally symmetric self-shrinking hypersurface $\Sigma\subset\R^{n+1}$ that does not intersect the axis of rotation $r=0$. Let $u\colon\Gamma\to\R$ be defined by
  \begin{equation*}
    u=\sigma^{-1}=\frac Cre^{\abs x^2/4},
  \end{equation*}
  where $C=\frac{(4\pi)^{n/2}}{2\pi}$. Then
  \begin{equation*}
    L_1u=\left(\frac n2-R_\Gamma\right)u,
  \end{equation*}
  where $R_\Gamma$ is the scalar curvature of $\Gamma$.
\end{proposition}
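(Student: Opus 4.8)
The plan is to compute $L_1 u$ directly from Liu's formula for $L_1$ in Definition \ref{def:lk}, specialized to $k=1$ and general $n$. With $k=1$, the potential term simplifies since $\frac{1-k^2}{r^2}=0$, so we must analyze
\begin{equation*}
  L_1 u = \L_\Gamma u + \abs{A_\Gamma}^2 u + \frac{1}{r^2}\lvert e_r^\perp\rvert^2 u + \tfrac12 u,
\end{equation*}
where $u = \sigma^{-1} = \frac{C}{r}e^{\abs x^2/4}$. The first step is to expand the drift Laplacian using the explicit form $\L_\Gamma u = \Delta_\Gamma u + \langle \frac1r e_r - \frac12 x, \grad_\Gamma u\rangle$. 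Since $u = \sigma^{-1}$ and $\L_\Gamma$ is defined via $\sigma$, it is cleanest to note $\L_\Gamma(\sigma^{-1}) = \sigma^{-1}\div_\Gamma(\sigma \grad_\Gamma \sigma^{-1})\cdot\sigma^{-1}\cdot\sigma = $ — more directly, I would compute $\grad_\Gamma u$ and $\Delta_\Gamma u$ by writing $\log u = \log C - \log r + \frac{\abs x^2}{4}$ so that $\grad_\Gamma u = u\,\grad_\Gamma(\log u) = u\left(-\frac{\grad_\Gamma r}{r} + \frac{\grad_\Gamma \abs x^2}{4}\right)$. Here $\grad_\Gamma r = e_r^\top$ and $\grad_\Gamma \abs x^2 = 2x^\top$, so $\grad_\Gamma(\log u) = -\frac{e_r^\top}{r} + \frac{x^\top}{2}$.

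The key computational step is to assemble $\L_\Gamma u$. Using $\L_\Gamma u = u\bigl(\abs{\grad_\Gamma \log u}^2 + \Delta_\Gamma \log u + \langle \frac1r e_r - \frac12 x, \grad_\Gamma \log u\rangle\bigr)$, I would observe that $\langle \frac1r e_r - \frac12 x, \grad_\Gamma \log u\rangle = -\langle \frac1r e_r - \frac12 x, \frac1r e_r^\top - \frac12 x^\top\rangle$, which nearly cancels against $\abs{\grad_\Gamma \log u}^2$ up to the normal components; the leftover involves $\lvert e_r^\perp\rvert^2/r^2$, cross terms $\langle e_r, x\rangle$, and $\lvert x^\perp\rvert^2$. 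Then I need $\Delta_\Gamma \log u = -\Delta_\Gamma \log r + \frac14\Delta_\Gamma \abs x^2$. Computing $\Delta_\Gamma \abs x^2 = 2(n-1) + 2\langle x, \vec H_\Gamma\rangle = 2(n-1) - 2 H_\Gamma x^\perp$ in the hypersurface $\Gamma$ (dimension $n-1$), and computing $\Delta_\Gamma \log r$ requires care since it involves both the intrinsic Laplacian of the function $r$ and a term from $\abs{\grad_\Gamma r}^2$. Substituting the self-shrinker equation for $\Gamma$, namely $H_\Gamma = \frac12 x^\perp - \frac1r e_r^\perp$, should cause massive cancellation. The remaining geometric terms — $\abs{A_\Gamma}^2$ and whatever survives — must be reorganized into the scalar curvature $R_\Gamma$ via the contracted Gauss equation: for a hypersurface in flat $\R^{n-1+1}$... wait, $\Gamma$ sits in the half-space $\R_{\ge0}\times\R^{n-1}$ which is flat and $(n-1)$-dimensional as an ambient-like space only in the $n=2$ case; in general $\Gamma^{n-1}\subset \{(r,z)\}\cong \mathbb{H}\subset\R^n$, so the Gauss equation gives $R_\Gamma = H_\Gamma^2 - \abs{A_\Gamma}^2$ (up to sign conventions).

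The main obstacle I anticipate is bookkeeping: correctly tracking the decomposition of $e_r$ and $x$ into tangential and normal parts to $\Gamma$, keeping the dimension-dependent constants straight (the $\frac n2$ must emerge, presumably from $\frac{n-1}{2} + \frac12$ where $\frac{n-1}{2}$ comes from $\frac14\Delta_\Gamma\abs x^2$'s leading term and $\frac12$ is the explicit constant in $L_1$), and invoking the Gauss equation with the right sign so that the curvature terms collapse to exactly $-R_\Gamma$. A secondary subtlety is computing $\Delta_\Gamma \log r$ and $\Delta_\Gamma r$ correctly, since $r$ restricted to $\Gamma$ is a genuinely nontrivial function whose Hessian trace picks up second fundamental form contributions. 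Once the self-shrinker equation is substituted and the Gauss equation applied, all non-scalar terms should vanish, leaving $L_1 u = \left(\frac n2 - R_\Gamma\right)u$. As a consistency check, setting $n=2$ gives $R_\Gamma = 0$ (a curve has zero scalar curvature) and recovers $L_1 u = u$, i.e. eigenvalue $-1$, matching Theorem \ref{thm:l1eig}.
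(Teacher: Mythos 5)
Your overall strategy is the same as the paper's: compute $L_1\sigma^{-1}$ directly from Definition \ref{def:lk}, expand the drift Laplacian, substitute the self-shrinker equation $H_\Gamma=\frac12x^\perp-\frac1re_r^\perp$, and invoke the Gauss equation $R_\Gamma=H_\Gamma^2-\abs{A_\Gamma}^2$. However, your starting formula for $L_1$ is wrong, and the error is not cosmetic. Definition \ref{def:lk} reads $L_k=\L_\Gamma+\abs{A_\Gamma}^2+\frac1{r^2}\lvert e_r^\perp\rvert^2+\frac12-\frac{k^2}{r^2}$; the potential $1+\frac{1-k^2}{r^2}$ that vanishes in the radial part at $k=1$ belongs to the $n=2$ formula of Theorem \ref{thm:lk}, not to the general definition. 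At $k=1$ you must keep the $-\frac1{r^2}$ coming from $-\frac{k^2}{r^2}$, and it is exactly this term that makes the radial contributions cancel: the drift Laplacian computation produces $\frac1{r^2}\lvert e_r^\top\rvert^2$ (via $-\Delta_\Gamma\log r=-\frac{\Delta_\Gamma r}{r}+\frac{\lvert e_r^\top\rvert^2}{r^2}$), the definition contributes $\frac1{r^2}\lvert e_r^\perp\rvert^2-\frac1{r^2}$, and the sum is $\frac1{r^2}\left(\lvert e_r^\top\rvert^2+\lvert e_r^\perp\rvert^2-1\right)=0$. With your truncated $L_1$ the same computation yields $L_1u=\left(\frac n2+\frac1{r^2}-R_\Gamma\right)u$, which is not the statement.

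A secondary slip: the cancellation between $\abs{\grad_\Gamma\log u}^2$ and the drift term is exact, not ``up to normal components.'' Since $\grad_\Gamma\log u=-\left(\frac1re_r-\frac12x\right)^\top$, and pairing an ambient vector against a tangent vector sees only its tangential part, you get $\left\langle\frac1re_r-\frac12x,\grad_\Gamma\log u\right\rangle=-\abs{\grad_\Gamma\log u}^2$ on the nose, so $\L_\Gamma u=u\,\Delta_\Gamma\log u$ with no leftover involving $\lvert e_r^\perp\rvert^2/r^2$, $\langle e_r,x\rangle$, or $\lvert x^\perp\rvert^2$ from this step; carrying such terms forward would double-count. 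The remainder of your outline --- $\Delta_\Gamma\abs x^2=2(n-1)-2H_\Gamma x^\perp$, the care needed for $\Delta_\Gamma r$ (which equals $-H_\Gamma e_r^\perp$ since $e_r$ is parallel in the half-space), the emergence of $\frac n2$ as $\frac{n-1}2+\frac12$, and the sign of the Gauss equation --- is correct and matches the paper's proof once the two issues above are repaired.
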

\begin{proof}
    Consider a point $p\in\Gamma$ and choose Riemannian normal coordinates about $p$, giving us a coordinate frame $e_1,\dotsc,e_{n-1}$ for $\Gamma$ that is orthonormal at $p$ and satisfies $\nabla_i{e_j}=a_{ij}\n$ at $p$. We compute at an arbitrary point in the coordinate chart that
    \begin{equation*}
      \nabla_iu=C\left(-\frac1{r^2}\langle e_r,e_i\rangle e^{\abs x^2/4}+\frac1re^{\abs x^2/4}\langle \tfrac12x,e_i\rangle\right)=\left\langle\frac12x-\frac1re_r,e_i\right\rangle u.
    \end{equation*}
    Then, at $p$, we compute that
    \begin{equation*}
      \begin{split}
        \L_\Gamma u&=\sigma^{-1}\div_\Gamma(\sigma\grad_\Gamma u)\\
        &=\sigma^{-1}\sum_{i=1}^{n-1}\nabla_i(\sigma\nabla_iu)\\
        &=\sigma^{-1}\sum_{i=1}^{n-1}\nabla_i\left(\sigma\left\langle\frac12x-\frac1re_r,e_i\right\rangle u\right)\\
        &=\sigma^{-1}\sum_{i=1}^{n-1}\nabla_i\left\langle\frac12x-\frac1re_r,e_i\right\rangle\\
        &=u\sum_{i=1}^{n-1}\left(\frac12\langle\nabla_ix,e_i\rangle+\frac1{r^2}\langle e_r,e_i\rangle^2-\frac1r\langle\nabla_ie_r,e_i\rangle+\left\langle\frac12x-\frac1re_r,\nabla_ie_i\right\rangle\right)\\
        &=u\sum_{i=1}^{n-1}\left(\frac12\abs{e_i}^2+\frac1{r^2}\langle e_r,e_i\rangle^2-0+a_{ii}\left\langle\frac12x-\frac1re_r,\n\right\rangle\right)\\
        &=\left(\frac{n-1}2+\frac1{r^2}\lvert e_r^\top\rvert^2-H_\Gamma\left(\frac12x^\perp-\frac1re_r^\perp\right)\right)u.
      \end{split}
    \end{equation*}
    Note that our final expression is coordinate-independent and is thus valid at any point $p$ in $\Gamma$.

    If $\Gamma$ is the cross-section of a self-shrinker, then $H_\Gamma=\frac12x^\perp-\frac1re_r^\perp$. Consequently, we have that
    \begin{equation*}
      \L_\Gamma u=\left(\frac{n-1}2+\frac1{r^2}\lvert e_r^\top\rvert^2-H_\Gamma^2\right)u,
    \end{equation*}
    Therefore, by Definition \ref{def:lk},
    \begin{equation*}
      \begin{split}
        L_1u&=\left(\frac{n-1}2+\frac1{r^2}\lvert e_r^\top\rvert^2-H_\Gamma^2+\abs{A_\Gamma}^2+\frac1{r^2}\lvert e_r^\perp\rvert^2+\frac12-\frac1{r^2}\right)u\\
        &=\left(\frac n2+\frac1{r^2}\left(\lvert e_r^\top\rvert^2+\lvert e_r^\perp\rvert^2-1\right)-\left(H_\Gamma^2-\abs{A_\Gamma}^2\right)\right)u,\\
        &=\left(\frac n2-R_\Gamma\right)u,
      \end{split}
    \end{equation*}
    as desired. The formula $R=H^2-\abs A^2$ follows from the Gauss--Codazzi equations.
\end{proof}

In particular, if $n=2$, then $\Gamma$ is one-dimensional, so $R_\Gamma=0$, and thus we recover the result $L_1u=u$ from Theorem \ref{thm:l1eig}.

Using Proposition \ref{prop:L1u}, we can give bounds on the least eigenvalue of the stability operator.

\begin{proposition}
  Let $\Sigma\subset\R^{n+1}$ be an immersed compact rotationally symmetric self-shrinking hypersurface that does not intersect the axis of rotation $r=0$. Then the least eigenvalue $\lambda_{1,0}$ of the first Fourier component $L_1$ of the stability operator satisfies the inequality
  \begin{equation*}
    \lambda_{1,0}\le-\frac n2+\max_\Gamma R_\Gamma,
  \end{equation*}
  where $R_\Gamma$ denotes the scalar curvature of the cross-section $\Gamma$ of $\Sigma$.
\end{proposition}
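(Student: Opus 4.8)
The plan is to use Proposition \ref{prop:L1u} together with the min-max (Rayleigh quotient) characterization of the least eigenvalue of $L_1$. Recall that, with our sign convention, the least eigenvalue $\lambda_{1,0}$ of $L_1$ satisfies
\begin{equation*}
  \lambda_{1,0}=\min_{u\neq0}\frac{\int_\Gamma u(-L_1)u\,\sigma}{\int_\Gamma u^2\,\sigma},
\end{equation*}
so to prove the upper bound it suffices to exhibit a single test function and estimate its Rayleigh quotient. The natural candidate, suggested by Proposition \ref{prop:L1u}, is precisely $u=\sigma^{-1}=\frac Cre^{\abs x^2/4}$.

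First I would plug in $u=\sigma^{-1}$. By Proposition \ref{prop:L1u}, $L_1u=\left(\frac n2-R_\Gamma\right)u$, so
\begin{equation*}
  \int_\Gamma u(-L_1)u\,\sigma=\int_\Gamma\left(R_\Gamma-\frac n2\right)u^2\,\sigma\le\left(\max_\Gamma R_\Gamma-\frac n2\right)\int_\Gamma u^2\,\sigma,
\end{equation*}
using that $u^2\sigma\ge0$ and bounding $R_\Gamma$ pointwise by its maximum. Dividing by $\int_\Gamma u^2\,\sigma$, which is finite and positive since $\Gamma$ is compact and $u$ is smooth and nonvanishing (as $\Gamma$ avoids $r=0$), gives exactly $\lambda_{1,0}\le-\frac n2+\max_\Gamma R_\Gamma$.

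There is essentially no obstacle here; the only thing to be slightly careful about is justifying that the Rayleigh quotient characterization applies — that is, that $L_1$ (or equivalently $L_1^\sigma$, which has the same spectrum) is a self-adjoint operator with discrete spectrum bounded below, so that a least eigenvalue exists and is given by the infimum of the Rayleigh quotient. This follows because $L_1$ differs from the drift Laplacian $\L_\Gamma$ by a bounded zeroth-order term and $\Gamma$ is compact, exactly as exploited throughout Section \ref{sec:indexbounds}. Given that, the proof is a one-line test-function computation, and the real content is entirely in Proposition \ref{prop:L1u}.
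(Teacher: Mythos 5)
Your proposal is correct and matches the paper's proof essentially verbatim: both use $u=\sigma^{-1}$ as a test function in the Rayleigh quotient for $L_1$, apply Proposition \ref{prop:L1u}, and bound $R_\Gamma$ by its maximum. The additional remark about why the Rayleigh quotient characterization is valid is fine but not something the paper dwells on.
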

\begin{proof}
    Let $u=\sigma^{-1}$. Then by Proposition \ref{prop:L1u},
    \begin{equation*}
      \int_\Gamma u(-L_1)u\,\sigma=\int_\Gamma u\left(-\frac n2+R_\Gamma\right)u\,\sigma
      \le\left(-\frac n2+\max R_\Gamma\right)\int_\Gamma u^2\,\sigma.
    \end{equation*}
    Thus,
    \begin{equation*}
      \lambda_{1,0}=\inf_{v\neq0}\frac{\int_\Gamma v(-L_1)v\,\sigma}{\int_\Gamma v^2\,\sigma}\le-\frac n2+\max R_\Gamma,
    \end{equation*}
    as desired.
\end{proof}

\begin{proposition}
  Let $\Sigma\subset\R^{n+1}$ be an immersed compact rotationally symmetric self-shrinking hypersurface that does not intersect the axis of rotation $r=0$. Then the least eigenvalue $\lambda_0$ of the stability operator $L_\Sigma$ satisfies the inequality
  \begin{equation*}
    \lambda_0\le-\frac n2+\max_\Gamma\left(R_\Gamma-\frac1{r^2}\right),
  \end{equation*}
  where $R_\Gamma$ denotes the scalar curvature of the cross-section $\Gamma$ of $\Sigma$, and $r$ denotes the distance to the axis of rotation.
\end{proposition}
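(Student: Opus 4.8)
The plan is to imitate the previous proposition but to test the stability operator $L_\Sigma$ against a \emph{rotationally symmetric} trial function rather than a first-Fourier-mode one, so that the extra $-\frac{k^2}{r^2}$ term with $k=1$ is removed from $L_1$ and we are left working with $L_0$. The natural candidate is again $u=\sigma^{-1}$, viewed now as a rotationally symmetric function $f$ on $\Sigma$ (equivalently, an eigenfunction candidate for $L_0$). First I would compute $L_0 u$ in terms of the quantity already computed in Proposition \ref{prop:L1u}. Since Definition \ref{def:lk} gives $L_0 = L_1 + \frac{1}{r^2}$ (the $k=0$ term drops the $-\frac{k^2}{r^2}$ that was present for $k=1$), and Proposition \ref{prop:L1u} tells us $L_1 u = \left(\frac n2 - R_\Gamma\right)u$, we obtain immediately
\begin{equation*}
  L_0 u = \left(\frac n2 - R_\Gamma + \frac{1}{r^2}\right)u.
\end{equation*}

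Next I would feed this into the Rayleigh-quotient characterization of the least eigenvalue of $L_\Sigma$. Because $f = u$ (with no $\theta$-dependence) is a genuine function on $\Sigma$, Proposition \ref{prop:Lfourierdecomposition} with $k=0$ gives $L_\Sigma f = (L_0 u)$, and the second variation formula identifies $\int_\Sigma f(-L_\Sigma)f\, e^{-|x|^2/4}$ with a positive multiple of $\int_\Gamma u(-L_0)u\,\sigma$. Substituting the expression for $L_0 u$ and bounding pointwise,
\begin{equation*}
  \int_\Gamma u(-L_0)u\,\sigma = \int_\Gamma \left(-\frac n2 + R_\Gamma - \frac{1}{r^2}\right)u^2\,\sigma \le \left(-\frac n2 + \max_\Gamma\left(R_\Gamma - \frac{1}{r^2}\right)\right)\int_\Gamma u^2\,\sigma,
\end{equation*}
so that
\begin{equation*}
  \lambda_0 = \inf_{v\neq 0}\frac{\int_\Sigma v(-L_\Sigma)v\, e^{-|x|^2/4}}{\int_\Sigma v^2\, e^{-|x|^2/4}} \le -\frac n2 + \max_\Gamma\left(R_\Gamma - \frac{1}{r^2}\right),
\end{equation*}
which is the claimed bound.

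There is essentially no hard step here: the content is entirely in Proposition \ref{prop:L1u}, and this corollary is a two-line consequence once one notices that passing from $L_1$ to $L_0$ subtracts the $-\frac{1}{r^2}$ term and that $\sigma^{-1}$ is an admissible test function for the full operator $L_\Sigma$ (not just for the first Fourier component). The only point requiring a moment's care is the bookkeeping of which Fourier component one is in — making sure that the relevant operator for a rotationally symmetric variation is $L_0$ and that the sign convention ($-L f = \lambda f$) is applied consistently so that an \emph{upper} bound on the Rayleigh quotient yields an \emph{upper} bound on $\lambda_0$. I would also remark, as in the $n=2$ discussion following Proposition \ref{prop:L1u}, that when $\Gamma$ is a curve the scalar curvature vanishes and the bound reduces to $\lambda_0 \le -\frac n2 - \max_\Gamma \frac{1}{r^2} < 0$, recovering consistency with the known eigenvalue $-1$ of $H_\Sigma$.
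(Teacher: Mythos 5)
Your argument is correct and is essentially the paper's proof: both test $u=\sigma^{-1}$ against $L_0=L_1+\frac1{r^2}$ and invoke Proposition \ref{prop:L1u}; the paper first notes that the lowest eigenfunction of $L_\Sigma$ cannot change sign and is therefore rotationally symmetric (so $\lambda_0$ is the least eigenvalue of $L_0$), whereas you work directly with the Rayleigh quotient on $\Sigma$, which is equally valid for an upper bound. One small slip in your closing aside: when $R_\Gamma=0$ the bound reads $\lambda_0\le-\frac n2+\max_\Gamma\left(-\frac1{r^2}\right)=-\frac n2-\frac1{r_{\max}^2}$, not $-\frac n2-\max_\Gamma\frac1{r^2}$.
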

\begin{proof}
    The eigenfunction corresponding to the lowest eigenvalue of $L$ cannot change sign and must therefore be rotationally symmetric. Thus, it is the least eigenfunction of $L_0$, so
    \begin{equation*}
      \lambda_0=\inf_{v\neq0}\frac{\int_\Gamma v(-L_0)v\,\sigma}{\int_\Gamma v^2\,\sigma}.
    \end{equation*}
    Letting $u=\sigma^{-1}$, we compute that
    \begin{multline*}
      \int_\Gamma u(-L_0)u\,\sigma=\int_\Gamma u\left(-L_1-\frac1{r^2}\right)u\,\sigma=\int_\Gamma u\left(-\frac n2+R_\Gamma-\frac1{r^2}\right)u\,\sigma\\
      \le\left(-\frac n2+\max_\Gamma\left(R_\Gamma-\frac1{r^2}\right)\right)\int_\Gamma u^2\,\sigma.\qedhere
    \end{multline*}
\end{proof}

\section{Future work}\label{sec:future}
We end by presenting six possible future directions for this work.

\subsection{General rotationally symmetric surfaces} In this paper, we restricted our attention to tori in order to avoid the places where the metric $g^\sigma$ becomes degenerate, namely the axis of rotation and infinity. However, in principle, we could do a similar analysis for rotationally symmetric immersed spheres, cylinders, and planes. In these contexts, the formula for $L_k$ given in Theorem \ref{thm:lk} remains valid, and the cross-section $\Gamma$ still has finite length with respect to $g^\sigma$. However, rather than being a circle, the cross-section $\Gamma$ is now an interval with boundary either on the axis of rotation or at infinity. Thus, since $\sigma=0$ on the boundary of $\Gamma$, we would now consider eigenvalues of the Laplacian with Dirichlet boundary conditions rather than periodic boundary conditions.

However, the analysis looks like it will be much more delicate than in this paper. For example, if $\Sigma$ is the round sphere, then dilation is given by $f=1$, so $L_0(1)=1$. However, our formula gives $L_0(1)=\sigma\Delta_\Gamma^\sigma\sigma+1+\frac1{r^2}$, with two unbounded terms that cancel.

\subsection{Higher-dimensional hypersurfaces with $SO(2)$ symmetry}\label{subsec:highdim} Liu's work \cite{l16} applies to self-shrinkers with $SO(2)$ rotational symmetry in any dimension. The fact that our $L_1\sigma^{-1}=\sigma^{-1}$ result in Section \ref{sec:eigenfunctions} generalized nicely to higher dimensions in Proposition \ref{prop:L1u} suggests that our index bound results in Section \ref{sec:indexbounds} might generalize as well. Rather than being a geodesic with respect to a conformally changed metric, the cross-section $\Gamma$ would now be a minimal surface, and we can use the second variation formula for minimal hypersurfaces in a Riemannian manifold.

As in Proposition \ref{prop:L1u}, we expect the scalar curvature of the cross-section $\Gamma$ to play an important role in higher dimensions. Another complication is that, in general dimension, the area of a manifold does not determine the spectrum of its Laplacian. We would need to use bounds on the eigenvalues of the Laplacian instead.

\subsection{General self-shrinkers} In this paper, we viewed the cross-section $\Gamma$ as a geodesic, or, more generally, a minimal hypersurface, with respect to a conformally changed metric. However, there is no need to have rotational symmetry to take this perspective. As noted in \cite{cm12}, the self-shrinker $\Sigma$ is itself a minimal surface with respect to an appropriate conformally changed metric $g^\sigma$. Like before, the entropy of $\Sigma$ is simply its area with respect to this metric. (Note that this $\sigma$ is not the same conformal factor as the one in this paper.)

We could then apply the key idea of our paper: Use the second variation formula for minimal surfaces in a Riemannian manifold in order to obtain a new formula for the stability operator $L_\Sigma$ in terms of the intrinsic Laplacian $\Delta_\Sigma^\sigma$. As in the future work described in \ref{subsec:highdim}, we expect new features and challenges to appear, even for surfaces: The scalar curvature $R_\Sigma$ will play a part, and the eigenvalues of $\Delta_\Sigma^\sigma$ will have bounds in terms of the entropy rather than explicit formulas.

We also expect some things to be easier in the general setting. Specifically, we will not need to consider the minimum distance $r_{\min}$ to the axis of rotation, and, indeed, in the general context, the metric $g^\sigma$ is only degenerate at infinity. We expect the role of $r_{\min}$ to instead be played by a lower bound on the scalar curvature $R_\Sigma$, based on the fact that, for a rotationally symmetric self-shrinking torus, the points where $r$ is the smallest are also the points where $R_\Sigma$ is the most negative.

\subsection{Self-shrinkers with more symmetry} Liu's work \cite{l16} is about self-shrinkers with $SO(2)$ rotational symmetry. However, Angenent's work \cite{a92} is about self-shrinking doughnuts in $\R^{n+1}$ with $SO(n)$ rotational symmetry, and recently there has been work on self-shrinking hypersurfaces in $\R^{n+m}$ with birotational $SO(n)\times SO(m)$ symmetry \cite{dln18,m15}. Certainly, Liu's work can be generalized to $SO(2)\times SO(2)$ birotational symmetry, and thus our work can be generalized in this way as well. For the more general problem, one would have to replace Liu's decomposition of the stability operator into its Fourier components with a decomposition into $SO(n)$-representations.

\subsection{Flow lines corresponding to the new eigenfunctions} The new eigenfunctions with eigenvalue $-1$ that we found in Section \ref{sec:eigenfunctions} raise an interesting question. All of the other known eigenfunctions with rational negative eigenvalues are tangent to geometrically meaningful unstable flow lines for rescaled mean curvature flow, namely translations and dilations. Are the flow lines corresponding to the eigenfunctions in Theorem \ref{thm:l1eig} geometrically meaningful as well? Figure \ref{fig:variation3d} suggests that there may be some Möbius-like transformation happening, but it is far from clear whether these flow lines have explicit formulas, and, if so, what they are.

The conjecture \cite{km14} that the Angenent torus is the only \emph{embedded} rotationally symmetric self-shrinking torus is still open. Any such torus, whether embedded or only immersed, has these flow lines, so perhaps understanding them better could help constrain rotationally symmetric self-shrinking tori enough to resolve this conjecture. For recent progress towards this conjecture via compactness theorems, see \cite{m20}.

\subsection{Computing the index numerically} In \cite{bk19}, we numerically computed the entropy of the Angenent torus and proposed a research program for numerically computing its index and for proving bounds on the accuracy of the entropy and index computations. In a recent preprint \cite{bk20}, we computed the index of the Angenent torus to be $5$, excluding translation and dilation. The index upper bound in Section \ref{sec:indexbounds} is an important ingredient for proving the accuracy of this value.

Specifically, this bound shows that we can restrict our search for entropy-decreasing variations to a finite-dimensional space. Moreover, we describe this finite-dimensional space of variations explicitly in the proof of Proposition \ref{prop:maxeigs}, in terms of Fourier modes. In doing so, we have reduced the problem of finding the number of negative eigenvalues of an infinite-dimensional operator to the problem of finding the number of negative eigenvalues of a finite-dimensional matrix. Based on Subsection \ref{subsec:indexbounds}, for the Angenent torus, we need only to compute the number of negative eigenvalues of a $7\times7$ matrix, two $5\times5$ matrices, and one $3\times3$ matrix. Doing so is a trivial task for a computer, so it remains only to show that we computed the entries of these matrices to sufficiently high accuracy.

\bibliographystyle{plain}
\bibliography{meancurvature}

\end{document}